\theoremstyle{plain}
\newtheorem{thm}{Theorem}[section]
\newtheorem{cor}[thm]{Corollary}
\newtheorem{prop}[thm]{Proposition}
\newtheorem{lem}[thm]{Lemma}
\theoremstyle{definition}
\newtheorem{defn}[thm]{Definition}
\newtheorem{aDD^+m}[thm]{ADD^+endum}
\theoremstyle{remark}
\newtheorem{rmk}[thm]{Remark}
\newcommand{\bbC}{\mathbb{C}} 
\newcommand{\bbR}{\mathbb{R}} 
\newcommand*{\defeq}{\mathrel{\vcenter{\baselineskip0.5ex \lineskiplimit0pt
			\hbox{\scriptsize.}\hbox{\scriptsize.}}}%
	=}
\title{Pressure Path 
metrics on parabolic families of polynomials}
\author{Fabrizio Bianchi}
\address{Dipartimento di Matematica, Università di Pisa, Largo Bruno Pontecorvo 5, 56127 Pisa, Italy}
 \email{fabrizio.bianchi$@$unipi.it}
\author{Yan Mary He}
\address{Department of Mathematics\\
	University of Oklahoma\\
	Norman, OK 73019}
\email{he$@$ou.edu}
\date{\today}
\begin{document}

\begin{abstract}
Let $\Lambda$ be a
subfamily of 
the moduli space of degree $D\ge2$ polynomials
defined by a finite number of parabolic relations. 
Let $\Omega$ be a bounded
stable
component of $\Lambda$
with the property that all critical points
are attracted 
by
either the persistent parabolic 
cycles
or by
attracting cycles in $\mathbb C$. 
We construct a 
positive semi-definite
pressure form on $\Omega$ and show that it defines a path metric on $\Omega$. This provides a counterpart
in complex dynamics of 
the pressure metric
on cusped Hitchin components recently studied by Kao and
Bray-Canary-Kao-Martone.
\end{abstract}

\maketitle

\section{Introduction}
Let $S$ be a closed surface of genus at least 2. The Teichm\"uller space $T(S)$ of $S$, which parametrizes the hyperbolic structures on $S$, carries a number of natural metrics defined from different perspectives, e.g., the Teichm\"uller metric, the Weil-Petersson metric, and the Thurston metric; see \cite{Hubbard06,ImaTan}. In \cite{Bridgeman10} and \cite{McMullen08}, Bridgeman and McMullen have respectively shown that the Weil-Petersson metric on $T(S)$ can be reconstructed via thermodynamic formalism.
More precisely, they proved that the Weil-Petersson metric is a constant multiple of the so-called {\it pressure metric}. 

From the perspective of Sullivan's dictionary \cite{Sullivan85},
the space of degree $D\ge 2$ Blaschke products $\mathcal{B}_D$ can be viewed as a
counterpart of the Teichm\"uller space $T(S)$ in complex dynamics; see for instance
\cite{Luo_GeoFiniteDegenI,McM_CompExpCircle,McM_RibbonRtreeHoloDyn,McM10}.
In \cite{McMullen08} McMullen,
 using thermodynamic formalism, 
introduced a counterpart of the Weil-Petersson metric on $\mathcal B_D$,
whose construction is analogous to that of the pressure metric on the Teichm\"uller space $T(S)$.
Nie and the second author constructed pressure metrics on
certain
hyperbolic components in the moduli space of degree $D\ge2$ rational maps \cite{HeNie23} and polynomial shift loci \cite{HeNie23SL}. 

If $S$ is a {\it punctured}
surface of negative Euler characteristic, the Teichm\"uller space $T(S)$ of $S$ parametrizes \emph{complete}
hyperbolic structures on $S$. Denote by $\pi_1S$ the fundamental group of $S$. A hyperbolic structure on $S$ can be identified with a discrete faithful representation $\rho : \pi_1S \to {\rm PSL}(2,\mathbb R)$ such that the element $g \in \pi_1S$ representing a puncture is mapped to a {\it parabolic} matrix, i.e.,
whose trace squares to 4.
Kao \cite{Kao20} constructed pressure metrics on such Teichm\"uller spaces.
In higher Teichm\"uller theory, Bray-Canary-Kao-Martone \cite{BCKM23} have studied pressure metrics for {\it cusped Hitchin components}, which are 
generalizations of Teichm\"uller spaces of punctured surfaces. These metrics are expected
to be the induced metric on the strata at infinity of the metric completion of the Hitchin component of a closed surface with its pressure metric \cite{BCKM23, Masur76}.

In this paper, as a natural
counterpart of cusped Hitchin components in complex dynamics, we consider 
\emph{$\Lambda$-hyperbolic
components} 
(see below)
$\Omega$ in an algebraic family $\Lambda$
of conjugacy classes of degree $D\ge 2$ 
polynomials or rational maps
defined by a finite number of parabolic relations.
We construct a 
positive semi-definite
pressure form on $\Omega$ and show that it defines a path metric on $\Omega$ whenever
$\Lambda$ is polynomial and $\Omega\subset \Lambda$ is bounded.

\subsection{Statement of results}
Let $D \ge 2$ be an integer. We denote by ${\rm rat}^{cm}_D$ (resp. ${\rm poly}^{cm}_D$) the moduli space of degree $D$ rational maps (resp. polynomials) with marked critical points $c_1,\dots,c_{2D-2}$ (resp. $c_1, \dots, c_{D-1}$), i.e., the space of M\"obius conjugacy classes of degree $D$ rational maps (resp. polynomials) whose critical points are marked.
Let $\Lambda$ be an algebraic subfamily of ${\rm rat}^{cm}_D$ (resp. ${\rm poly}^{cm}_D$) with the property 
that a (possibly empty) subset of the critical points are 
persistently attracted by a parabolic periodic point
(this property is invariant by the action of M\"obius transformations and therefore well-defined on the quotient).
We call such $\Lambda$ a \emph{parabolic subfamily} of ${\rm rat}^{cm}_D$ (resp. ${\rm poly}^{cm}_D$).
For every $\lambda\in \Lambda$,
we denote by $f_\lambda$ a rational map (resp. polynomial) in the corresponding conjugacy class. 
Up to renumbering, we denote by $c_1,\dots, c_m$ the critical points which are active on $\Lambda$.
Recall that a critical point $c$ is {\it passive} on an open subset
$\Lambda_0\subseteq \Lambda$ 
if the sequence of holomorphic functions
$\{\lambda \mapsto f_\lambda^n(c(\lambda))\}_{n\ge 1}$ is a normal family on $\Lambda_0$. Otherwise, the critical point is {\it active}. 

Let $\Omega$ be a stable component in $\Lambda$; that is, all critical points are passive on $\Omega$, which in this case is equivalent to asking that
none of the possibly active critical points $c_1, \dots, c_m$ is active on
$\Omega$.
We say that a stable component $\Omega$ is {\it $\Lambda$-hyperbolic} 
if,
for every $\lambda \in \Omega$, all the 
critical points $c_1(\lambda), \ldots, c_m(\lambda)$ 
are contained in the basin of some attracting cycle of
$f_\lambda$.
$\Lambda$-hyperbolic components 
are 
the
natural generalization
of the hyperbolic components
of ${\rm rat}^{cm}_D$ or ${\rm poly}_D^{cm}$
(which correspond to the case where $m=2D-2$ and $m=D-1$, respectively).
Bounded 
$\Lambda$-hyperbolic 
components 
(i.e., those satisfying $\Omega \Subset \Lambda$)
in ${\rm poly}_D^{cm}$
are the analogue of hyperbolic components for which all critical points are attracted to
attracting cycles in $\mathbb C$, which are the hyperbolic components in the connectedness locus.

We 
first show 
that the construction of the positive semi-definite
symmetric bilinear form $\langle \cdot, \cdot \rangle_G$ in \cite{HeNie23} can be extended to any $\Lambda$-hyperbolic stable component $\Omega$, 
where $\Lambda$ is a parabolic subfamily of 
${\rm poly}^{cm}_D$
or
${\rm rat}^{cm}_D$. Moreover, $\langle \cdot, \cdot \rangle_G$ is conformal equivalent to the pressure (pseudo) metric,
which is constructed on $\Omega$ in a similar way as McMullen \cite{McMullen08}; see Section \ref{sec_strategy_pf} for more details.

A priori, the
2-form $\langle \cdot, \cdot \rangle_G$ is only positive semi-definite. In \cite{HeNie23}, the authors gave a condition on a hyperbolic component in the moduli space of degree $D$ rational maps under which the 2-form is non-degenerate.
To obtain this,
they used a deep result of Oh-Winter \cite[Theorem 1.1]{Oh17} on the asymptotic distribution of repelling multipliers
for hyperbolic rational maps.
Since our stable $\Lambda$-hyperbolic 
components are more general than hyperbolic components,  
this result 
is unavailable in our setting.
However, we show that $\langle \cdot, \cdot \rangle_G$ still
defines a metric
on every bounded
$\Lambda$-hyperbolic component $\Omega$ of a parabolic subfamily $\Lambda$ of ${\rm poly}^{cm}_D$.
The following theorem is our main result.

\begin{thm}\label{thm_main}
Let $\Lambda$ be a
parabolic
subfamily of ${\rm poly}^{cm}_D$ and
$\Omega\Subset \Lambda$
a bounded
$\Lambda$-hyperbolic component.
Then the $2$-form $\langle \cdot, \cdot \rangle_G$
defines a metric on $\Omega$.
\end{thm}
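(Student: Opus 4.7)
The plan is to identify the null directions of the form $\langle \cdot,\cdot\rangle_G$ with first-order variations of the multiplier spectrum along repelling cycles, and then to rule out non-trivial null directions using rigidity tailored to bounded polynomial families with persistent parabolic cycles.

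First, I would establish the cohomological characterization of degeneracy. By the thermodynamic formalism adapted to the parabolic setting (inducing schemes of Young/Kao/Bray--Canary--Kao--Martone type), for a tangent vector $v\in T_{\lambda_0}\Omega$ the pairing $\langle v,v\rangle_G$ vanishes if and only if the derivative in the direction $v$ of the geometric potential $\varphi_\lambda(z)=-\log|f'_\lambda(z)|$ is cohomologous to a constant with respect to the induced expanding dynamics on a suitable hyperbolic subset of the Julia set. Applying this to periodic orbits then yields that for every repelling periodic point $p$ the directional derivative $\partial_v\log|m_p(\lambda_0)|$ equals a single constant $c_v$ depending only on $v$.

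Next, I would use the hypotheses (parabolic subfamily, bounded polynomial component) to force $c_v=0$ and upgrade the conclusion from $|m_p|$ to $m_p$. The persistent parabolic cycles have multiplier identically a fixed root of unity on $\Lambda$, so their logarithmic derivatives vanish along every $v\in T_{\lambda_0}\Lambda$; combined with the fact that the Green function of the filled Julia set is a well-defined pluriharmonic quantity on $\Omega$ (using $\Omega\Subset\Lambda$ so nothing escapes to infinity), this pins down the normalization and removes $c_v$. The holomorphy of $m_p$ in $\lambda$ then upgrades the vanishing of $\partial_v\log|m_p|$ to the vanishing of $\partial_v m_p$ itself.

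The main step, and the expected obstacle, is then to conclude from $\partial_v m_p(\lambda_0)=0$ for all repelling $p$ that $v=0$. Pick a holomorphic disk $\phi\colon\mathbb{D}\to\Omega$ with $\phi'(0)=v$; by $J$-stability the repelling periodic points move holomorphically along $\phi$ and each $m_p\circ\phi$ is a holomorphic function on $\mathbb{D}$ with a critical point at $0$. The hypothesis $\Omega\Subset\Lambda$ provides normal family control: the filled Julia sets stay in a fixed compact subset of $\mathbb{C}$, and the induced conformal structures on the expanding parts are uniformly tame. One can then invoke a Levin--Przytycki/Zdunik-style rigidity: within a bounded stable polynomial component, a $1$-parameter family of maps whose repelling multiplier spectrum is infinitesimally constant must be infinitesimally trivial up to conformal conjugacy, hence $v=0$ in $T_{\lambda_0}\Omega$.

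The hard part is this last rigidity step, which in \cite{HeNie23} was supplied by the Oh--Winter equidistribution of repelling multipliers. That tool is unavailable here because the persistent parabolic cycles prevent classical hyperbolicity on the whole Julia set. My expectation is that it can be replaced, in the bounded polynomial case, by a direct argument combining (i) the inducing scheme on the expanding subset, which provides enough Markov structure for a Livsic-type cohomological argument, and (ii) the compactness of the filled Julia sets along $\phi$, which lets one promote the infinitesimal coboundary condition to a genuine holomorphic conjugacy and hence to $v=0$.
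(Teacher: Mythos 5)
Your proposal has a genuine gap, and in fact two of its intermediate steps are already problematic before you reach the step you yourself flag as unproven. First, the cohomological characterization of a null direction is mis-stated for this normalization: because the potential is $-\delta_\eta(\lambda)\log|f'_\lambda\circ\Psi_\lambda|$ (a \emph{multiplicative} normalization by the Bowen number, not an additive one), vanishing of $\langle v,v\rangle_G$ gives that the $v$-derivative of $\log|f'|$ is cohomologous to a constant multiple of $\log|f'|$ itself; on an $n$-periodic repelling point this reads $\partial_v\log|m_p|=K\,\log|m_p|$ with $K$ proportional to $\partial_v\delta_\eta$, not $\partial_v\log|m_p|=c_v$ with $c_v$ independent of $p$ (see Lemma \ref{lem_K_equation}). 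Consequently your trick of evaluating at the persistent parabolic cycle to force the constant to vanish gives nothing: there $\log|m_p|=0$ on both sides, so the relation is vacuous. Second, the ``upgrade'' from $\partial_v\log|m_p|=0$ to $\partial_v m_p=0$ is unjustified: the form is a real bilinear form and a null vector comes one real direction at a time, so you only control the real part of the logarithmic derivative of $m_p$ in the single direction $v$; holomorphy of $m_p$ does not let you conclude that the directional derivative of $m_p$ vanishes (you would also need the direction $iv$ to be null).

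Third, and most importantly, your core rigidity step (``a bounded stable polynomial family whose repelling multiplier spectrum is infinitesimally constant is infinitesimally trivial, hence $v=0$'') is exactly the missing ingredient, and it is asserted as an expectation rather than proved; note it would establish positive \emph{definiteness}, which is stronger than the theorem and which the paper deliberately does not attempt. The paper's route is different: it shows that every nonconstant $C^1$ path has strictly positive length (Proposition \ref{p:non-deg-analytic-paths}) by combining (i) the degeneracy relation above integrated along the path, (ii) $L(\lambda)\equiv\log D$ on bounded polynomial components via Przytycki's formula (Lemma \ref{lem_homology}), and (iii) equidistribution of repelling cycles with respect to the measure of maximal entropy, which forces the factor $e^{\widetilde K}$ to be trivial and hence the moduli of \emph{all} repelling multipliers to be constant along the path; this contradicts the Ji--Xie finiteness theorem for maps sharing the moduli of their multiplier spectrum (or, alternatively, a harmonic-function/maximum-principle argument). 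Since pointwise degenerate directions are not excluded, the paper then needs a second layer you do not address at all: real-analyticity of the form on the unit tangent bundle (Proposition \ref{prop_metric_an}, Corollary \ref{c:metric-analytic}), Mityagin's theorem on zero sets of analytic functions, and a stratification/induction argument reducing $d_G(x,y)>0$ to the positivity of lengths of $C^1$ paths inside analytic submanifolds. Without a proof of your rigidity claim, and without this reduction from the pseudo-distance to path lengths, the proposal does not establish the theorem.
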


Before we move on to discuss the ideas of the proofs, we mention related works on pressure metrics in various contexts in geometry and dynamics. In Teichm\"uller theory, pressure metrics have been studied for quasi-Fuchsian spaces of closed surfaces \cite{Bridgeman10,Bridgeman08}, Teichm\"uller spaces and quasi-Fuchsian spaces of punctured surfaces \cite{BCK23,Kao20}, and Teichm\"uller spaces of bordered surfaces \cite{Xu19}. In higher Teichm\"uller theory, pressure metrics have been studied for deformation spaces of Anosov representations \cite{BCLS15} and cusped Hitchin components \cite{BCKM23}. Pressure metrics have also been defined on the moduli space of metric graphs \cite{Kao17, PollicottSharp14} and on Culler-Vogtmann outer spaces \cite{Aougab23}. Ivrii \cite{Ivrii14} studied the metric completion of $\mathcal{B}_2$ with respect to McMullen's metric. Lee, Park, and the second author \cite{HLP23} studied the pressure metric
on the space of degree $D\ge2$ quasi-Blaschke products.

Pressure metrics are not inherently non-degenerate, 
i.e., they are not necessarily \emph{Riemannian} metrics. Indeed,
the construction of pressure metrics ensures only 
the positive semi-definiteness of the associated $2$-form. 
Although pressure metrics are known to be positive definite in most cases, 
degeneracy loci are known to exist
is several settings;
see \cite{Bridgeman10,HLP23}.

\subsection{Strategy of the proof} \label{sec_strategy_pf}
In \cite{HeNie23}, inspired by the works of Bridgeman \cite{Bridgeman10} and McMullen \cite{McMullen08}, the authors constructed a symmetric bilinear form $\langle \cdot, \cdot \rangle_G$ on any hyperbolic component 
in the moduli space 
of degree $D \ge 2$ rational maps.

Our first goal
in the present
paper is to show that their construction, with suitable modifications,
extends
to any $\Lambda$-hyperbolic component of a parabolic subfamily $\Lambda$ of ${\rm rat}^{cm}_D$
(or ${\rm poly}^{cm}_D$).
Let $\Omega$ be a $\Lambda$-hyperbolic component and
$\widetilde \Omega$
a lift of $\Omega$ in the parameter space ${\rm Rat}^{cm}_D$. Given $\lambda_0 \in \widetilde{\Omega}$,
denote by $J_{\lambda_0}$ the Julia set of $f_{\lambda_0}$. As $\Omega$ is a stable component, there exists a holomorphic motion for the Julia 
sets
as the parameter moves
in $\widetilde \Omega$. In particular, 
every
$\lambda_0 \in \widetilde \Omega$ admits
a neighborhood $U(\lambda_0)$ such that a H\"older-continuous conjugacy
$\Psi_\lambda : J_{\lambda_0} \to J_{\lambda}$ is well-defined for every $\lambda \in U(\lambda_0)$. 
We also observe that $\log|f_\lambda'| : J_\lambda \to \mathbb R$ is H\"older continuous for every $\lambda \in U(\lambda_0)$.

\medskip

Fix
$\eta \in (0, \log D)$. It is not difficult to see that there exists 
a unique number $\delta_\eta (\lambda)=\delta_\eta(f_\lambda)$ such that
the {\it pressure} $\mathcal{P}$ of the H\"older continuous function $-\delta_\eta(\lambda)\log|f_\lambda'| : J_\lambda \to \mathbb R$ satisfies 
\begin{equation*}
\mathcal{P}(-\delta_\eta(\lambda)\log|f_\lambda'|) = \eta.
\end{equation*}
We call $\delta_\eta(\lambda)$ 
the {\it Bowen
number\footnote{We avoid the name \emph{Bowen parameter} 
to avoid confusion, since, in this paper,  \emph{parameters} will mostly 
refer to elements in the space of the maps.}}
as a celebrated result of Bowen \cite{Bowen79} states that,
for a hyperbolic rational map $f$,
the Hausdorff dimension $\delta(f)$ of $J(f)$ satisfies $\mathcal{P}(-\delta(f)\log|f'|) = 0$.
We first prove (see Proposition \ref{prop_deltaanalytic})
that the
function $\delta_\eta : \widetilde{\Omega} \to \mathbb R$, sending $\lambda$ to $\delta_\eta(\lambda)$, is real-analytic,
which 
generalizes
\cite{Ruelle82} 
to our 
setting. To this end, we adapt the methods described
in \cite{SU10,UZ04real} in the context of hyperbolic semi-groups
of rational maps and of exponential maps respectively. 
The spectral gap property for the transfer operators associated to the weights $\delta_\eta(\lambda)\log|f_\lambda'|$ and their perturbations, 
as recently established in
\cite{BD23eq1,BD24eq2},
allows us to deal with the non-uniform hyperbolicity due to the presence of parabolic points and to
adapt
those arguments in our context. 
Observe that the condition on $\eta$ is required to apply these results, as the transfer operator does not have a unique isolated eigenvalue of multiplicity $1$ for $\eta=0$ as soon as a map has a parabolic periodic point.

\medskip

Once the analyticity of the map 
$\delta_\eta : \widetilde \Omega \to \mathbb R$
is established,
we
consider the map $G_{\lambda_0}: U(\lambda_0) \to \mathbb R$ given by $$G_{\lambda_0}(\lambda) \defeq \delta_\eta(\lambda){\rm Ly}_{\lambda_0}(\lambda),$$
where ${\rm Ly}_{\lambda_0} : U(\lambda_0) \to \mathbb R$ is given by
${\rm Ly}_{\lambda_0}(\lambda) \defeq \int_{J_{\lambda_0}} \log |(f_\lambda \circ \Psi_{\lambda})'(z)| d\nu(z)$ 
and
$\nu$ is the unique equilibrium state of the H\"older continuous function $-\delta_\eta(\lambda_0)\log|f_{\lambda_0}'| : J_{\lambda_0} \to \mathbb R$. As 
the function
${\rm Ly}_{\lambda_0}$ is real-analytic
by standard arguments, 
 so is $G_{\lambda_0}$. Moreover, we show that $G_{\lambda_0}$ attains a minimum at $\lambda_0$; see Section \ref{sec:2-form-1}. It follows that the Hessian $G_{\lambda_0}''(\lambda_0) : T_{\lambda_0}\widetilde{\Omega} \times T_{\lambda_0}\widetilde{\Omega} \to \mathbb R$ of $G_{\lambda_0}$ at $\lambda_0$ gives
 a 
 positive
 semi-definite
 symmetric bilinear form
on the tangent space $T_{\lambda_0}\widetilde{\Omega}$, that we denote by
$\langle \cdot, \cdot \rangle_G$ and call 
the {\it Hessian form}.
This form descends to $\Omega$; see Section \ref{sec:2-form-2}. We use the same notation for the induced 
 form, which is still positive semi-definite.

\medskip

We prove in Section \ref{sec_conformal_equiv}
that $\langle \cdot, \cdot \rangle_G$ is conformal equivalent to the {\it pressure (pseudo) metric} 
on $\Omega$,
see
Section \ref{ss:P-omega} for the definition and details.
The pressure metric was first considered by \cite{Bridgeman10,McMullen08} in the context of (quasi-)Fuchsian spaces and the moduli space of Blaschke products. 
Thanks to this equivalence, we can show
that a key 
necessary condition for the degeneration of the form
$\langle \cdot, \cdot \rangle_G$ at a given $[\lambda_0]\in \Omega$ and $\vec{v}\in T_{[\lambda_0]} \Omega$, proved in \cite{HeNie23} in the hyperbolic case, still holds in our context;
see Lemma \ref{lem_K_equation}. Namely,
if $[\lambda_0] \in \Omega$ and $\vec{v}\in T_{[\lambda_0]}\Omega$
are such that $\langle\vec v, \vec v\rangle_G=0$,
and we assume for simplicity that $\Omega$ is an open set of some $\mathbb C^N$,
we must have
\begin{equation}\label{e:intro-K}
\frac{d}{dt}\Big|_{t=0} 
S_n\big(
\log|f'_{\lambda_0 + t\vec v} \circ \Psi_{t}(x)|\big)
= K \cdot S_n
\big(
\log|f'_{\lambda_0} 
(x)|\big)
\end{equation} 
for all 
$n$-periodic
points
$x$  in the Julia set
of $f_{\lambda_0}$ and some constant $K$ independent of $n$ and $x$.
Here, $S_n(\cdot)$ denotes the $n$-th Birkhoff sum and $\Psi_t$ is the conjugation between the Julia sets of $f_{\lambda_0}$
and $f_{\lambda_0 + t\vec {v}}$
induced by the holomorphic motion.
Establishing \eqref{e:intro-K} relies on the equivalence between the Hessian and the pressure forms mentioned above and on the fact that any
coboundary is automatically continuous. Both these ingredients rely on the spectral gap from \cite{BD23eq1,BD24eq2}.
The condition \eqref{e:intro-K} is shown to be too rigid to hold in the hyperbolic case (up to some special exceptions)
in \cite{HeNie23}, by exploiting the above
mentioned result by Oh-Winter
\cite{Oh17}. 
Therefore, in that case,
the form is positive definite, and hence it defines a Riemannian metric on all but finitely many hyperbolic components in ${\rm rat}_D^{cm}$ or ${\rm poly}_D^{cm}$.

\medskip

In the present case,
in order to prove Theorem \ref{thm_main}, we directly
show that the function $d_G: \Omega \times \Omega \to \mathbb R$ given by
\begin{equation*}
d_G(x,y) \defeq \inf_\gamma \ell (\gamma),
\quad \mbox{ where } \quad
\ell (\gamma)\defeq\int_0^1 \sqrt{\langle \gamma'(t),\gamma'(t) \rangle_G} dt,
\end{equation*}
is a distance function, which implies that $\langle \cdot, \cdot \rangle_G$ defines a path metric on $\Omega$.
Here the infimum is taken over all the $C^1$-paths 
$\gamma$
connecting $x$ to $y$ in $\Omega$.
A priori, as 
$\langle \cdot, \cdot \rangle_G$ is only positive semi-definite,
the function $d_G$ is only 
a pseudo-metric. To show
that $d_G(x,y) > 0$ for any $x\neq y$ in $\Omega$, we first show that $\langle \cdot, \cdot \rangle_G$ is real-analytic on the unit tangent bundle $UT\Omega$ of $\Omega$. In particular, we prove that the pressure function
associated to
a 4-parameter family of potentials is
jointly
real analytic in 
all
the
parameters; see Proposition \ref{prop_metric_an}. The proof shares 
a
similar framework as that of Proposition \ref{prop_deltaanalytic} and again relies
on
the spectral gap property of the transfer operators and their perturbations.
Thanks to the analyticity of 
$\langle\cdot, \cdot\rangle_G$, an analysis of the possible singular sets for this form shows that,
as soon as $d_G (x,y)=0$, there exists
a
$C^1$ 
path $\gamma$
joining them with $\ell(\gamma)=0$; see Section \ref{sec_Pf_Thm}. 

\medskip

We then proceed to show that any $C^1$ 
path $\gamma:[0,1] \to \Omega$ in $\Omega$ has strictly positive length.
This is the only point in the paper where we use
that $\Lambda$ is 
a polynomial family
and $\Omega$ is a {\it bounded} $\Lambda$-hyperbolic component. Namely, we combine
the fact that the Lyapunov exponent of the measure of maximal entropy
(which describes the asymptotic value of the multipliers of the periodic points, thanks to their equidistribution
with respect to the measure of maximal entropy \cite{Lyubich82, Lyu83entropy})
is constant on $\Omega$ with the fact that \eqref{e:intro-K} should hold true at every point of a possible path with zero length, taking $\vec v$ to be the tangent vector to the path.
This forces the absolute values of the multipliers of all repelling periodic points to have a common level set, something that leads to a contradiction.

\subsection{Organization of the paper}
The paper is organized as follows. In Section \ref{sec_prelim} we 
present the preliminary results that we will need, in particular
those related to the thermodynamic formalism and the spectral properties of the transfer operators.
In Section \ref{sec_AnalyticityBowenParameter}, we prove the analyticity of the Bowen
function $\delta_\eta$. 
In Section \ref{sec_2form},
we construct the symmetric bilinear form $\langle \cdot, \cdot \rangle_G$ on $\Omega$ and show that it is conformal equivalent to the pressure form.
We prove Theorem \ref{thm_main} in Section \ref{sec_pathmetric}.

\subsection*{Acknowledgements}
The authors would like to thank the University of Pisa and the University
of Oklahoma for the warm welcome and for the excellent work conditions.
This project has received funding from
 the
 Programme
 Investissement d'Avenir
(ANR QuaSiDy /ANR-21-CE40-0016,
ANR PADAWAN /ANR-21-CE40-0012-01),
 from the MIUR Excellence Department Project awarded to the Department of Mathematics of the University of Pisa, CUP I57G22000700001,
 and from the PRIN 2022 project MUR 2022AP8HZ9{\_}001.
The first
author is affiliated to the GNSAGA group of INdAM.

\section{Preliminaries} \label{sec_prelim}
In this section, we collect basic definitions in Section \ref{sec_redLya}. In Section \ref{ss:operators} we adapt the main
results of \cite{BD23eq1,BD24eq2}
about the spectral properties 
of transfer operators and their perturbations
to our setting.
In Section \ref{sec_app_log_f'}, we deduce basic properties of the geometric potential.
Finally, we discuss extensions of analytic functions in Section \ref{ss:extension-analytic}.

\subsection{Notations and definitions} \label{sec_redLya}
Let $D\ge 2$ be an integer. Let ${\rm Rat}^{cm}_D$ (resp. ${\rm Poly}^{cm}_D$) be the space of degree $D$ rational maps (resp. polynomials) with marked critical points $c_1,
\ldots, c_{2D-2}$ (resp. $c_1, \ldots, c_{D-1}$) and ${\rm rat}^{cm}_D$ (resp. ${\rm poly}^{cm}_D$)
the moduli space obtained from ${\rm Rat}^{cm}_D$ (resp.
${\rm Poly}^{cm}_D$) by taking the quotient by all M\"obius 
(resp. affine)
conjugacies. For every 
$\lambda \in {\rm Rat}^{cm}_D$ (resp. ${\rm Poly}^{cm}_D$), we denote by $f_\lambda$ the corresponding 
rational map (resp. polynomial),
by $\mu_\lambda$ its unique measure of maximal entropy \cite{FLM83,Lyubich82,Lyu83entropy}, and by $L(\lambda) = \int \log |f'_\lambda|\mu_\lambda$ the Lyapunov exponent of $\mu_\lambda$.

\medskip

Let $\Lambda$ be an algebraic subfamily of ${\rm Rat}^{cm}_D$ (resp. ${\rm Poly}^{cm}_D$) with the property 
that a (possibly empty) subset of the critical points are 
persistently attracted by a parabolic periodic point.
We say that such $\Lambda$ is a
\emph{parabolic subfamily} of ${\rm Rat}^{cm}_D$ (resp. ${\rm Poly}^{cm}_D$).
Examples of these subfamilies are the families ${\rm  Per}_{n} (\eta)$ for $n\in \mathbb N$ and $\eta$ root of unity, which are defined as
\[
{\rm Per}_{n} (\eta) \defeq
\{ 
\lambda \colon \exists x \colon f_\lambda^n(x)=x; f'_\lambda(x)=\eta\}.
\]
We refer to \cite{Berteloot13, Milnor93geometry, Silverman07} 
for the description and the geometry of these algebraic submanifolds, and to \cite{BB09,BB11,Dujardin14,Gauthier16} 
for their distribution in the parameter space and their role in the dynamical understanding of the bifurcation phenomena.
Every parabolic subfamily is then a finite intersection
of such families (where ${\rm Rat}_D^{cm}$ or ${\rm Poly}_D^{cm}$ is thought of as the empty intersection). It is clear that the quotient by 
  M\"obius or
 affine conjugacies is well-defined on parabolic subfamilies, and we will use the same name for the images of these families by the quotient map.
By a slight abuse of notation, we can think of parabolic subfamilies of ${\rm Poly}_D^{cm}$ (resp. ${\rm poly}_D^{cm}$) also as parabolic subfamilies of ${\rm Rat}_D^{cm}$ (resp. ${\rm rat}_D^{cm}$), where we allow (and require) the further critical relation
$f_\lambda^{-1} (\{\infty\})=\{\infty\}$.

\medskip

Recall that a critical point $c$ is called {\it passive} on
an open subset
$\Lambda_0 \subseteq \Lambda$
if the sequence of holomorphic functions
$\{\lambda \mapsto f_\lambda^n(c(\lambda))\}_{n\ge 1}$ is a normal family on $\Lambda_0$. Otherwise, the critical point is called {\it active} on $\Lambda_0$.
It follows from the definition and the fact that every parabolic basin contains at least a critical point
that, in any parabolic subfamily $\Lambda$
given by $k$ parabolic relations, there are at least $k$ passive critical points on $\Lambda$.

\medskip

Recall 
\cite{Lyu83typical,MSS83} that an open subset $\Omega\subset\Lambda$ 
is in the \emph{stability locus} of $\Lambda$
if all critical points are passive on $\Omega$. We also say that the family is stable on $\Omega$.  $\Omega$ is a \emph{stable component} if it is a connected component of the stability locus.
We say that a stable component $\Omega\Subset \Lambda$
 is {\it $\Lambda$-hyperbolic} 
if,
for every $\lambda \in \Omega$, every critical point $c_j(\lambda)$ such that
$c_j$ is active on $\Lambda$
is contained in the basin of some attracting cycle 
for $f_\lambda$.
We say that $\Omega$ is a bounded $\Lambda$-hyperbolic component if we have $\Omega \Subset \Lambda$ for the topology induced by $\Lambda$.

\medskip

In the sequel, we will need the following simple facts about polynomial
bounded 
$\Lambda$-hyperbolic
components.
We refer to \cite{zhang2022dynamical, zhang2022parabolic} for some topological properties of these components in the case of cubic polynomials.

\begin{lem} \label{lem_homology}
Let $\Lambda$ be a parabolic subfamily of ${\rm poly}_D^{cm}$ 
and
 $\Omega\Subset \Lambda$
 a bounded 
$\Lambda$-hyperbolic
component.
Then
\begin{enumerate}
\item  all the critical points which are active on $\Lambda$ are contained in the basin of some attracting cycle in $\mathbb C$ for every $\lambda\in \Omega$;
\item $L(\lambda) \equiv \log D$.
\end{enumerate}
\end{lem}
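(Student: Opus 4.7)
The plan is to derive (2) from (1) via the Manning--Przytycki formula, and to establish (1) by contradiction using a Branner--Hubbard wringing deformation.

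For the implication $(1)\Rightarrow(2)$, I would invoke the identity
\[
L(\lambda) = \log D + \sum_{c\in\mathrm{Crit}(f_\lambda)\cap\mathbb C} g_\lambda(c),
\]
where $g_\lambda$ denotes the dynamical Green function of $f_\lambda$ at infinity. Granting (1), every critical point of $f_\lambda$ lies in a bounded Fatou basin: the active-on-$\Lambda$ critical points in basins of attracting cycles in $\mathbb C$, and the passive-on-$\Lambda$ ones in basins of the persistent parabolic cycles (which lie in $\mathbb C$). In both cases $g_\lambda(c)=0$, and the formula gives $L(\lambda)\equiv\log D$.

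For (1), I argue by contradiction. Suppose some active-on-$\Lambda$ critical point $c_j$ is attracted to $\infty$ at some $\lambda_0\in\Omega$, hence at every $\lambda\in\Omega$ by stability. Consider
\[
\phi(\lambda) \defeq g_\lambda(c_j(\lambda)),
\]
which is continuous and plurisubharmonic on $\Lambda$, strictly positive on $\Omega$, and pluriharmonic on $\Omega$ (since the bifurcation current $dd^c\phi$ is supported on $\mathrm{Bif}(c_j)$ and vanishes on the stability locus). The hypothesis $\Omega\Subset\Lambda$ yields a uniform bound $\phi\le M<\infty$ on $\overline\Omega$. The strategy is to construct a continuous curve $s\mapsto\lambda(s)\in\Omega$ along which $\phi(\lambda(s))\to\infty$, contradicting this bound.

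Such a curve would be produced via a Branner--Hubbard wringing at $\lambda_0$. Using the Böttcher coordinate $\varphi_{\lambda_0}$ at $\infty$, for each $s>1$ one constructs an $f_{\lambda_0}$-invariant integrable Beltrami differential $\mu_s$ on $\mathbb C$, supported in the basin of infinity and vanishing on the filled Julia set $K_{\lambda_0}$. The associated quasiconformal solution $h_s$, suitably normalized, conjugates $f_{\lambda_0}$ to a polynomial $f_{\lambda(s)}$ and is holomorphic on the interior of $K_{\lambda_0}$; in particular it preserves each attracting and parabolic cycle together with its multiplier. Two properties then follow: (i) $\lambda(s)\in\Lambda$, because the parabolic relations defining $\Lambda$ are preserved by $h_s$; and (ii) $\lambda(s)\in\Omega$, because $h_s$ provides a quasiconformal conjugacy compatible with the Ma\~n\'e--Sad--Sullivan holomorphic motion over $\Omega$, so $\lambda(s)$ remains in the same stability component. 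A direct computation in Böttcher coordinates gives $\phi(\lambda(s))=\phi(\lambda_0)+\log s$, and letting $s\to\infty$ yields the contradiction. The main technical obstacle is the careful verification of (i) and (ii) in the parabolic subfamily setting, i.e., showing that the wringing deformation preserves each parabolic critical orbit relation defining $\Lambda$ and stays in the same stable component; this adaptation of the classical Branner--Hubbard construction is where most of the work lies.
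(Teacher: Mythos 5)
Your proposal follows essentially the same route as the paper: part (2) via the Przytycki formula once all critical orbits are bounded, and part (1) by contradiction, deforming the map inside $\Omega$ (the paper invokes this as a ``standard fact,'' while you spell out the Branner--Hubbard wringing and correctly identify that the real work is checking the deformation preserves the parabolic relations and stays in the same stable component) so that the escaping critical point's Green value, equivalently its modulus, exceeds any bound, contradicting $\Omega\Subset\Lambda$. The only quibble is that under the standard stretching the Green value scales multiplicatively, $\phi(\lambda(s))=s\,\phi(\lambda_0)$ rather than $\phi(\lambda_0)+\log s$, but since the argument only needs $\phi(\lambda(s))\to\infty$ this is immaterial.
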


\begin{proof}
If some critical point is in the basin of infinity for some $\lambda_0\in \Omega$, the same must be true for all $\lambda\in \Omega$. It is a standard fact that the map $f_{\lambda_0}$ can be deformed, staying inside $\Omega$, to make the
modulus of this critical point to become as large as desired. This contradicts the assumption $\Omega \Subset \Lambda$ and proves (1).

It follows from (1) that every critical point must have bounded orbit for every $\lambda \in \Omega$ (as it can either be attracted to a parabolic cycle or an attracting cycle in $\mathbb C$). 
Hence, the Green function at every critical point is equal to $0$, and (2) follows
from the classical
Przyticki formula
\cite{Przytycki85}.
\end{proof}

\subsection{Transfer operators}\label{ss:operators}
We fix in this section a rational map $f$ of degree $D\ge2$
and two real numbers $q>2$ and $0<\beta \leq 2$. We denote by $J(f)$ the Julia set of $f$.
By \cite{BD23eq1,BD24eq2}, there exists
a norm $\|\cdot\|_\diamond$ 
for real-valued functions on $\mathbb P^1$
satisfying
the following properties for every
$g,h\colon \mathbb P^1 \to \mathbb R$:
\begin{itemize}
\item[{\bf (N1)}] $\|g\|_{\log^q}\lesssim \|g\|_\diamond
\lesssim \|g\|_{C^\beta}$;
\item[{\bf (N2)}] $\|gh\|_\diamond\lesssim \|g\|_\diamond \|h\|_\diamond$;
\item[{\bf (N3)}] $\|f_* g\|_\diamond\lesssim \|g\|_\diamond$,
\end{itemize}
where the implicit constants are independent of $g$ and $h$.
We denote by $\|\cdot\|_{ C^\beta}$
the $\beta$-H\"older norm, and by $\|\cdot\|_{\log^q}$ the $q$-$\log$-H\"older norm, which is defined as
\[
\|g\|_{\log^q}
\defeq
\|g\|_{L^\infty} +
\sup_{x\in \mathbb P^1, r>0}
(1+ |\log r|)^q \cdot
\big(
\sup_{B(x,r)} g  - \inf_{B(x,r)} g \big)
\]
for every $g\colon \mathbb P^1\to \mathbb R$.
When working on $\mathbb P^1$, we will always use the distance ${\rm dist}_{\mathbb P^1}$ induced by the spherical metric.
Recall also that the push-forward operator $f_*$
is defined
by
\[ (f_* g) (y) \defeq \sum_{f(x)=y} g(x)\]
for every $y\in \mathbb P^1$, where the
$D$
 preimages of $y$
 are counted with multiplicity.

\medskip

By defining $\|g\|_\diamond \defeq \|\Re g\|_\diamond + \|\Im g\|_\diamond$, we see that a norm with the same properties exists also
for complex-valued functions on $\mathbb P^1$
(see also \cite[Section 5.3]{BD24eq2}).
We set
$$B_\diamond \defeq \{g: \mathbb P^1 \to \mathbb C:  \|g\|_\diamond < \infty\}.$$ 
By {\bf (N1)},
the space $(B_\diamond,\|\cdot\|_\diamond)$ is a Banach space and contains all $\beta$-H\"older
continuous
functions.
We denote by $L(B_\diamond)$ the space of bounded linear operators on $B_\diamond$.
Given a \emph{potential}
(or \emph{weight})
$\phi \in B_\diamond$, the (Ruelle-Perron-Frobenius)
\emph{transfer operator} $\mathcal{L}_\phi : B_\diamond \to B_\diamond$ is defined by
\[
\mathcal L_\phi (g) (y) \defeq \sum_{f(x)=y} e^{\phi (x)} g(x).
\]
By \cite{BD24eq2}, the norm $\|\cdot\|_\diamond$ can be chosen so that
$\mathcal L_\phi \in L(B_\diamond)$
for every 
$\phi \in B_\diamond$ 
with 
$\max \phi - \min \phi < \log D$.
More precisely,
\begin{itemize}
\item[{\bf (N4)}]
for any such $\phi 
\in B_\diamond$ 
with 
$\max \phi - \min \phi < \log D$
and every $\psi\in B_\diamond$,
the function
$t \mapsto \mathcal L_{\phi + t \psi}$ is analytic
as operators in
$L (B_\diamond)$
for $|t|$ sufficiently small.
\end{itemize}
Recall that the analyticity of the map 
$t \mapsto \mathcal L_{\phi + t \psi}$ above means that, for every $t_0$ sufficiently small, there exist operators $L_{t_0,j}\in L(B_\diamond)$ such that
$\mathcal L_{\phi+t\psi}= 
\sum_{j\geq 0} 
(t-t_0)^j
L_{t_0,j} / j!$
for every $t$ in a neighbourhood of $t_0$.
It is not difficult to see that,
for every
$t_0$
such that $\max (\phi + t_0 \psi) - \min (\phi + t_0 \psi)<\log D$,
we can take $L_{j,t_0} (\cdot)
\defeq
\mathcal L_{\phi+t_0 \psi} (\psi^j \cdot)$, and that we have
\[
\|L_{j,t_0} \|_\diamond
\leq \|\mathcal L_{\phi+t_0 \psi}\|_\diamond \cdot \|\psi^j \|_\diamond
\leq c^j 
\|\mathcal L_{\phi+t_0 \psi}\|_\diamond \cdot \|\psi \|_\diamond^j,
\]
where $c$ is the implicit constant 
in {\bf (N2)} and 
$\|\mathcal L_{\phi+t_0 \psi}\|_\diamond$ is bounded by the assumption on $t_0$.

\medskip

Recall that, for every continuous function $\phi\colon \mathbb P^1\to \mathbb R$,
the {\it topological pressure} $\mathcal{P}(\phi)$ of $\phi$ is defined by $$\mathcal{P}(\phi)  \defeq \sup_{m \in \mathcal{M}_f} \left(h_m(f) + \int \phi \, dm \right)$$
where $\mathcal{M}_f$ is the set of $f$-invariant probability measures on $\mathbb P^1$
and
$h_m(f)$ is the measure-theoretic entropy of $f$ with respect to the measure $m$. A measure $m=m(\phi) \in \mathcal{M}_\sigma$ is called an {\it equilibrium state} 
of $\phi$ if
$\mathcal{P}(\phi)  =h_m(\sigma) + \int \phi \, dm.$

\medskip

We say that a bounded linear operator $\mathcal{L} \in L(B_\diamond)$ has a {\it spectral gap} if $\mathcal{L}$ has an isolated real eigenvalue $\kappa$ of multiplicity 1
and the rest of the spectrum of $\mathcal{L}$ is contained in a disk of radius strictly smaller than $\kappa$. By \cite{BD24eq2}, the norms $\|\cdot\|_\diamond$ can be taken to also satisfy this property. Namely,
\begin{itemize}
\item[{\bf (N5)}]
for every $\phi \in B_\diamond$ with $\max \phi- \min \phi<\log D$, the transfer operator $\mathcal{L}_\phi$ has a spectral gap with respect to the norm $\|\cdot\|_\diamond$,
with the largest eigenvalue equal to $e^{\mathcal P (\phi)}$. 
Moreover, there exists a unique equilibrium state $\mu_\phi$ associated with $\phi$.
\end{itemize}

\medskip

In this paper, we will need to
consider (families of)
weights $\phi$
which are H\"older continuous on $J(f)$, but not on $\mathbb P^1$. The definitions
of pressure and equilibrium states can also be stated in this setting.
On the other hand, in order to apply the results above, we will need to modify
(or directly define) them outside of $J(f)$. This will be achieved by means of the following elementary 
lemma, which also gives a uniform control on the extension, under suitable assumptions. We give the proof for the reader's convenience.

\begin{lem}\label{l:extension}
Let $W_1 \Subset W_2 \subset\mathbb R^\ell$ be open sets
and $\{\phi_s \colon J(f)\to \mathbb R\}_{s\in W_2}$ 
a family of 
$\beta$-H\"older continuous functions with the property that $s\mapsto \phi_s$ is continuous with respect to the $\beta$-H\"older norm.
Assume also that there exists
a constant $M<+\infty$
such that $\max \phi_s < M$ for all $s\in W_2$.

Then,
there exists a family $\{\widetilde \phi_s
\colon \mathbb P^1\to \mathbb R\}_{s\in W_1}$
of
$\beta$-H\"older continuous functions with the property that $s\mapsto \widetilde \phi_s$ is continuous with respect to the $(\beta/2)$-H\"older norm
and we have
$\max\widetilde \phi_s < M$
and $\widetilde \phi_s= \phi_s$ on $J(f)$  for all $s\in W_1$.
\end{lem}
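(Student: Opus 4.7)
The plan is to obtain $\widetilde\phi_s$ by first extending $\phi_s$ from $J(f)$ to $\mathbb P^1$ via the classical McShane--Whitney formula, and then composing with a smooth truncation in the range to enforce the strict bound $\max\widetilde\phi_s < M$. Since $W_1\Subset W_2$ and $s\mapsto\phi_s$ is continuous in the $\beta$-Hölder norm, the quantities $L_s\defeq\|\phi_s\|_{C^\beta(J(f))}$ and $\max_{J(f)}\phi_s$ depend continuously on $s$. Compactness of $\overline{W_1}\subset W_2$, together with the hypothesis $\max\phi_s<M$ on $W_2$, yields finite constants $L\defeq\sup_{s\in\overline{W_1}}L_s$ and $M'\defeq\sup_{s\in\overline{W_1}}\max_{J(f)}\phi_s<M$. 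Fix $M_0\in(M',M)$ and a smooth non-decreasing function $\rho\colon\mathbb R\to(-\infty,M_0)$ such that $\rho(x)=x$ for $x\leq M'$.

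For each $s\in W_1$, set
\[
\widehat\phi_s(x)\defeq\inf_{y\in J(f)}\bigl[\phi_s(y)+L\cdot{\rm dist}_{\mathbb P^1}(x,y)^\beta\bigr],
\]
using the uniform Hölder constant $L\geq L_s$; this extension coincides with $\phi_s$ on $J(f)$ and is globally $\beta$-Hölder on $\mathbb P^1$ with constant at most $L$ (via the standard subadditivity of $t\mapsto t^\beta$). Define $\widetilde\phi_s\defeq\rho\circ\widehat\phi_s$. Then $\widetilde\phi_s=\rho(\phi_s)=\phi_s$ on $J(f)$, since $\phi_s\leq M'$ there and $\rho$ is the identity below $M'$; the bound $\max\widetilde\phi_s<M_0<M$ is built into the choice of $\rho$; and $\beta$-Hölder regularity is preserved under composition with the smooth, compactly-supported-derivative function $\rho$.

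The main technical point is the continuity of $s\mapsto\widetilde\phi_s$ in the $(\beta/2)$-Hölder norm. A direct comparison of the McShane extensions and the triangle inequality give
\[
|\widehat\phi_s(x)-\widehat\phi_{s'}(x)|\leq\|\phi_s-\phi_{s'}\|_{L^\infty(J(f))}\leq\|\phi_s-\phi_{s'}\|_{C^\beta(J(f))},
\]
so $s\mapsto\widehat\phi_s$, and hence $s\mapsto\widetilde\phi_s$ (as $\rho$ is Lipschitz), is continuous from $W_1$ to $L^\infty(\mathbb P^1)$. Moreover, by construction the $\beta$-Hölder norms of the $\widetilde\phi_s$ are bounded uniformly in $s\in W_1$. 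The passage from sup-norm continuity to $(\beta/2)$-Hölder continuity is achieved by the standard interpolation inequality: whenever $\|g\|_{C^\beta(\mathbb P^1)}\leq K$ and $\|g\|_{L^\infty}\leq\varepsilon$, one has
\[
|g(x)-g(y)|\leq\min\bigl(2\varepsilon,\,K\,{\rm dist}_{\mathbb P^1}(x,y)^\beta\bigr)\leq\sqrt{2K\varepsilon}\,{\rm dist}_{\mathbb P^1}(x,y)^{\beta/2},
\]
and therefore $\|g\|_{C^{\beta/2}}\leq\varepsilon+\sqrt{2K\varepsilon}\to 0$ as $\varepsilon\to 0$. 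Applying this to $g=\widetilde\phi_s-\widetilde\phi_{s'}$ gives the claim. The loss from $\beta$ to $\beta/2$ is exactly the cost of the nonlinearity of the McShane extension (which prevents directly transferring $\beta$-Hölder convergence of $\phi_s$); a linear Whitney-type extension operator could avoid this loss, but the weaker $(\beta/2)$-regularity in the parameter is sufficient for the subsequent applications.
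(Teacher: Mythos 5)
Your proof is correct, but it takes a genuinely different route from the paper. The paper applies the McShane--Whitney extension \emph{jointly} to the two-variable function $\Phi(s,z)=\phi_s(z)$ on the product metric space $\overline{W_1}\times J(f)$, using the $\max$-formula $\tilde\Phi(s,z)=\sup_{(t,x)}\bigl[\phi_t(x)-C\bigl({\rm dist}_E(s,t)+{\rm dist}_{\mathbb P^1}(z,x)\bigr)^\beta\bigr]$, and then proves the $(\beta/2)$-Hölder continuity in $s$ by a direct two-case estimate on the cross-differences, splitting according to whether ${\rm dist}_E(s_1,s_2)$ or ${\rm dist}_{\mathbb P^1}(z_1,z_2)$ is larger. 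You instead extend each slice $\phi_s$ separately with a single uniform Hölder constant, and recover the parameter-continuity via the standard interpolation inequality $\|g\|_{C^{\beta/2}}\lesssim\|g\|_{L^\infty}+\sqrt{\|g\|_{C^\beta}\,\|g\|_{L^\infty}}$. Both arguments lose the same factor of two in the exponent, and for the same underlying reason (nonlinearity of the extension operator); your interpolation step packages this more modularly, while the paper's joint extension makes the product-space Hölder estimate explicit and avoids introducing a uniform slice-wise constant. One small inefficiency: because you chose the $\inf$-version of McShane (the upper extension), the resulting $\widehat\phi_s$ need not satisfy $\widehat\phi_s\leq\max\phi_s$, which is why you need the truncation $\rho$. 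Using the $\sup$-version, as the paper does, yields $\tilde\phi_s\leq\max_{J(f)}\phi_s<M$ automatically and makes the truncation step unnecessary; but your truncation is carried out correctly and does no harm.
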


\begin{proof}
For a single $\beta$-H\"older continuous
map $\phi\colon J(f)\to\mathbb R$, the classical McShane-Whitney theorem \cite{McShane34,Whitney34}
gives a
$\beta$-H\"older continuous
extension $\tilde \phi\colon \mathbb P^1 \to \mathbb R$ with $\tilde \phi \leq \max \phi$. Recall that such extension is given by the explicit formula
\[\tilde\phi (z)
\defeq 
\max_{x \in J (f)}
\left(\phi(x) -
C
{\rm dist}_{\mathbb P^1} (x,z)^\beta \right),\]
where $C$ is a bound for the $\beta$-H\"older semi-norm of $\phi$.
Observe that
the modulus of continuity of $\tilde \phi$ is the same as that of $\phi$, i.e.,
by construction,
the $\beta$-H\"older semi-norm of $\tilde \phi$ 
is equal to the
$\beta$-H\"older semi-norm of $\phi$. In the following, for simplicity, we denote by $\|\cdot\|'_{C^\beta}$ the $\beta$-H\"older semi-norm, and observe that we
have
$\|\cdot\|_{C^\beta}=\|\cdot\|_{L^\infty}+ \|\cdot\|'_{C^\beta}$.

\medskip

Let us now consider a family $\phi_s$
as in the statement.
Consider the compact metric space $\overline{W_1}\times \mathbb P^1$, endowed with the product metric 
of $\overline{W_1}$ (with the standard Euclidean distance ${\rm dist}_E$)
and $\mathbb P^1$ (with the spherical distance),
and its compact subset $\overline{W_1} \times J(f)$. Define the function $\Phi\colon \overline {W_1}\times J(f)$
by $\Phi(s,z)= \phi_s (z)$. 
Observe that $\Phi$ is $\beta$-H\"older continuous, and let $C$ be a bound for its $\beta$-H\"older semi-norm.
Consider the McShane-Whitney extension of $\Phi$ to $\overline {W_1} \times \mathbb P^1$, which is given by
\[
\tilde \Phi(s,z) = \sup_{(t,x)\in \overline{W_1} \times J(f)} \phi_s (x) - C 
\big(({\rm dist}_E (s,t) + {\rm dist}_{\mathbb P^1}
(z,x)\big)^\beta.
\]

For $s\in W_1$, set $\tilde \phi_s (\cdot)\defeq \tilde \Phi (s, \cdot)$. The upper bound for these functions as in the statement follows from the inequality $\max \tilde \Phi\leq \max \Phi$. 

\medskip

We now
show the continuity of the family
$\{\tilde \phi_s\}$ with respect to the 
$(\beta/2)$-H\"older norm.
It follows from their
definition  that the $\tilde \phi_s$'s
satisfy
\begin{equation}
\label{eq:check-infty}
\|\tilde \phi_{s_1}-\tilde \phi_{s_2}\|_{L^\infty}\to 0
\quad
\mbox{ as }
\quad 
{\rm dist}_E (s_1,s_2)\to 0.
\end{equation}

 Given $s_1,s_2\in W_1$ and $z_1, z_2 \in \mathbb P^1$, we will show the inequality
\begin{equation}\label{eq:bound-holder}
|\phi_{s_1} (z_1)- \phi_{s_2} (z_1) - \phi_{s_1} (z_2) - \phi_{s_2} (z_2)|
\leq 2C ({\rm dist}_{\mathbb P^1} (z_1, z_2))^{\beta/2}({\rm  dist}_{E} (s_1, s_2))^{\beta/2}.
\end{equation}
Together with \eqref{eq:check-infty},
this inequality
implies 
that
we have
\[
\|\phi_{s_1}-\phi_{s_2}\|_{C^{\beta/2}}
\to 0 
\quad
\mbox{ as } 
\quad
{\rm  dist}_{E} (s_1, s_2)\to 0,
\]
and hence the desired continuity
 with respect to the $(\beta/2)$-H\"older norm.

\medskip

Observe first that, as $\|\tilde \Phi\|'_{C^\beta}\leq C$, we have
$\|\tilde \phi_s\|'_{C^\beta}
=
\|\tilde \Phi (s, \cdot)\|'_{C^\beta}
\leq C$ for all $s\in W_1$
and 
$\| 
\tilde \Phi (\cdot, z)\|'_{C^\beta}\leq C$ for all $z\in \mathbb P^1$.
We need to consider the following two cases.

\medskip

{\bf Case 1:
${\rm dist}_E (s_1, s_2) \geq {\rm dist}_{\mathbb P^1} (z_1, z_2)$.}

\medskip

In this case, we can bound the left-hand side of \eqref{eq:bound-holder} as
\[
\begin{aligned}
|\phi_{s_1} (z_1)- \phi_{s_2} (z_1) - \phi_{s_1} (z_2) - \phi_{s_2} (z_2)|
& \leq 
(
\|\tilde \Phi ({s_1}, \cdot)\|'_{C^\beta} 
+
\|\tilde \Phi ({s_2}, \cdot)\|'_{C^\beta})
({\rm dist}_{\mathbb  P^1} (z_1, z_2))^\beta\\
& \leq 2C ({\rm dist}_{\mathbb P^1} (z_1, z_2))^\beta\\
&  \leq 2C 
({\rm dist}_E (s_1, s_2))^{\beta/2}
({\rm dist}_{\mathbb P^1} (z_1, z_2))^{\beta/2}.
\end{aligned}
\]

{\bf Case 2:
${\rm dist}_E (s_1, s_2) \leq {\rm dist}_{\mathbb P^1}
(z_1, z_2)$.}

\medskip

Reversing the roles of the $s_j$'s
and $z_j$'s, 
we now 
bound the left-hand side of \eqref{eq:bound-holder} as
\[
\begin{aligned}
|\phi_{s_1} (z_1)- \phi_{s_2} (z_1) - \phi_{s_1} (z_2) - \phi_{s_2} (z_2)|
& \leq 
(
\|\tilde \Phi (\cdot, {z_1})\|'_{C^\beta} 
+
\|\tilde \Phi (\cdot, {z_2})\|'_{C^\beta})
({\rm dist}_E (s_1, s_2))^\beta\\
& \leq 2C ({\rm dist}_E (s_1, s_2))^\beta\\
&  \leq 2C 
({\rm dist}_E (s_1, s_2))^{\beta/2}
({\rm dist}_{\mathbb P^1} (z_1, z_2))^{\beta/2}.
\end{aligned}
\]

Hence,
\eqref{eq:bound-holder} holds in either case. The proof is complete.
\end{proof}

\begin{lem}\label{l:compare}
Let $\phi\colon J(f)\to \mathbb R$ 
and $\tilde \phi \colon \mathbb P^1 \to \mathbb R$
be continuous functions with
$\tilde \phi_{|J(f)}=\phi$
and
$\max \tilde \phi = \max \phi<\mathcal P(f_{|J(f)},\phi)$. 
Then $\phi$ and $\tilde \phi$ have the same equilibrium states, which are supported on $J (f)$. In particular, we have $\mathcal P(f_{|J(f)},\phi)=\mathcal P(f,\tilde\phi)$.
\end{lem}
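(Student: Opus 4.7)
The plan is to use the variational principle together with the fact that any $f$-invariant probability measure supported on the Fatou set $F(f) \defeq \mathbb{P}^1 \setminus J(f)$ carries zero entropy, and then to leverage the strict inequality $\max \phi < \mathcal{P}(f_{|J(f)}, \phi)$ to force every equilibrium state of $\tilde\phi$ to live on $J(f)$. The inequality $\mathcal{P}(f,\tilde\phi) \geq \mathcal{P}(f_{|J(f)},\phi)$ is immediate, since any $f_{|J(f)}$-invariant probability on $J(f)$ is naturally an $f$-invariant probability on $\mathbb{P}^1$ with $\tilde\phi = \phi$ on its support.

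For the reverse inequality, given an arbitrary $f$-invariant probability $m$ on $\mathbb{P}^1$, I would set $\alpha \defeq m(J(f)) \in [0,1]$ and use the total invariance of $J(f)$ and $F(f)$ to decompose $m = \alpha m_J + (1-\alpha) m_F$, with $m_J, m_F$ respectively $f$-invariant probabilities on $J(f)$ and $F(f)$ (omitting the trivial factor when $\alpha \in \{0,1\}$). By affinity of the Kolmogorov--Sinai entropy on the simplex of invariant measures,
\[ h_m(f) = \alpha\, h_{m_J}(f) + (1-\alpha)\, h_{m_F}(f). \]
The key ingredient is that $h_{m_F}(f) = 0$: by the classification of Fatou components, together with Poincaré recurrence (which prevents any non-trivial invariant measure from being supported in an attracting or parabolic basin), every $f$-invariant probability supported on $F(f)$ is a convex combination of Dirac masses on attracting cycles and of rotation-invariant measures on Siegel disks or Herman rings, all of which have zero entropy. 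Combined with $\int \tilde\phi\, dm_F \leq \max \tilde\phi = \max \phi$, this yields
\[ h_m(f) + \int \tilde\phi\, dm \leq \alpha\, \mathcal{P}(f_{|J(f)},\phi) + (1-\alpha) \max \phi. \]

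Using the standing assumption $\max \phi < \mathcal{P}(f_{|J(f)},\phi)$, the right-hand side is at most $\mathcal{P}(f_{|J(f)},\phi)$, and strictly less unless $\alpha = 1$. Taking supremum over $m$ gives $\mathcal{P}(f,\tilde\phi) = \mathcal{P}(f_{|J(f)},\phi)$; moreover, equality along the chain forces $\alpha = 1$ and $m_J$ to be an equilibrium state for $\phi$, so the equilibrium states of $\tilde\phi$ and of $\phi$ coincide and are all supported on $J(f)$. The essential, and essentially only non-formal, step is the entropy vanishing $h_{m_F}(f) = 0$, which relies on the classical Fatou--Sullivan classification of Fatou components; everything else is a direct variational principle computation enabled by the strict inequality in the hypothesis.
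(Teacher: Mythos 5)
Your proof is correct and follows essentially the same route as the paper: both reduce to the variational principle and the fact that $f$-invariant measures carried by the Fatou set have zero entropy, and both exploit the strict inequality $\max\phi < \mathcal P(f_{|J(f)},\phi)$ to force equilibrium states onto $J(f)$. The only difference is that you spell out the affine decomposition $m=\alpha m_J+(1-\alpha)m_F$ (which the paper leaves implicit, treating only measures supported entirely in the Fatou set), so your version is a bit more complete at that step.
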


\begin{proof}
It follows from the definitions that any equilibrium state 
for $\tilde \phi$ supported on $J(f)$ is also an equilibrium state for $\phi$. Hence, it is enough to show that we have
\[
h_\nu (f) + \int \tilde \phi d\nu < \mathcal P (f,\tilde \phi)
\]
for every invariant probability
measure $\nu$ whose support is outside $J(f)$. Any such invariant measure necessarily satisfies $h_{\nu} (f)=0$. It then follows from the assumptions that we have
\[
\int \tilde \phi d\nu \leq \max \tilde \phi
= \max \phi < \mathcal P(f_{|J(f)},\phi)\leq \mathcal P(f,\tilde \phi).
\]
The assertion follows.
\end{proof}

Combining \cite{BD24eq2} with \cite[Main Theorem]{InoquioRL12}, one can see that 
the condition $\max_{z\in\mathbb P^1}\phi-\min_{z\in\mathbb P^1} \phi<\log D$ can be weakened to $\max_{z\in\mathbb P^1} \phi < \mathcal P(\phi)$ in both
{\bf (N4)} and {\bf (N5)}
(and actually to 
$\max_{z\in J(f)} \phi < \mathcal P(\phi)$). More precisely, we have the following lemma.

\begin{lem} \label{lem_IRL}
Let $f$ be a rational map of degree $D\ge2$
and 
$\phi \colon J(f) \to \mathbb R$
a $\beta$-H\"older continuous function.
If the Lyapunov exponent of each equilibrium state of $\phi$ is strictly positive, then there
exists a $\beta$-H\"older
continuous extension $\tilde \phi$
of $\phi$
to $\mathbb P^1$
such that
the transfer operator $\mathcal{L}_{\tilde \phi}\colon B_\diamond \to B_\diamond$ has a spectral gap, and the function $t\mapsto \mathcal L_{\tilde \phi + t \psi}$
is analytic as operators in
$L (B_\diamond)$ near $t=0$ for every $\psi \in B_\diamond$. 
Here $\|\cdot\|_\diamond$ is chosen so that $\|\cdot\|_\diamond
\lesssim \|\cdot\|_{C^\beta}$ in {\bf (N1)}. 
\end{lem}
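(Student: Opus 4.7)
The plan is to reduce to the extended spectral-gap framework announced in the paragraph preceding the statement. The decisive analytic input is \cite[Main Theorem]{InoquioRL12}, which I would invoke to conclude that the positivity of the Lyapunov exponent of every equilibrium state of $\phi$ forces the strict inequality
\[
\max_{z \in J(f)} \phi(z) \; < \; \mathcal{P}\bigl(f_{|J(f)},\phi\bigr).
\]
This replaces the coarse condition $\max \phi - \min \phi < \log D$ used in {\bf (N4)}--{\bf (N5)} and is precisely the ingredient alluded to in the remark just before the lemma.

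Next, I would produce the extension by the McShane--Whitney formula (cf.\ the proof of Lemma~\ref{l:extension}), setting
\[
\tilde\phi(z) \defeq \max_{x\in J(f)} \bigl(\phi(x) \,-\, C\,\mathrm{dist}_{\mathbb{P}^1}(x,z)^{\beta}\bigr),
\]
where $C$ bounds the $\beta$-H\"older semi-norm of $\phi$. By construction $\tilde\phi$ is a $\beta$-H\"older continuous extension of $\phi$, and the inequality $\max_{\mathbb{P}^1}\tilde\phi = \max_{J(f)}\phi$ is immediate from the formula. Combined with the first step, this gives $\max \tilde\phi < \mathcal{P}(f_{|J(f)},\phi)$, and so Lemma~\ref{l:compare} applies to yield that $\phi$ and $\tilde\phi$ share their equilibrium states (all supported in $J(f)$) and that $\mathcal{P}(f,\tilde\phi) = \mathcal{P}(f_{|J(f)},\phi)$. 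In particular,
\[
\max_{\mathbb{P}^1}\tilde\phi \; < \; \mathcal{P}(f,\tilde\phi).
\]

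At this point I would invoke the strengthened version of {\bf (N5)}, valid under the weaker assumption $\max \tilde\phi < \mathcal{P}(f,\tilde\phi)$ and obtained by combining \cite{BD24eq2} with \cite{InoquioRL12}, to conclude that $\mathcal{L}_{\tilde\phi}\colon B_\diamond \to B_\diamond$ has a spectral gap. The condition $\|\cdot\|_\diamond \lesssim \|\cdot\|_{C^\beta}$ in {\bf (N1)} guarantees $\tilde\phi \in B_\diamond$. For the analyticity of $t \mapsto \mathcal{L}_{\tilde\phi + t\psi}$, both $\max(\tilde\phi + t\psi)$ and $\mathcal{P}(f,\tilde\phi + t\psi)$ are continuous in $t$ (the pressure is $1$-Lipschitz in the uniform norm), so the strict inequality $\max(\tilde\phi + t\psi) < \mathcal{P}(f,\tilde\phi + t\psi)$ persists for $|t|$ sufficiently small. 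The strengthened form of {\bf (N4)} then delivers analyticity of $t \mapsto \mathcal{L}_{\tilde\phi + t\psi}$ in $L(B_\diamond)$ near $t = 0$.

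The main obstacle is conceptual rather than technical: the conversion from the assumption on Lyapunov exponents to the strict pressure inequality, which is exactly the content of \cite[Main Theorem]{InoquioRL12}. Once that conversion is in hand, the rest of the proof is the organized combination of the McShane--Whitney extension, Lemma~\ref{l:compare}, and the stated strengthening of the Bianchi--Dinh spectral framework. A secondary point worth verifying is the persistence of the strict pressure inequality under small $C^0$-perturbations of the weight, which is what permits the analytic expansion of $t \mapsto \mathcal{L}_{\tilde\phi + t\psi}$ on an open interval rather than only at $t=0$.
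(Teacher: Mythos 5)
Your approach is essentially the paper's, and most of the organization (McShane--Whitney extension, Lemma~\ref{l:compare}, the strengthened \textbf{(N4)}--\textbf{(N5)}) matches. However, there is a small but genuine imprecision at the decisive first step: the Main Theorem of \cite{InoquioRL12} does \emph{not} directly give $\max_{J(f)}\phi < \mathcal P(f_{|J(f)},\phi)$. What it gives is that $\phi$ is \emph{hyperbolic}, i.e.\ there exists some integer $n\ge 1$ such that $\max_{z\in J(f)} S_n\phi < \mathcal P(f^n, S_n\phi)$; nothing forces $n=1$. The paper handles this by first replacing $f$ with $f^n$ (noting this does not affect the statistical properties in the statement), and only then proceeding with $n=1$. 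Without this reduction your chain of inequalities does not launch, since the hypothesis you feed into the McShane--Whitney step and into Lemma~\ref{l:compare} is $\max_{J(f)}\phi < \mathcal P(f,\phi)$, which you have not actually established for the original $f$.

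The fix is exactly what the paper does: pass to $f^n$ and $S_n\phi$ (the norm $\|\cdot\|_\diamond$, the extension, Lemma~\ref{l:compare}, and the spectral-gap framework of \cite{BD24eq2} all apply verbatim to $f^n$), then descend the conclusion back to $f$ since equilibrium states, spectral gap, and the analyticity of $t\mapsto\mathcal L_{\tilde\phi+t\psi}$ are unchanged under passing to an iterate. Your closing observation --- that the strict inequality $\max(\tilde\phi+t\psi) < \mathcal P(f,\tilde\phi+t\psi)$ persists for small $|t|$ because the pressure is $1$-Lipschitz in the sup norm --- is correct and actually fills in a detail the paper leaves implicit; keep it.
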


\begin{proof}
Suppose that the Lyapunov exponent of each equilibrium state of $\phi$ is strictly positive. Then,
by \cite[Main Theorem]{InoquioRL12}, the potential $\phi$ is {\it hyperbolic}, namely, there exists an integer $n \ge 1$ such that $\max_{z \in J(f)} S_n\phi < \mathcal{P}(f^n,S_n\phi)$,
where
$S_n\phi$ 
denotes
the $n$-th Birkhoff sum
$\phi + \phi \circ f + \cdots + \phi \circ f^{n-1}$ of $\phi$. 
Up to replacing $f$ by $f^n$ (which does not change the statistical properties in the statement), we can assume that $n=1$, i.e., that
we have $\max_{z\in J(f)} \phi < \mathcal P (f, \phi)$.
Let $\tilde \phi$ be a $\beta$-H\"older continuous extension of $\phi$ as in Lemma
\ref{l:extension}. In particular, we have
$\tilde \phi\leq\max_{J(f)} \phi$, which gives
$\tilde \phi \leq \mathcal P(f,\phi)= \mathcal P(f,\tilde \phi)$, where the last equality follows from Lemma \ref{l:compare}.

\medskip

We now observe that, in \cite{BD23eq1} and \cite{BD24eq2}, the assumption $\max \tilde \phi -\min \tilde \phi<\log D$ can be replaced by $\max \tilde \phi < \mathcal P(f,\tilde \phi)$ as soon as the last quantity has already been defined (this is not the case in \cite{BD23eq1,BD24eq2}, hence the need for an assumption not directly
involving the pressure). In particular, the same proof as in \cite{BD24eq2} gives the spectral gap for the action of $\mathcal L_{\tilde \phi}$ with respect to the norm $\|\cdot\|_\diamond$ chosen as in the statement, and the other properties. The assertion follows.
\end{proof}

In particular, the above lemmas allow
us to apply the machinery of \cite{BD23eq1,BD24eq2} to functions which are a priori defined only on $J(f)$ (or which are
H\"older continuous only on $J(f)$). More precisely, 
for every $g\colon J(f)\to \mathbb R$, define
\[
\|g\|_\diamond \defeq \inf \{\|\tilde g\|_\diamond
\colon 
\tilde g \colon \mathbb P^1 \to \mathbb R, 
\quad \tilde g_{|J(f)} = g\}
\]
and
\begin{equation}\label{eq:def-BJ}
B_\diamond (J) \defeq \{g\colon J(f)\to \mathbb R :  \|g\|_\diamond<\infty\}.
\end{equation}
By Lemma
\ref{l:extension}, every $\beta$-H\"older continuous function on $J(f)$ belongs to $B_\diamond(J)$. By {\bf (N1)},
every
element of $B_\diamond (J)$
is $q$-$\log$-H\"older-continuous for some $q>2$ depending on the choice of the norm
$\|\cdot\|_\diamond$. Given $\phi,g\colon J(f)\to \mathbb R$, it is clear that
we have 
$\|\mathcal L_\phi g \|_\diamond
\leq \|\mathcal L_{\tilde \phi} \tilde g\|_\diamond$
for every extensions $\tilde \phi, \tilde g$ of $\phi$ and $g$. In particular, for every $\phi\in B_\diamond (J)$
with 
$\max \phi < \mathcal P (f,\phi)$,
the operator $\mathcal L_\phi$ is bounded
and has a spectral gap on $B_\diamond (J)$,
with the
largest eigenvalue
equal to
$e^{\mathcal P(f_{|J(f)},\phi)}=e^{\mathcal P(f,\tilde\phi)}$.

\begin{defn}
Given a continuous function $\psi : J(f) \to \mathbb R$ and an invariant probability measure $m$ supported on $J(f)$, we define formally 
$${\rm Var}(\psi, m) \defeq \lim_{n \to \infty} \frac{1}{n} \int_{J(f)} \left| \sum_{i=0}^{n-1} \psi \circ f^i(x) \right|^2 dm
\in [0,+\infty].$$
\end{defn}

The following proposition is 
a direct consequence of 
{\bf (N4)} and {\bf (N5)}, together with the above extension lemmas;
see for instance
\cite{McMullen08,Parry90}. 

\begin{prop} \label{prop_deripressure}
Let $\{\phi_t\}_{t \in (-1,1)}$ be a smooth path in
$C^{\alpha}(J(f),\mathbb R)$ for some $\alpha \in (0,1)$. Suppose that
$\phi_0$ admits a unique equilibrium state
$m = m(\phi_0)$
and that $m$
has a 
strictly positive Lyapunov exponent.
Set $\dot{\phi_0} = d\phi_t/dt |_{t=0}$. Then we have \begin{equation}\label{eq:first-derivative}
\frac{d\mathcal{P}(\phi_t)}{dt} \bigg|_{t=0} = \int_{J(f)} \dot{\phi_0} dm.
\end{equation} If the  expressions
in \eqref{eq:first-derivative}
are equal to zero,
then we have 
\begin{equation*} 
\frac{d^2\mathcal{P}(\phi_t)}{dt^2} \bigg|_{t=0} = {\rm Var}(\dot{\phi_0}, m)  + \int \ddot{\phi_0}dm.
\end{equation*}
\end{prop}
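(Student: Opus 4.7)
The plan is to reduce the statement to a classical analytic perturbation computation for the leading eigenvalue of the transfer operator, using the spectral gap established in Section \ref{ss:operators}. Since $\phi_0$ is $\alpha$-H\"older on $J(f)$ and its equilibrium state $m$ has strictly positive Lyapunov exponent, Lemma \ref{lem_IRL} furnishes a H\"older extension $\tilde\phi_0\colon \mathbb P^1\to\mathbb R$ with $\max\tilde\phi_0<\mathcal P(f,\tilde\phi_0)$, so that $\mathcal L_{\tilde\phi_0}$ acts on $B_\diamond$ with a simple isolated leading eigenvalue $\lambda_0=e^{\mathcal P(\phi_0)}$. Applying Lemma \ref{l:extension} to the whole family $\{\phi_t\}$ on a small interval around $0$ yields extensions $\tilde\phi_t$ depending continuously in $C^{\alpha/2}$, hence in $B_\diamond$ by \textbf{(N1)}, with a uniform upper bound. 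For $|t|$ small the condition $\max\tilde\phi_t<\mathcal P(f,\tilde\phi_t)$ persists by continuity of the pressure, and Lemma \ref{l:compare} then guarantees that $\phi_t$ and $\tilde\phi_t$ share the same pressure and equilibrium state.

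Next I would promote continuity to smoothness at the operator level. Directly from the definition, $\frac{d}{dt}\mathcal L_{\tilde\phi_t}(g)=\mathcal L_{\tilde\phi_t}(\dot{\tilde\phi}_t\cdot g)$, and analogously for higher derivatives; combined with the smoothness of $t\mapsto \tilde\phi_t$ in $B_\diamond$ and property \textbf{(N2)}, this shows that $t\mapsto \mathcal L_{\tilde\phi_t}$ is smooth as a map into $L(B_\diamond)$ (analyticity as in \textbf{(N4)} is more than sufficient). Thanks to the spectral gap \textbf{(N5)}, Kato perturbation theory produces smooth families of top eigenfunctions $h_t\in B_\diamond$, eigenvalues $\lambda_t=e^{\mathcal P(\phi_t)}$, and dual eigenmeasures $\nu_t$, normalized so that $\nu_t(h_t)=1$, with equilibrium state $m_t=h_t\nu_t$. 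Differentiating the eigenvalue equation $\mathcal L_{\tilde\phi_t}h_t=\lambda_t h_t$ at $t=0$ gives
\begin{equation*}
\mathcal L_0(\dot\phi_0 h_0)+\mathcal L_0\dot h_0=\dot\lambda_0 h_0+\lambda_0\dot h_0,
\end{equation*}
and pairing with $\nu_0$ (which satisfies $\mathcal L_0^*\nu_0=\lambda_0\nu_0$) cancels the $\dot h_0$ terms, leaving $\dot\lambda_0/\lambda_0=\int\dot\phi_0\,dm$. Since $\log\lambda_t=\mathcal P(\phi_t)$, this is the first identity.

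For the second identity, I would differentiate the eigenvalue equation twice at $t=0$ and again pair with $\nu_0$; the $\ddot h_0$ contribution cancels and, using $\dot\lambda_0=0$, one obtains
\begin{equation*}
\ddot\lambda_0/\lambda_0=\int\ddot\phi_0\,dm+\int\dot\phi_0^2\,dm+2\int\dot\phi_0\,(\dot h_0/h_0)\,dm.
\end{equation*}
The corrector $\dot h_0$ is determined modulo $\mathbb C h_0$ by $(\lambda_0 I-\mathcal L_0)\dot h_0=\mathcal L_0(\dot\phi_0 h_0)$, and the right-hand side lies in the range of the complementary spectral projection, since $\nu_0(\mathcal L_0(\dot\phi_0 h_0))=\lambda_0\int\dot\phi_0\,dm=0$. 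The spectral gap makes $(\lambda_0 I-\mathcal L_0)^{-1}$ bounded there; conjugating by $h_0$ and rescaling to the normalized Ruelle operator $\hat{\mathcal L}g\defeq \lambda_0^{-1}h_0^{-1}\mathcal L_0(h_0 g)$ (which fixes $1$ and preserves $m$) gives $\dot h_0/h_0=(I-\hat{\mathcal L})^{-1}\hat{\mathcal L}(\dot\phi_0)$ up to an additive constant, which is irrelevant because $\int\dot\phi_0\,dm=0$. Combined with the classical identity
\begin{equation*}
{\rm Var}(\dot\phi_0,m)=\int \dot\phi_0\,\bigl(I+2(I-\hat{\mathcal L})^{-1}\hat{\mathcal L}\bigr)(\dot\phi_0)\,dm
\end{equation*}
(see, e.g., \cite{McMullen08,Parry90}), this identifies $\int\dot\phi_0^2\,dm+2\int\dot\phi_0\,(\dot h_0/h_0)\,dm$ with ${\rm Var}(\dot\phi_0,m)$. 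Since $\dot\lambda_0=0$, we have $\frac{d^2}{dt^2}\log\lambda_t|_{t=0}=\ddot\lambda_0/\lambda_0$, yielding the second identity.

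The main obstacle is the first step: producing the extensions $\tilde\phi_t$ uniformly in $t$ while preserving both the regularity needed to apply \textbf{(N4)}--\textbf{(N5)} and the spectral-gap hypothesis $\max\tilde\phi_t<\mathcal P(f,\tilde\phi_t)$. Once this is in place, all subsequent derivative computations are routine perturbation-theoretic manipulations with the transfer operator and its dual eigenmeasure.
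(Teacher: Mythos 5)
Your proposal is correct and takes essentially the same approach the paper intends: the paper's ``proof'' of Proposition \ref{prop_deripressure} is a one-line citation to \cite{McMullen08,Parry90} together with {\bf (N4)}, {\bf (N5)}, and the extension lemmas, and what you have written out is exactly the standard Kato-perturbation argument those references implement. One technical point worth tightening: Lemma \ref{l:extension} only guarantees \emph{continuity} of $t \mapsto \tilde\phi_t$ in the H\"older (hence $\|\cdot\|_\diamond$) norm, not differentiability, so the identity $\frac{d}{dt}\mathcal L_{\tilde\phi_t} = \mathcal L_{\tilde\phi_t}(\dot{\tilde\phi}_t \cdot)$ cannot be taken at face value for the McShane--Whitney extensions. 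The clean fix, in the spirit of Proposition \ref{p:SU-full}, is to work with the operators on $B_\diamond(J)$: the extension is used only to obtain boundedness and the spectral gap, while the derivative is the bounded operator $g \mapsto \mathcal L_{\phi_0}(\dot\phi_0\, g)$, which makes sense because $\dot\phi_0 \in C^\alpha(J(f)) \subset B_\diamond(J)$ (the $C^1$ hypothesis on $t \mapsto \phi_t$ is on $J(f)$, where it is preserved). You flagged this yourself as the main obstacle, and it is indeed the only place requiring care; once resolved this way, the eigenvalue-differentiation computation and the variance identity are exactly as you present them.
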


In the statement above and in the rest of the paper, the notation $\dot{\phi_0}=d\phi_t/dt |_{t=0} $
stands for the \emph{function} which is defined pointwise as 
$\dot{\phi_0} (z)=d\phi_t(z)/dt |_{t=0}$
for every $z\in J(f)$. 

\begin{lem} \label{lem_variance_cobdry}
Fix $\phi,\psi \in C^{\alpha}(J(f))$.  Suppose that
$\phi$ admits a unique equilibrium state
$m = m(\phi)$
and that $m$
has
a strictly positive Lyapunov exponent.
Then we have
${\rm Var}(\psi, m(\phi)) = 0$ if and only if $\psi$ is $C^0$-cohomologous to 0, i.e., there exists a continuous function $h : J(f) \to \mathbb R$ such that $\psi = h - h\circ f$.
\end{lem}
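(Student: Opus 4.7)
The ``if'' direction is immediate: if $\psi=h-h\circ f$ with $h$ continuous on $J(f)$, Birkhoff sums telescope to $S_n\psi=h-h\circ f^n$, which is uniformly bounded, so $n^{-1}\int(S_n\psi)^2\,dm\to 0$.

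For the ``only if'' direction, I plan a Gordin-type martingale-coboundary decomposition powered by the spectral gap of the normalized transfer operator on $B_\diamond(J)$. Observe first that ${\rm Var}(\psi,m)=0$ forces $\int\psi\,dm=0$, since otherwise Cauchy-Schwarz gives $\int(S_n\psi)^2\,dm\ge n^2(\int\psi\,dm)^2$, contradicting zero variance after dividing by $n$. Because $m$ has strictly positive Lyapunov exponent, Lemma \ref{lem_IRL} yields a spectral gap of $\mathcal L_\phi$ on $B_\diamond(J)$. Conjugating by the top eigenfunction $h_0$ and dividing by $e^{\mathcal P(\phi)}$ produces the normalized operator $\hat{\mathcal L}g\defeq h_0^{-1}e^{-\mathcal P(\phi)}\mathcal L_\phi(h_0 g)$, which fixes $1$, preserves $m$, inherits the spectral gap, and satisfies both the pull-through property $\hat{\mathcal L}(v\circ f)=v$ and the $L^2(m)$-adjoint identity $\int u\cdot\hat{\mathcal L}v\,dm=\int(u\circ f)\cdot v\,dm$ (the latter coming from the conformal measure $\nu$ with $dm=h_0\,d\nu$ and the identity $\mathcal L_\phi((u\circ f)g)=u\mathcal L_\phi g$).

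Since $\psi\in C^{\alpha}(J(f))\subseteq B_\diamond(J)$ has zero $m$-mean, the spectral gap produces the exponential decay $\|\hat{\mathcal L}^k\psi\|_\diamond\lesssim\rho^k$ with $\rho<1$, so $h\defeq-\sum_{k\ge 1}\hat{\mathcal L}^k\psi$ converges in $B_\diamond(J)$; by {\bf (N1)} the sum is continuous (log-H\"older). A direct computation gives $(I-\hat{\mathcal L})h=-\hat{\mathcal L}\psi$, so setting $\xi\defeq\psi-h+h\circ f$ yields $\hat{\mathcal L}\xi=0$. The adjoint identity then forces $\int(\xi\circ f^i)(\xi\circ f^j)\,dm=0$ for $i\ne j$, hence $\int(S_n\xi)^2\,dm=n\int\xi^2\,dm$. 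Writing $S_n\psi=S_n\xi+h-h\circ f^n$ and bounding the cross term by Cauchy-Schwarz as $O(\sqrt n)$ and the telescoping remainder as $O(1)$, one obtains ${\rm Var}(\psi,m)=\int\xi^2\,dm$; the vanishing of the variance then gives $\xi=0$ $m$-a.e.\ on $J(f)$.

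The main obstacle I expect is promoting $\xi=0$ $m$-a.e.\ to $\xi\equiv 0$ on $J(f)$, which is what converts the decomposition into a pointwise coboundary equation. Since $\xi$ is continuous, this reduces to showing that the equilibrium state $m$ has full support on $J(f)$. I would derive this directly from the spectral gap of $\hat{\mathcal L}$ combined with the topological transitivity of $f$ on $J(f)$: if $m$ vanished on a nonempty open set $U\subseteq J(f)$, testing against a continuous nonnegative bump supported in $U$ and using the $B_\diamond$-convergence $\hat{\mathcal L}^n g\to\int g\,dm$ together with $\bigcup_n f^{-n}U\supseteq J(f)$ would contradict $\hat{\mathcal L}^n\mathbf 1=1$.
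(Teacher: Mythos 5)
Your argument is correct in outline and takes a genuinely different route from the paper's. The paper invokes the standard equivalence ``$\mathrm{Var}(\psi,m)=0$ iff $\psi$ is an $L^2(m)$-coboundary'' as a black box, and then upgrades the $L^2(m)$ transfer function to a continuous one via the spectral gap, citing references for the regularity bootstrap. You instead build the Gordin martingale--coboundary decomposition $\psi=\xi+h-h\circ f$ with $h=-\sum_{k\ge1}\hat{\mathcal L}^k\psi\in B_\diamond(J)$ (hence continuous by \textbf{(N1)}) and $\hat{\mathcal L}\xi=0$, so the transfer function is continuous from the outset and the only remaining task is to kill $\xi$. The orthogonality $\int(\xi\circ f^i)(\xi\circ f^j)\,dm=0$ for $i\ne j$, the identity $n^{-1}\int(S_n\xi)^2\,dm=\int\xi^2\,dm$, and the Cauchy--Schwarz control of the cross term are all correct. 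This is more constructive and self-contained than the paper's citation, which is a genuine advantage.

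There is one flaw, in the last step (promoting $\xi=0$ $m$-a.e.\ to $\xi\equiv0$). The inclusion you invoke, $\bigcup_n f^{-n}U\supseteq J(f)$, is false in general: for $f(z)=z^2$ and $U$ a small arc of the unit circle avoiding $1$, the fixed point $1$ never enters $U$ under forward iteration. What you actually need (and what is true for rational maps of degree at least two) is topological exactness of $f|_{J(f)}$: for every nonempty relatively open $U\subseteq J(f)$ there is $N$ with $f^N(U)=J(f)$. With a continuous bump $g\ge0$, $g\not\equiv0$, supported in $U$, this gives $\hat{\mathcal L}^N g>0$ everywhere on $J(f)$, hence $\hat{\mathcal L}^N g\ge c\,\mathbf 1$ for some $c>0$ by compactness; positivity of $\hat{\mathcal L}$ and $\hat{\mathcal L}\mathbf 1=\mathbf 1$ then yield $\hat{\mathcal L}^{N+n}g\ge c\,\mathbf 1$ for all $n$, contradicting the uniform convergence $\hat{\mathcal L}^n g\to(\int g\,dm)\mathbf 1=0$ supplied by the spectral gap and \textbf{(N1)}. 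So the contradiction you aimed for does indeed come via $\hat{\mathcal L}\mathbf 1=\mathbf 1$, but the set-theoretic premise has to be the forward-image (exactness) statement, not the backward-preimage one. Alternatively, full support of $m$ can be read off directly from full support of the conformal eigenmeasure $\nu$ (with $\mathcal L_\phi^*\nu=e^{\mathcal P(\phi)}\nu$) together with strict positivity of the top eigenfunction $h_0$.
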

\begin{proof}
It is a standard fact that we have
${\rm Var}(\psi, m)=0$ if and only if 
$\psi$ is a 
$L^2(m)$-coboundary,
i.e., if
there exists $h\in L^2 (m)$
such that 
$\psi = h-h\circ f$.
In order to conclude the proof, it is enough to show that
a $L^2$-coboundary is a $C^0$-coboundary. 
The proof is a standard application of the spectral gap for the norm $\|\cdot\|_\diamond$, see for instance \cite[Lemma 3.4 and Corollary 3.5]{FMT03}
and \cite[Proposition 5.9]{BD24eq2}.  
\end{proof}

Finally, let $W$ be an open subset of $\mathbb{C}^\ell$ for some $\ell \ge 1$. We say that 
a
map $W \ni w \mapsto \mathcal{L}_w \in L(B_\diamond)$ is {\it holomorphic} if for every $w \in W$, there exists $\mathcal{L}'_w \in L(B_\diamond)$ such that $||
h^{-1}  ( \mathcal{L}_{(w+h)}-\mathcal{L}_w )
-\mathcal{L}'_w||_\diamond \to 0$ as $h\to0$.
The following proposition gives a simple condition to ensure that this is the case 
for the transfer operators
associated to some family of functions $\zeta_\lambda$,
giving an improvement of 
{\bf (N4)}
when the parameter is complex and the dependence of the potential on the parameter is sufficiently regular.

\begin{prop}\label{p:SU-full}
Let $\{\zeta_w\}_{w \in W}$
be a family of functions in $B_\diamond$
such that
\begin{enumerate}
\item there exists $w_0 \in W$
such that
Lyapunov exponent of each equilibrium state of $\zeta_{w_0}$ is strictly positive, 

\item the function $w \mapsto \zeta_w$
is continuous with respect to $\|\cdot\|_\diamond$;
\item for every $z \in J(f)$,
the function $W \ni w \mapsto \zeta_w(z)$ is holomorphic.
\end{enumerate}
Then, the map $W \ni w \mapsto \mathcal{L}_{\zeta_w|_{J(f)}} \in L(B_\diamond (J))$ is holomorphic. 
\end{prop}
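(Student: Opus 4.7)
The plan is to prove the proposition by factoring the map under consideration as a composition of two holomorphic maps
\[
W \xrightarrow{\ w\,\mapsto\,\zeta_w|_{J(f)}\ } B_\diamond(J) \xrightarrow{\ \phi\,\mapsto\,\mathcal L_\phi\ } L(B_\diamond(J)),
\]
and showing the holomorphy of each factor separately. The argument is soft and essentially functorial, relying only on the norm estimates {\bf (N1)}--{\bf (N3)} once they are transported from $B_\diamond$ to $B_\diamond(J)$.

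For the first factor, I would invoke the classical fact (essentially due to Grothendieck) that a locally bounded $X$-valued function on an open subset of $\mathbb C^\ell$, with $X$ a Banach space, is holomorphic as soon as it is weakly holomorphic against a separating family of continuous linear functionals on $X$. Assumption (2) gives continuity (hence local boundedness) of $w\mapsto\zeta_w$ in $\|\cdot\|_\diamond$, and assumption (3) says exactly that $w\mapsto \mathrm{ev}_z(\zeta_w) = \zeta_w(z)$ is holomorphic for every $z\in J(f)$. By {\bf (N1)} one has $\|\cdot\|_{L^\infty}\leq \|\cdot\|_{\log^q}\lesssim \|\cdot\|_\diamond$, so each point evaluation $\mathrm{ev}_z$ is a continuous linear functional on $B_\diamond(J)$, and the family $\{\mathrm{ev}_z:z\in J(f)\}$ obviously separates points of $B_\diamond(J)$. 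The theorem then yields the holomorphy of the first factor.

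For the second factor, using that $f^{-1}(J(f))=J(f)$, I would write $\mathcal L_\phi = f_*\circ M_{e^\phi}$ on $B_\diamond(J)$, where $M_h$ denotes multiplication by $h$. The map $\phi\mapsto e^\phi$ is entire as a map $B_\diamond(J)\to B_\diamond(J)$: extending {\bf (N2)} to $B_\diamond(J)$ via the infimum-over-extensions definition \eqref{eq:def-BJ} gives $\|\phi^k\|_\diamond\leq c^{k-1}\|\phi\|_\diamond^k$, so the power series $\sum_{k\geq 0}\phi^k/k!$ converges absolutely in $B_\diamond(J)$ for every $\phi$. By {\bf (N2)} the map $h\mapsto M_h$ from $B_\diamond(J)$ to $L(B_\diamond(J))$ is bounded linear, and by {\bf (N3)} the push-forward $f_*$ is a bounded operator on $B_\diamond(J)$. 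Composing these three maps, $\phi\mapsto \mathcal L_\phi$ is holomorphic, and the desired conclusion follows.

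The main technical point, and the only obstacle of substance, is to verify that {\bf (N1)}--{\bf (N3)} extend cleanly from $B_\diamond$ to $B_\diamond(J)$: given $g,h\in B_\diamond(J)$ with extensions $\tilde g,\tilde h\in B_\diamond$, the product $\tilde g\tilde h$ extends $gh$, and since $J(f)$ is completely invariant under $f$ the function $f_*\tilde g$ restricts on $J(f)$ to $f_*g$; taking infima over extensions yields the analogues of {\bf (N2)} and {\bf (N3)} on $B_\diamond(J)$. Observe that assumption (1) is not strictly required for holomorphy itself; it ensures, via Lemma \ref{lem_IRL}, that the norm $\|\cdot\|_\diamond$ can be chosen so that the transfer operators appearing in the family actually carry a spectral gap, which is the feature needed in subsequent applications of this proposition.
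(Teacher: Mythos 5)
Your proof is correct and the underlying ingredients are the same as the paper's, but the organization is genuinely different. The paper applies the ``weak holomorphy $+$ local boundedness $\Rightarrow$ strong holomorphy'' criterion (citing \cite[Lemma 7.1]{UZ04real} or \cite[Lemma 5.1]{SU10}, which play the role of your Grothendieck/Dunford--Arendt--Nikolski result) directly at the level of the operator-valued map $w\mapsto\mathcal L_{\zeta_w}$: it first proves norm continuity of $w\mapsto\mathcal L_{\zeta_w}$ by writing $\|(\mathcal L_w-\mathcal L_{w'})g\|_\diamond = \|f_*((e^{\zeta_w}-e^{\zeta_{w'}})g)\|_\diamond$ and then using {\bf (N2)}--{\bf (N3)} plus $e^t-1\lesssim t$, and then combines this continuity with assumption (3) via the cited lemma. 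You instead apply the criterion one level down, to the $B_\diamond(J)$-valued map $w\mapsto\zeta_w|_{J(f)}$, and make the second factor $\phi\mapsto\mathcal L_\phi$ an entire map in its own right via the decomposition $\mathcal L_\phi=f_*\circ M_{e^\phi}$ and the convergent power series for $e^\phi$ coming from {\bf (N2)}. Your approach is more modular and isolates the entire dependence of $\mathcal L_\phi$ on the weight $\phi$, at the cost of having to transport {\bf (N1)}--{\bf (N3)} from $B_\diamond$ to $B_\diamond(J)$, which you correctly identify as the one technical point to check. Your closing remark that assumption (1) is not strictly needed for holomorphy is consistent with the fact that both proofs actually derive boundedness of $\mathcal L_\phi$ directly from {\bf (N2)}--{\bf (N3)}; the paper nevertheless invokes (1) together with Lemma \ref{lem_IRL} at the outset to certify the specific choice of $\|\cdot\|_\diamond$ for which the spectral gap holds, which is the property needed downstream, so you should keep that caveat explicit if you present the proof this way.
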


\begin{proof}
By the first assumption
and
Lemma \ref{lem_IRL},
we have 
$\mathcal L_{\zeta_{w_0}}\in L(B_\diamond)$. 
It is enough to show that the 
 map $W \ni w \mapsto \mathcal{L}_{\zeta_w} \in L(B_\diamond)$ is continuous
 as this, together with the third
 assumption, implies the holomorphicity of $\mathcal L_w$; see for instance \cite[Lemma 7.1]{UZ04real} or \cite[Lemma 5.1]{SU10}.

 In order to show the continuity of $w \mapsto \mathcal L_w$, by the 
 property {\bf (N3)} 
 of the norm $\|\cdot\|_\diamond$, 
 for every $w,w' \in W$
 we have
 \[
\|(\mathcal L_{w}-\mathcal L_{w'}) g\|_\diamond = \|f_*
\big((e^{\zeta_w} - e^{\zeta_{w'}}) g\big)\|_\diamond \lesssim 
\|
(e^{\zeta_w}- e^{\zeta_{w'}})
g\|_\diamond, 
 \]
where the implicit constant is independent of 
$w,w'$.
It follows from the above estimates
and {\bf (N2)}
that we have
\[
\|(\mathcal L_{w}-\mathcal L_{w'})  g\|_\diamond 
\lesssim 
\|e^{\zeta_w}- e^{\zeta_{w'}}\|_\diamond \|g\|_\diamond
\lesssim \|e^{\zeta_w}\|_\diamond
\|1- e^{\zeta_{w'}-\zeta_w}\|_\diamond \|g\|_\diamond
\lesssim
e^{\|\zeta_w\|_\diamond}
\|1- e^{\zeta_{w'}-\zeta_w}\|_\diamond \|g\|_\diamond,
\]
where again the implicit constants are independent of $w, w'$.
The assertion follows from the fact that
$e^{t}-1\lesssim t$ for small $t>0$,
{\bf (N2)},
and the assumption on the continuity of $\zeta_w$ with respect to $\|\cdot\|_\diamond$.
\end{proof}

\subsection{Applications to $\phi = -\theta \log |f'|$} \label{sec_app_log_f'}
When $f$ only has attracting or parabolic periodic points, the function $\phi =- \log |f'|$ is smooth on $J(f)$. Up to redefining it outside of some neighbourhood of $J(f)$,
the results of the previous section apply, up to multiplying
it by a sufficiently small constant (in order to verify the requirement $\max \phi < \mathcal P(f,\phi)$).
We then have the following lemmas, which we will refer to several times in the sequel. They are both well-known (see for example \cite{DU91}), but we give the proofs to show how they can be deduced from the formalism of the previous section.

\begin{lem}\label{lem_Pressure>0}
Let $\theta>0$ be such that  $\mathcal{P}(-\theta\log |f'|)>0$. Then every equilibrium state of
the function $-\theta\log |f'|: J(f) \to \mathbb R$ has strictly positive Lyapunov exponent.
\end{lem}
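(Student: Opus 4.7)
The plan is to combine the variational identity defining an equilibrium state with two classical inputs: Ruelle's inequality and Przytycki's non-negativity of the Lyapunov exponent. Let $m$ be an equilibrium state of $\phi := -\theta \log|f'|$, so by the variational principle
\[
\mathcal P(\phi) = h_m(f) - \theta L(m),
\]
where $L(m) = \int \log|f'|\, dm$ is the Lyapunov exponent of $m$. Since the hypothesis is $\mathcal P(\phi) > 0$, the task is to extract the sign of $L(m)$ from this identity.

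First I would invoke Przytycki's theorem: for any rational map of degree $\geq 2$ on $\mathbb P^1$, every invariant probability measure supported on $J(f)$ satisfies $L(m) \geq 0$, so negative Lyapunov exponents are automatically ruled out. Second, I would use Ruelle's inequality, which in the holomorphic one-dimensional setting reads $h_m(f) \leq 2 \max(L(m), 0)$; the factor $2$ reflects the fact that the holomorphic derivative $f'$ contributes two equal real Lyapunov exponents when $Df$ is viewed as a real-linear endomorphism of the tangent plane.

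To conclude, suppose for contradiction that $L(m) = 0$. Then the displayed identity yields $h_m(f) = \mathcal P(\phi) > 0$, while Ruelle's inequality forces $h_m(f) \leq 2 L(m) = 0$, a contradiction. Combined with Przytycki's bound, this gives $L(m) > 0$, as required. I do not foresee a real obstacle here: both classical estimates apply unconditionally to rational maps of degree $\geq 2$, so the presence of parabolic periodic points in the setting of this paper causes no additional trouble, and no use is made of the spectral gap machinery of Section \ref{ss:operators}.
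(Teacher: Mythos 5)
Your proof is correct and follows essentially the same route as the paper: both start from the variational identity $\mathcal P(\phi)=h_m-\theta L(m)$, invoke Przytycki's theorem to get $L(m)\ge 0$ and hence $h_m\ge\mathcal P(\phi)>0$, and then rule out $L(m)=0$ via an entropy--Lyapunov relation. The only cosmetic difference is in that last step, where the paper cites the dimension formula ${\rm H.dim}(\nu)\cdot L_\nu=h_\nu$ while you use Ruelle's inequality $h_m\le 2\max(L(m),0)$; these are closely related and either one closes the argument.
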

\begin{proof}
We have
 \begin{equation} \label{eq_pressure_positive}
0< \mathcal{P}(-\theta\log|f'|) = h_\nu - \theta L_\nu,
\end{equation}
where $\nu$ is an
equilibrium state of $-\theta\log|f'|$
and $h_\nu$ and $L_\nu$ are  the measure-theoretic entropy and the Lyapunov exponent of $\nu$, respectively. 
Since $\nu$ is supported on the Julia set $J$, its Lyapunov exponent is non-negative, i.e., we have
$L_\nu \ge 0$
\cite{P93}.
Hence,
as $\theta>0$, \eqref{eq_pressure_positive}
implies that $h_\nu>0$. Since the Hausdorff dimension of $\nu$ is non-negative
and satisfies ${\rm H.dim}(\nu) \cdot L_\nu = h_\nu$, this gives
$L_\nu>0$.     
\end{proof}

\begin{lem}\label{lem_pressure_decreasing}
The function $\theta \mapsto \mathcal{P}(-\theta \log |f'|)$ is strictly decreasing as $\theta$ increases from 0 to the first zero of the function $\mathcal{P}(-\theta\log |f'|)$.
\end{lem}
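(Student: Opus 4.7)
The plan is to differentiate the pressure function in $\theta$ and show the derivative is strictly negative on the stated interval. First I would observe that $\mathcal P(0)=h_{\mathrm{top}}(f)=\log D>0$, so the first zero $\theta_*$ (if it exists) is strictly positive, and by continuity of $\theta\mapsto \mathcal{P}(-\theta\log|f'|)$ (immediate from the variational principle) we have $\mathcal{P}(-\theta\log|f'|)>0$ for every $\theta\in[0,\theta_*)$.

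For such $\theta$, I would verify the hypotheses of Proposition \ref{prop_deripressure} applied to the one-parameter family
\[
\phi_t \defeq -(\theta+t)\log|f'|, \qquad t\in(-\varepsilon,\varepsilon),
\]
which is a smooth path in $C^\alpha(J(f),\mathbb R)$ because, in the setting of Section \ref{sec_app_log_f'}, the critical points of $f$ lie outside $J(f)$ and hence $\log|f'|$ is (even real-analytic and in particular) Hölder continuous on $J(f)$. By Lemma \ref{lem_Pressure>0}, the positivity of $\mathcal{P}(-\theta\log|f'|)$ forces every equilibrium state of $\phi_0=-\theta\log|f'|$ to have strictly positive Lyapunov exponent. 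Then Lemma \ref{lem_IRL} furnishes a $C^\alpha$-extension of $\phi_0$ to $\mathbb P^1$ whose transfer operator on $B_\diamond$ has a spectral gap; by property \textbf{(N5)} this yields a \emph{unique} equilibrium state $m_\theta$, and the remaining hypothesis of Proposition \ref{prop_deripressure} is satisfied.

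Proposition \ref{prop_deripressure} then gives
\[
\frac{d}{dt}\mathcal{P}(\phi_t)\bigg|_{t=0}
= \int_{J(f)} \dot\phi_0\, dm_\theta
= -\int_{J(f)} \log|f'|\, dm_\theta
= -L_{m_\theta},
\]
and we have already observed that $L_{m_\theta}>0$. Hence the derivative of $\theta\mapsto\mathcal{P}(-\theta\log|f'|)$ is strictly negative at every $\theta\in[0,\theta_*)$, so the function is strictly decreasing on $[0,\theta_*]$ by continuity.

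I do not foresee a serious obstacle: the argument is a direct application of the derivative formula from Proposition \ref{prop_deripressure} together with Lemmas \ref{lem_Pressure>0} and \ref{lem_IRL}. The only mild technical point is ensuring that $\phi_0$ satisfies the hypotheses of Proposition \ref{prop_deripressure}, and the chain \emph{positive pressure $\Rightarrow$ positive Lyapunov exponent $\Rightarrow$ spectral gap $\Rightarrow$ unique equilibrium state} handles this cleanly.
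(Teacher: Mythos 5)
Your proposal is correct and follows essentially the same route as the paper: apply Proposition \ref{prop_deripressure} to the path $\phi_s = -(\theta_0+s)\log|f'|$, invoke Lemmas \ref{lem_Pressure>0} and \ref{lem_IRL} (via positivity of the pressure) to obtain a unique equilibrium state with strictly positive Lyapunov exponent, and read off that the derivative equals $-L_\nu<0$. The extra remarks (that $\mathcal P(0)=\log D>0$ and that $\log|f'|$ is Hölder on $J(f)$ in this setting) are implicit in the paper's proof but harmlessly spelled out.
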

\begin{proof}
Take $\theta_0\in [0,T)$, where
$T>0$ is 
the first zero of the function $\theta \mapsto \mathcal{P}(-\theta\log |f'|)$.
We apply Proposition \ref{prop_deripressure} with $\phi_s = -(\theta_0+s)\log|f'|$, 
for $s$ in a sufficiently small neighbrouhood of $0$. Observe that
$\phi_0 = -\theta_0\log |f'|$.
Since $\mathcal{P}(\phi_0)>0$, by Lemmas \ref{lem_Pressure>0}
and \ref{lem_IRL}
$\phi_0$ has a unique equilibrium state $\nu=\nu(\phi_0)$, whose Lyapunov exponent
$L_\nu$
is strictly positive.

By \eqref{eq:first-derivative}, we have
\begin{equation*}
\frac{d\mathcal{P}(-\theta\log |f'|)}{dt} \bigg|_{\theta=\theta_0} =
\frac{d\mathcal{P}(\phi_s)}{ds} \bigg|_{s=0} = -\int_{J(f)} \log |f'| d\nu = -L_\nu<0.
\end{equation*} 
The conclusion follows.
\end{proof}

By 
\cite{StratU}, 
the pressure function $t \mapsto \mathcal{P}(-t\log|f'|)$ is actually
analytic on the interval between $0$ and its first zero. We will reprove this fact in Section \ref{sec_AnalyticityBowenParameter}, where we will show a stronger
joint analyticity property
for the pressure function in both $t$ and $f$.

\subsection{Extension of analytic functions}\label{ss:extension-analytic}
In later sections, we will
often need to extend a family of transfer operators, depending analytically on some real parameters, to a {\it holomorphic} family of transfer operators, in order to apply the perturbation theory on the spectra of the operators. To achieve this, we will use
Lemma \ref{lem_6.3} below.
We also
refer to \cite{Rugh08dimension} for a different method of extension.

\medskip

For every integer $d \ge 1$, consider the embedding $\iota_d \colon \mathbb{C}^d \to \mathbb{C}^{2d}$ given by 
\begin{equation}\label{eq_iota}
(x_1+iy_1,\ldots,x_d+iy_d) \mapsto (x_1,y_1,\ldots,x_d,y_d).
\end{equation}
Observe, in particular,
that $\mathbb C^d$ is embedded by $\iota_d$ in $\mathbb C^{2d}$ as the set of points of real coordinates
(which, in turn, is 
parametrized by $\mathbb C^d$ by means of $\iota_d$). 
For every $z \in \mathbb{C}^\ell$
and every $r>0$, denote by $D_\ell(z,r)$ 
the 
$\ell$-dimensional 
polydisk in $\mathbb{C}^\ell$
centered at $z$ and with radius $r$.
Observe that, with the above identification, we 
have $\iota_d (D_d (0,r))\subset D_{2d} (0,r)$ and, more generally,
$\iota_d (D_d (z,r))\subset D_{2d} (\iota_d(z),r)$
for every $z \in \mathbb C^d$. 

\begin{lem}[{\cite[Lemma 6.4]{SU10}}] \label{lem_6.3}
For every $M \ge 0$, 
$R>0$, 
 $\lambda_0 \in \bbC^d$, and 
 every complex analytic function
$\phi: D_d(\lambda_0,R) \to \bbC$
which is bounded in modulus by $M$, there exists a complex analytic function $\tilde{\phi} : D_{2d}(\iota_d (\lambda_0),R/4) \to \bbC$ that is bounded in modulus by $4^dM$ and such that the restriction of $\tilde \phi \circ \iota_d$ 
to
$D_d(\lambda_0,R/4)$ 
coincides with the real part $\Re(\phi)$ of $\phi$.
\end{lem}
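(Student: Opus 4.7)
The plan is to construct $\tilde\phi$ explicitly via the power series of $\phi$ by the classical polarization trick, treating the real and imaginary parts of each coordinate as independent complex variables.

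First, I would write $\lambda_{0,j} = x_{0,j} + i y_{0,j}$ for $j=1,\dots,d$, and expand $\phi$ as a convergent power series on $D_d(\lambda_0, R)$,
\[
\phi(z) = \sum_\alpha a_\alpha (z - \lambda_0)^\alpha,
\]
where $\alpha = (\alpha_1,\dots,\alpha_d)$ runs over multi-indices in $\mathbb{N}^d$. The iterated Cauchy estimates on the polydisk give $|a_\alpha| \le M R^{-|\alpha|}$ for every $\alpha$.

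Next, I would define, for $(X,Y) = (X_1,Y_1,\dots,X_d,Y_d) \in \mathbb{C}^{2d}$ close to $\iota_d(\lambda_0)$,
\[
\tilde\phi(X,Y) \defeq \tfrac{1}{2}\sum_\alpha a_\alpha \prod_{j=1}^d \bigl((X_j - x_{0,j}) + i(Y_j - y_{0,j})\bigr)^{\alpha_j} + \tfrac{1}{2}\sum_\alpha \bar a_\alpha \prod_{j=1}^d \bigl((X_j - x_{0,j}) - i(Y_j - y_{0,j})\bigr)^{\alpha_j}.
\]
Each summand is a polynomial, hence holomorphic in $(X,Y) \in \mathbb{C}^{2d}$; once convergence on the polydisk $D_{2d}(\iota_d(\lambda_0), R/4)$ is established, $\tilde\phi$ will be holomorphic there. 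Moreover, when $X_j$ and $Y_j$ are all \emph{real}, the second sum is the complex conjugate of the first, and the first sum equals $\phi(X_1+iY_1,\dots,X_d+iY_d)$, so
\[
\tilde\phi \circ \iota_d(z) = \Re\,\phi(z) \qquad \text{on } D_d(\lambda_0, R/4).
\]

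It only remains to check absolute convergence together with the $L^\infty$ bound. For $(X,Y) \in D_{2d}(\iota_d(\lambda_0), R/4)$ one has $|X_j - x_{0,j}| \le R/4$ and $|Y_j - y_{0,j}| \le R/4$, so by the triangle inequality $|(X_j - x_{0,j}) \pm i(Y_j - y_{0,j})| \le R/2$. Combined with the Cauchy estimate this yields
\[
|\tilde\phi(X,Y)| \le \sum_\alpha |a_\alpha| (R/2)^{|\alpha|} \le M \sum_\alpha 2^{-|\alpha|} = M \prod_{j=1}^d \Bigl(\sum_{k\ge 0} 2^{-k}\Bigr) = 2^d M \le 4^d M,
\]
which is even slightly stronger than the bound in the statement. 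There is no real obstacle in the argument; the only point requiring care is the polydisk bookkeeping that converts the radius $R/4$ in the real/imaginary coordinates $(X_j,Y_j)$ into the radius $R/2$ in the complexified coordinates $z_j = X_j + iY_j$, keeping the series comfortably inside the original domain of convergence and giving a uniform geometric estimate on its tail.
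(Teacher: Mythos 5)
The paper does not reproduce a proof of this lemma; it is imported wholesale from \cite[Lemma 6.4]{SU10}. Your argument is the standard ``complexification by polarization'' proof of that result and it is correct: the Cauchy estimates on the polydisk give $|a_\alpha|\le M R^{-|\alpha|}$, the two polarized series each converge absolutely for $|X_j-x_{0,j}|,|Y_j-y_{0,j}|<R/4$ since $|(X_j-x_{0,j})\pm i(Y_j-y_{0,j})|<R/2$, the partial sums are polynomials so the uniform limit $\tilde\phi$ is holomorphic, and restriction to real $(X,Y)$ recovers $\tfrac12(\phi+\overline{\phi})=\Re\phi$. Your bound $|\tilde\phi|\le 2^d M$ is in fact slightly sharper than the $4^d M$ asserted in the statement (the extra slack in \cite{SU10} presumably comes from coarser polydisk bookkeeping), so the proposal proves the lemma as stated.
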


\section{Analyticity of the Bowen function} \label{sec_AnalyticityBowenParameter}
We fix in this section a parabolic subfamily $\Lambda$ of ${\rm rat}^{cm}_D$. Recall that this implies that
a 
(possibly empty) subset of the critical points is persistently contained in parabolic basins. In particular, these critical points are passive on all of $\Lambda$. 
We also fix a $\Lambda$-hyperbolic
component $\Omega$ in $\Lambda$.
Recall
that we denote by $J_\lambda$
the Julia set of $f_\lambda$.
Observe also that, for every $\lambda \in \Omega$, the map
$\log|f'_\lambda|$ is
 H\"older continuous in a neighbourhood of
$J_\lambda$.

\medskip

Fix $\eta \in (0, \log D)$. Given $\lambda\in\Omega$, define $\delta_\eta(\lambda)$ to be the
unique
real
number such that
$$\mathcal{P}(-\delta_\eta(\lambda)\log|f'_\lambda|) = \eta.$$
Note that the number $\delta_\eta(\lambda)$ exists and is unique as
the pressure function $t \mapsto \mathcal{P}(-t\log|f'_\lambda|)$ is strictly decreasing
as $t$ increases from $0$ up to the first zero of $\mathcal{P}$ by Lemma \ref{lem_pressure_decreasing}. Moreover, we have $0< \delta_\eta(\lambda) \leq 2$ for every $0<\eta<\log D$ and $\lambda\in\Omega$.
 If $\Lambda$ and $\Omega$ are such that $f_\lambda$ is hyperbolic for every $\lambda \in \Omega$
 and we take $\eta=0$,
 the number $\delta_0(\lambda)$ is called the {\it Bowen parameter}, as in this case,
Bowen proved that $\delta_0(\lambda)$ equals the Hausdorff dimension of the Julia set of $f_\lambda$; see \cite{Bowen79}.
In our case, abusing notation, we 
call $\delta_\eta(\lambda)$ the Bowen
 number
 of $f_\lambda$.
The main result of this section is
the following proposition, which generalizes Ruelle's result \cite{Ruelle82} to every
$\Lambda$-hyperbolic component.

\begin{prop} \label{prop_deltaanalytic}
The 
Bowen 
function
$\delta_{\eta}: \Omega \to \mathbb{R}$ given by $\lambda \mapsto \delta_\eta(\lambda)$ is real-analytic.
\end{prop}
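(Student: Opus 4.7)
The plan is to reduce real-analyticity of $\delta_\eta$ to an application of the analytic implicit function theorem to the equation $\mathcal P(-\delta\log|f'_\lambda|)=\eta$, after complexifying $\lambda$ via Lemma \ref{lem_6.3} and controlling the resulting family of transfer operators through Proposition \ref{p:SU-full}. Fix $\lambda_0\in\Omega$, pick local coordinates identifying a neighborhood with a polydisk $D_d(\lambda_0,R)\subset \mathbb C^d$, and set $\delta_0\defeq\delta_\eta(\lambda_0)$. On a smaller neighborhood a holomorphic motion $\Psi_\lambda\colon J_{\lambda_0}\to J_\lambda$ is available, so I transfer the problem to the fixed Julia set $J_{\lambda_0}$ by considering
\[\varphi_{\lambda,\delta}(z)\defeq -\delta\log\bigl|f'_\lambda(\Psi_\lambda(z))\bigr|,\qquad z\in J_{\lambda_0}.\]
Since $J_\lambda$ contains no critical point (the active critical points are in attracting or parabolic basins), $f'_\lambda(\Psi_\lambda(z))$ is nowhere zero and locally admits a holomorphic logarithm in $\lambda$; thus $\log|f'_\lambda(\Psi_\lambda(z))|$ is the real part of a holomorphic function of $\lambda$, uniformly bounded in $z\in J_{\lambda_0}$ for $\lambda$ in a relatively compact subset of $\Omega$.

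Applying Lemma \ref{lem_6.3} pointwise in $z$ then yields, on a polydisk $W\subset \mathbb C^{2d}$ in the doubled parameter, a function $\widetilde\varphi_{w,\delta}(z)$ that is holomorphic in $(w,\delta)$ for each $z$, uniformly bounded, and that restricts on the real slice $\iota_d(D_d(\lambda_0,R/4))$ to $\varphi_{\lambda,\delta}$. The next step is to verify the hypotheses of Proposition \ref{p:SU-full} for the family $\{\widetilde\varphi_{w,\delta}\}$: pointwise holomorphicity in $(w,\delta)$ is by construction; continuity in the $\|\cdot\|_\diamond$-norm follows from uniform H\"older control of $\Psi_\lambda$ and of $\log|f'_\lambda|$ on a neighborhood of $J_\lambda$ (standard consequences of the $\lambda$-lemma, combined with the extension of Lemma \ref{l:extension} and property \textbf{(N1)}); finally, at the base point we have $\mathcal P(\varphi_{\lambda_0,\delta_0})=\eta>0$, so Lemma \ref{lem_Pressure>0} supplies the positive Lyapunov exponent needed to invoke Lemma \ref{lem_IRL}. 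Proposition \ref{p:SU-full} then produces a holomorphic family $(w,\delta)\mapsto \mathcal L_{\widetilde\varphi_{w,\delta}}\in L(B_\diamond(J_{\lambda_0}))$ on a neighborhood of $(\iota_d(\lambda_0),\delta_0)$.

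With this holomorphic family of operators in hand, the spectral gap from \textbf{(N5)} together with Kato's analytic perturbation theory for isolated simple eigenvalues gives that the leading eigenvalue $e^{\mathcal P(\widetilde\varphi_{w,\delta})}$, and hence the pressure $\mathcal P(\widetilde\varphi_{w,\delta})$, is holomorphic on a neighborhood $W'\times V$ of $(\iota_d(\lambda_0),\delta_0)$. On the real slice this recovers $\mathcal P(-\delta\log|f'_\lambda|)$, whose $\delta$-derivative at $\delta_0$ equals $-L_{\nu}<0$ by Proposition \ref{prop_deripressure} and Lemma \ref{lem_Pressure>0}. The complex-analytic implicit function theorem therefore yields a holomorphic function $\widetilde\delta_\eta$ on a neighborhood of $\iota_d(\lambda_0)$ satisfying the pressure equation; by uniqueness its restriction to the real slice must coincide with $\delta_\eta$, which is thus real-analytic at $\lambda_0$. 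Since $\lambda_0$ was arbitrary, this proves Proposition \ref{prop_deltaanalytic}.

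The step I expect to be the main obstacle is the upgrade from the pointwise-in-$z$ holomorphic extension provided by Lemma \ref{lem_6.3} to norm-continuity in the $\|\cdot\|_\diamond$-topology required by Proposition \ref{p:SU-full}. This reduces to a uniform modulus-of-continuity statement for $\Psi_\lambda$ and $\log|f'_\lambda|$ on a neighborhood of $J_\lambda$, which is delicate near the persistent parabolic cycles where $\log|f'_\lambda|$ vanishes; it is precisely at this point that the flexibility of $\|\cdot\|_\diamond$ allowing $q$-$\log$-H\"older moduli, and the extension machinery of Section \ref{ss:operators}, become essential.
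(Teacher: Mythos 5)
Your proposal matches the paper's proof essentially step for step: transfer to the fixed Julia set $J_{\lambda_0}$ via the holomorphic motion, complexify the real parameter via Lemma \ref{lem_6.3}, check the three hypotheses of Proposition \ref{p:SU-full} to obtain a holomorphic family of transfer operators, invoke the spectral gap and Kato–Rellich perturbation theory to get joint analyticity of the pressure, and finish with the analytic implicit function theorem using $\partial_\delta\mathcal P=-L_\nu<0$. The only cosmetic difference is that the paper normalizes the logarithm by writing $\psi_z(\lambda)=f'_\lambda(\Psi_\lambda(z))/f'_0(z)$ so that a single uniformly bounded branch of $\log\psi_z$ (with $\log\psi_z(\lambda_0)=0$) can be chosen simultaneously for all $z$ before applying Lemma \ref{lem_6.3}; your version takes the logarithm of $f'_\lambda\circ\Psi_\lambda$ directly and asserts a uniform bound, which is fine but requires a word about choosing consistent branches across $z$, precisely the issue the paper's normalization is designed to dispatch.
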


\medskip

Fix $\lambda_0 \in \Omega$ and denote 
$\ell\defeq {\rm dim}_{\mathbb C} \Omega$.
In the following, we will work in a chart of $\Omega$ centred at $\lambda_0$. In particular, we can assume that $\lambda_0=0$ and $\lambda \in D_\ell (0,1)$.
Let $R_0<1$ be a real number such that the 
map $\Psi_\lambda:J_0\to J_\lambda$
conjugating the dynamical systems $(J_{0},f_{0})$ and $(J_\lambda,f_\lambda)$
is well-defined for all $\lambda\in D_\ell (0,R_0)$.
For $\lambda \in D_\ell (0,R_0)$ and  $\theta \in \bbR$, consider the potential function $\phi_{(\theta,\lambda)} : J_{0} \to \mathbb{R}$ given by
\begin{equation*}
    \phi_{(\theta,\lambda)}(z) \defeq -\theta \log \left| f'_{\lambda} \circ \Psi_\lambda (z) \right|.
\end{equation*}

Proposition \ref{prop_deltaanalytic} follows from the following proposition.
Indeed, once we establish that the function $(\theta,\lambda) \mapsto \mathcal{P}(\phi_{(\theta,\lambda)})$ is real-analytic in a neighbourhood of $(\delta_\eta (0),0)$,
since $\{\eta\}$ is a closed set in $\mathbb{R}$ and we have $\frac{d}{d\theta}\big|_{\theta= \delta_\eta(\lambda)}\mathcal{P}(-\theta\log|f'_\lambda|)<0$ for every $\lambda$
by Lemma \ref{lem_pressure_decreasing}, 
it follows from the analytic
implicit function theorem that
the function 
$\lambda \mapsto \delta_\eta(\lambda)$
is analytic on $\Omega$.

\begin{prop} \label{prop_analytic_Mis}
There exists $R>0$ such that the function $(\theta,\lambda) \mapsto \mathcal{P}(\phi_{(\theta,\lambda)})$
 is real-analytic 
 for 
 $ (\theta,\lambda) \in (\delta_\eta(0)-R,\delta_\eta(0)+R)\times D_\ell (0, R)$.
\end{prop}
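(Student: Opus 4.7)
The plan is to complexify the parameters $(\theta,\lambda)$, construct a family of potentials on the complexified parameter space which extends $\phi_{(\theta,\lambda)}$, use Proposition \ref{p:SU-full} to show that the corresponding transfer operators form a holomorphic family in $L(B_\diamond(J_0))$, and then apply standard spectral perturbation theory to the isolated simple eigenvalue at the base point $(\delta_\eta(0),0)$ to deduce real-analyticity of the pressure on the real slice.

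Concretely, for any fixed $z\in J_0$, the map $\lambda\mapsto \Psi_\lambda(z)$ is holomorphic on $D_\ell(0,R_0)$ by the $\lambda$-lemma, and since $\Omega$ is $\Lambda$-hyperbolic the Julia set $J_\lambda$ contains no critical points of $f_\lambda$, so $f'_\lambda\circ\Psi_\lambda(z)$ is holomorphic and non-vanishing in $\lambda$. Hence, locally in $\lambda$, we have a well-defined holomorphic branch $F_z(\theta,\lambda) \defeq -\theta \log(f'_\lambda\circ\Psi_\lambda(z))$ whose real part equals $\phi_{(\theta,\lambda)}(z)$. Viewing $D_\ell(0,R_0)\subset\mathbb{C}^\ell$ as a real $2\ell$-dimensional open set embedded into $\mathbb{C}^{2\ell}$ via $\iota_\ell$, I apply Lemma \ref{lem_6.3} pointwise in $z$ to extend $\Re F_z$ to a holomorphic function $\widetilde{F}_z$ on a complex polydisk $D_1(\delta_\eta(0),R')\times D_{2\ell}(0,R')$, where the $\theta$-direction is complexified trivially since the $\theta$-dependence is linear. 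Setting $\widetilde{\phi}_{(\tau,w)}(z)\defeq \widetilde{F}_z(\tau,w)$ then produces a family of potentials which, for real parameters $(\tau,w)=(\theta,\iota_\ell(\lambda))$, reduces to $\phi_{(\theta,\lambda)}$.

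I then verify the hypotheses of Proposition \ref{p:SU-full} for the family $(\tau,w)\mapsto \widetilde{\phi}_{(\tau,w)}$. Pointwise holomorphicity is built into the construction. Strict positivity of the Lyapunov exponent of the equilibrium state at the base point follows from $\mathcal{P}(-\delta_\eta(0)\log|f'_0|)=\eta>0$ via Lemma \ref{lem_Pressure>0}. Continuity in the norm $\|\cdot\|_\diamond$ follows, via {\bf (N1)} and Lemma \ref{l:extension}, from uniform $\beta$-H\"older control of the maps $z\mapsto \widetilde{F}_z(\tau,w)$ on $J_0$; this uniform control is obtained by combining the standard uniform H\"older regularity of the holomorphic motion $\Psi_\lambda$, the holomorphic dependence of $f'_\lambda$ on $\lambda$ near $J_0$, and the explicit formula for the extension in Lemma \ref{lem_6.3}. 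Proposition \ref{p:SU-full} then yields a holomorphic family $(\tau,w)\mapsto \mathcal{L}_{\widetilde{\phi}_{(\tau,w)}|_{J_0}}$ in $L(B_\diamond(J_0))$. At the base point, Lemma \ref{lem_IRL} together with {\bf (N5)} provides a spectral gap with isolated simple eigenvalue $e^\eta$. Standard spectral perturbation theory for holomorphic families of bounded operators with isolated simple eigenvalues produces a holomorphic function $(\tau,w)\mapsto \Lambda(\tau,w)$ on a complex neighborhood of the base point, whose value is the corresponding isolated simple eigenvalue of $\mathcal{L}_{\widetilde{\phi}_{(\tau,w)}}$ and whose restriction to the real slice equals $e^{\mathcal{P}(\phi_{(\theta,\lambda)})}$. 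Since $\Lambda(\delta_\eta(0),\iota_\ell(0))=e^\eta\neq 0$, the function $\log \Lambda$ is holomorphic on a smaller neighborhood, and restricting to the real slice yields the claimed real-analyticity of $(\theta,\lambda)\mapsto\mathcal{P}(\phi_{(\theta,\lambda)})$.

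I expect the main technical obstacle to be the verification of the continuity of the extended family $(\tau,w)\mapsto \widetilde{\phi}_{(\tau,w)}$ in the norm $\|\cdot\|_\diamond$. The extension from Lemma \ref{lem_6.3} is only a pointwise-in-$z$ construction, so one must propagate uniform H\"older bounds in $z$ jointly with the complexified parameters $(\tau,w)$, in the spirit of the joint H\"older estimate established in Lemma \ref{l:extension}. Once this uniformity is in hand, the application of Proposition \ref{p:SU-full} and the subsequent perturbation argument are essentially routine.
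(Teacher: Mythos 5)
Your proposal follows essentially the same route as the paper: complexify $\lambda$ via the McShane–Whitney-type extension of Lemma \ref{lem_6.3} applied to the real part of a holomorphic logarithm of $f'_\lambda\circ\Psi_\lambda(z)$, complexify $\theta$ trivially by linearity, extend off $J_0$ with Lemma \ref{l:extension}, apply Proposition \ref{p:SU-full} to get a holomorphic operator family, and invoke Kato–Rellich perturbation theory for the simple isolated eigenvalue. The only cosmetic difference is that the paper normalizes by $f'_0(z)$ (working with $\psi_z(\lambda)=f'_\lambda\circ\Psi_\lambda(z)/f'_0(z)$, so that $|\psi_z-1|<1/5$ gives a uniform canonical branch and a small uniform modulus bound $M$ for Lemma \ref{lem_6.3}), whereas you apply the extension directly to $-\theta\log(f'_\lambda\circ\Psi_\lambda(z))$, which still works but requires one to note a uniform-in-$z$ modulus bound and that the extension of Lemma \ref{lem_6.3} depends only on the real part, so the $z$-dependent branch ambiguity is harmless.
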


The rest of this section is devoted to the proof of Proposition \ref{prop_analytic_Mis}. Our proof follows the same general outline as that of \cite[Proof of Theorem A]{SU10} or \cite[Proof of Theorem 9.3]{UZ04real}.
The spectral gap property for the transfer operators $\mathcal L_{\phi(\theta,\lambda)}$
with respect to the norm $\|\cdot\|_\diamond$
as in Section \ref{ss:operators} will allow us to 
adapt those arguments in 
our context.

\medskip

For every $z \in J_{0}$, consider the map
$$\psi_z(\lambda) \defeq \frac{f'_\lambda \circ \Psi_\lambda(z)}{f'_{0}(z)}.$$
It follows from the absolute continuity of the holomorphic motion of the Julia sets
that,
shrinking $R_0$ if necessary, for all $z \in J_{0}$ 
and $\lambda\in D_\ell (0,R_0)$, 
we have
$$|\psi_z(\lambda)-1|<1/5.$$
Then,
for every $z \in J_{0}$, there exists a branch of $\log\psi_z$ 
sending 
$0$ to $0$ and whose modulus is bounded by $1/4$.
Applying Lemma \ref{lem_6.3} to the complex analytic function $\log \psi_z$, we see that the real analytic function
 $\Re\log \psi_z : D_\ell (0,R_0) \to \mathbb{R}$ has an analytic extension $\Re \widetilde{\log \psi_z} : D_{2\ell} (0,R') \to \mathbb{R}$ for some $R' \in (0,R_0)$. 
Recall that $D_\ell (0,R_0)$ is seen
as a subset of the points
of  
$\mathbb C^{2\ell}$
with real coordinates
by means of the immersion $\iota_\ell$ as in \eqref{eq_iota}, and that we have
$\iota_\ell (0)=0\in \mathbb C^{2\ell}$.

\medskip

For $(\theta,\lambda) \in \mathbb{C} \times  D_{2\ell}(0,R')$,
define $\zeta_{(\theta,\lambda)}:J_{0} \to \mathbb C$ by
$$\zeta_{(\theta,\lambda)}(z) \defeq -\theta \Re \widetilde{\log \psi_z}(\lambda) + \theta \log|f'_{0}(z)|.$$
Note that $\zeta_{(\theta,\lambda)}=
\phi_{(\theta,\lambda)}$ for
every
$(\theta,\lambda) \in \mathbb{R} \times D_\ell (0,R')$.

\medskip

Let $\beta$ be such that the conjugacy map $\Psi_\lambda : J_{0} \to J_\lambda$ is $\beta$-H\"older continuous for all $\lambda \in D_\ell (0,R')$. We first note that the map 
$(\theta,\lambda) \mapsto \zeta_{(\theta,\lambda)}$ is continuous with respect to the $\beta$-H\"older norm
(see for instance \cite[Lemma 6.6]{SU10} for a similar computation).
Applying
Lemma \ref{l:extension} to the  families $\Re \zeta_{(\theta,\lambda)}$ 
and
$\Im \zeta_{(\theta,\lambda)}$, and
shrinking $R'$, gives an extended family
$\mathbb{C}\times D_{2\ell} (0,R') \ni (\theta,\lambda) \mapsto \widetilde{\zeta}_{(\theta,\lambda)} : \mathbb P^1 \to \mathbb C$ which is
continuous with respect to the $\beta/2$-H\"older norm and satisfies
$ \widetilde{\zeta}_{(\theta,\lambda)} (z)=
{\zeta}_{(\theta,\lambda)} (z)$
for every 
$(\theta,\lambda) \in \mathbb{C} \times \iota_1^{-1} D_{2\ell}(0,R')$ and $z\in J_0$. 

\medskip

Fix a norm 
$||\cdot||_\diamond$ 
as in Section \ref{ss:operators}
such that $||\cdot||_\diamond \lesssim ||\cdot||_{C^{\beta/2}}$ 
and recall that we denote by
$B_\diamond (J_0)$
the
Banach space defined by the norm $||\cdot||_\diamond$
as in \eqref{eq:def-BJ}, and by $L(B_\diamond (J_0))$ the space of the bounded
linear operators on $(B_\diamond (J_0), \|\cdot\|_\diamond)$.
For $(\theta,\lambda) \in \mathbb{C}\times D_{2\ell} (0,R')$, consider the complex transfer operator $\mathcal{L}_{(\theta,\lambda)} \defeq \mathcal{L}_{\widetilde \zeta_{(\theta,\lambda)}}$.

\begin{lem} \label{lem_op_hol}
There exists $R'' \in (0,R')$ such that the map 
$D_\ell (\delta_\eta(0) , R'') \times  D_{2\ell} (0,R'')
\ni (\theta,\lambda) \mapsto \mathcal{L}_{(\theta,\lambda)}\in L(B_\diamond(J))$ is holomorphic. 
\end{lem}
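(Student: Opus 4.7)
The plan is to apply Proposition \ref{p:SU-full} to the family $\{\widetilde\zeta_{(\theta,\lambda)}\}_{(\theta,\lambda) \in W}$ with $W \defeq D_1(\delta_\eta(0),R'') \times D_{2\ell}(0,R'')$ for some $R'' \in (0,R')$ sufficiently small, with distinguished point $w_0 \defeq (\delta_\eta(0),0)$. Three hypotheses need to be checked: positive Lyapunov exponent for every equilibrium state of $\widetilde\zeta_{w_0}$, continuity of the family in $\|\cdot\|_\diamond$, and pointwise holomorphicity in $(\theta,\lambda)$ for each fixed $z\in J_0$.

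\medskip

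For the first hypothesis, I would observe that $\psi_z(0)=1$ for every $z\in J_0$, so that $\Re\widetilde{\log\psi_z}(0)=0$. Unpacking the definition of $\zeta$, this means that $\widetilde\zeta_{w_0}$ restricted to $J_0$ coincides with the Bowen-type potential $-\delta_\eta(0)\log|f'_0|$, whose pressure is exactly $\eta>0$ by the defining property of $\delta_\eta(0)$. Lemma \ref{lem_Pressure>0} then yields strict positivity of the Lyapunov exponent for every equilibrium state, and Lemma \ref{lem_IRL} gives $\mathcal L_{w_0}\in L(B_\diamond)$.

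\medskip

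For the second hypothesis, continuity of $(\theta,\lambda)\mapsto \widetilde\zeta_{(\theta,\lambda)}$ in $\|\cdot\|_\diamond$ was recorded immediately before the statement of the lemma: Lemma \ref{l:extension} produced a family continuous in $\|\cdot\|_{C^{\beta/2}}$, and property \textbf{(N1)} together with the choice $\|\cdot\|_\diamond \lesssim \|\cdot\|_{C^{\beta/2}}$ upgrades this to continuity in $\|\cdot\|_\diamond$. For the third hypothesis, I would fix $z\in J_0$ and observe that
\[
\widetilde\zeta_{(\theta,\lambda)}(z) = \zeta_{(\theta,\lambda)}(z) = -\theta\,\Re\widetilde{\log\psi_z}(\lambda) + \theta\log|f'_0(z)|
\]
is affine in $\theta\in\bbC$ with coefficients that are holomorphic in $\lambda\in D_{2\ell}(0,R')$, the holomorphicity in $\lambda$ being precisely what Lemma \ref{lem_6.3} was invoked to produce. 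The expression is therefore jointly holomorphic on $\bbC\times D_{2\ell}(0,R')$. Choosing any $R''\in(0,R')$ so that $W\subset \bbC \times D_{2\ell}(0,R')$, Proposition \ref{p:SU-full} then yields the desired holomorphicity of $(\theta,\lambda)\mapsto \mathcal L_{(\theta,\lambda)}\in L(B_\diamond(J_0))$ on $W$.

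\medskip

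The bulk of the work for this lemma has in fact been done upstream, in setting up the family $\widetilde\zeta_{(\theta,\lambda)}$ and fixing an appropriate norm $\|\cdot\|_\diamond$, so what remains is essentially a verification. The only conceptually substantive point, hidden in the third hypothesis, is that the a priori merely \emph{real}-analytic dependence of $\log|f'_\lambda\circ\Psi_\lambda(z)|$ on the real parameter $\lambda$ has been replaced by a genuinely holomorphic dependence on a complex variable of twice the real dimension; this is exactly what Lemma \ref{lem_6.3} provides, and without that device Proposition \ref{p:SU-full} would not apply.
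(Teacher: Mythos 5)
Your proof is correct and follows the same route as the paper: verifying the three hypotheses of Proposition \ref{p:SU-full} for the extended family $\widetilde\zeta_{(\theta,\lambda)}$, with hypothesis (1) checked at $w_0=(\delta_\eta(0),0)$ via Lemma \ref{lem_Pressure>0}, hypothesis (2) via Lemma \ref{l:extension} and the choice of $\|\cdot\|_\diamond \lesssim \|\cdot\|_{C^{\beta/2}}$, and hypothesis (3) from the holomorphicity built into the definition of $\zeta_{(\theta,\lambda)}$ via Lemma \ref{lem_6.3}. One small inconsistency to flag: you reproduce the paper's displayed formula $\zeta_{(\theta,\lambda)}(z)=-\theta\Re\widetilde{\log\psi_z}(\lambda)+\theta\log|f'_0(z)|$ (which, as written, carries a sign error — the last term should be $-\theta\log|f'_0(z)|$ so that $\zeta_{(\theta,\lambda)}=\phi_{(\theta,\lambda)}$ on real parameters) but then correctly conclude that $\widetilde\zeta_{w_0}|_{J_0}=-\delta_\eta(0)\log|f'_0|$; the conclusion is the right one, but it contradicts the displayed sign, so fix the sign in the formula to make the two agree.
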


In the proof of the above lemma, 
we will use
the fact that we have
$\mathcal{P}(\phi_{(\delta_\eta(\lambda),\lambda)}) \equiv \eta>0$ to verify the assumptions of
the lemmas in Section \ref{sec_app_log_f'}, and hence of Proposition \ref{p:SU-full}.
This is the reason why 
we take 
$\eta \in (0,\log D)$, rather than $\eta = 0$
as in Bowen's paper \cite{Bowen79}.

\begin{proof}
We 
verify that the three 
assumptions of Proposition \ref{p:SU-full} are satisfied 
when considering
the family of weights $\tilde \zeta_{(\theta, \lambda)}$
instead of $\zeta_w$.

\smallskip

For (1), 
since we have
$\mathcal{P}(\phi_{(\delta_\eta(0),0)}) = \eta > 0$,
Lemma \ref{lem_Pressure>0} ensures that
the
desired property
holds
at the parameter $w_0 = (\delta_\eta(0), 0)$.

\smallskip

For (2),
we observe that, up 
to choosing  $R'' \in (0,R')$
sufficiently small,
the family
$(\theta,\lambda) \mapsto \widetilde \zeta_{(\theta,\lambda)}$ is continuous with respect to the $\beta/2$-H\"older norm by Lemma \ref{l:extension}. 
The desired condition then holds  
thanks to 
the bound $||\cdot||_\diamond \lesssim ||\cdot||_{C^{\beta/2}}$ which we required in the choice of $\|\cdot\|_\diamond$.

\smallskip

For (3),
for each fixed $z \in J_{0}$, the map $(\theta,\lambda) \mapsto \zeta_{(\theta,\lambda)}(z)$ is holomorphic by definition.

\smallskip

As all the assumptions of Proposition \ref{p:SU-full} are satisfied,
the assertion
follows
from 
Proposition \ref{p:SU-full}.
\end{proof}

We can now conclude the proof of 
Proposition \ref{prop_analytic_Mis}, which also concludes the proof of Proposition \ref{prop_deltaanalytic}.

\begin{proof}[End of the proof of
Proposition \ref{prop_analytic_Mis}] \label{proof_end32}

We let $R''$ be as in Lemma \ref{lem_op_hol}
and denote by $ 
\mathcal{L}_{\zeta_{(\theta,\lambda)}} \in L(B_\diamond(J))$ 
the transfer operator associated to $(\theta, \lambda)\in D_\ell (\delta_\eta(0),R'')\times D_\ell (0,R'')$.
For every $(\theta,\lambda) \in \mathbb R \times D_\ell (\lambda_0,R'')$, the value
$e^{P(\zeta_{(\theta,\lambda)})}$
is a simple isolated eigenvalue of the transfer operator $\mathcal{L}_{\zeta_{(\theta,\lambda)}}$.
It follows from 
Lemma \ref{lem_op_hol}
and
the Kato-Rellich perturbation theorem \cite{Kato95}
that 
there exists $0<R<R''$ and an analytic function $\mathcal{A}: D_\ell (\delta_\eta(0),R) \times D_{2\ell}(0,R) \to \mathbb C$ such that
$\mathcal{A}(\theta,\lambda)$ is a simple isolated eigenvalue for $\mathcal{L}_{(\theta,\lambda)}$
for every $(\theta,\lambda) \in D_\ell (\delta_\eta(0),R) \times D_{2\ell}(0,R)$
and $e^{P(\zeta_{(\theta,\lambda)})} = \mathcal{A}(\theta,\lambda)$ for every $(\theta, \lambda) \in (\delta_\eta (0)- R, \delta_\eta(0)+R) \times D_{2\ell} (0,R)$. 
The assertion follows by possibly reducing $R$,
taking the logarithm of 
the function $\mathcal A$, and recalling that $D_\ell(0,R)$ 
is identified 
to a subset of $\mathbb C^{2\ell}$ by means of the map $\iota_\ell$.
The proof is complete.
\end{proof}

\section{The Hessian
form $\langle \cdot, \cdot \rangle_G$ and the pressure form $\langle \cdot, \cdot \rangle_{\mathcal{P}}$} \label{sec_2form}
We continue to use the notations from the previous section. We fix a $\Lambda$-hyperbolic component $\Omega$ of 
a parabolic family $ \Lambda \subset {\rm rat}^{cm}_D$.
In this section, we show that the construction in \cite{HeNie23} of a positive semi-definite 
symmetric bilinear form $\langle \cdot, \cdot \rangle_G$ for hyperbolic components in the moduli space of rational maps can be extended to $\Omega$. To this end, we first construct a positive semi-definite symmetric bilinear form $\langle \cdot, \cdot \rangle_G$ on $\widetilde{\Omega}$, which is a lift of $\Omega$ in the parameter space ${\rm Rat}_D^{cm}$ and show that 
it descends to a positive semi-definite symmetric bilinear form on $\Omega$; see Sections \ref{sec:2-form-1} and \ref{sec:2-form-2}. We construct the pressure form
$\langle \cdot, \cdot \rangle_{\mathcal{P}}$ on $\Omega$ in Section \ref{ss:P-omega} and prove the conformal equivalence of $\langle \cdot, \cdot \rangle_G$ and $\langle \cdot, \cdot \rangle_{\mathcal{P}}$ in Section \ref{sec_conformal_equiv}.

\subsection{The Hessian
$2$-form
$\langle \cdot, \cdot \rangle_G$ on $\widetilde{\Omega}$}\label{sec:2-form-1}
Recall 
that  
$J_\lambda$ denotes the Julia set of $f_\lambda$,  for
 $\lambda \in \widetilde{\Omega}$.
Fix $\lambda_0 \in \widetilde{\Omega}$. Let $U(\lambda_0)$ be a neighborhood of $\lambda_0$
such that the conjugacy $\Psi_\lambda : J_{\lambda_0} \to J_\lambda$
is well-defined for any $f_\lambda \in U(\lambda_0)$.
Note that we can take 
$U(\lambda_0) = \widetilde{\Omega}$ if $\widetilde{\Omega}$ is simply connected.
Fix $\eta \in (0, \log D)$.
For simplicity, we denote by $\nu \defeq \nu_{\eta,\lambda_0}$
the unique
equilibrium state on $J_{\lambda_0}$ for the potential
$-\delta_{\eta}(\lambda_0)\log |f'_{\lambda_0}|$. 
We note that $-\delta_{\eta}(\lambda_0)\log |f'_{\lambda_0}|$ has a unique equilibrium state by the fact that $\mathcal{P}(-\delta_{\eta}(\lambda_0)\log |f'_{\lambda_0}|) = \eta > 0$, see 
Lemmas \ref{lem_Pressure>0} and \ref{lem_IRL}.

\begin{defn} \label{def_lyap}
The function $\mathrm{Ly}_{\lambda_0} : U(\lambda_0)\to \bbR$ is defined as
$$\mathrm{Ly}_{\lambda_0}(\lambda) \defeq  \int_{J_\lambda} \log|f'_\lambda| d\left((\Psi_\lambda)_* \nu\right)=\int_{J_{\lambda_0}} \log|f'_\lambda \circ \Psi_\lambda|d\nu.$$  
\end{defn}

\begin{lem} \label{lem_ly_harmonic}
The function $\mathrm{Ly}_{\lambda_0} : U(\lambda_0)\to \bbR$ is harmonic. In particular, it is real-analytic.
\end{lem}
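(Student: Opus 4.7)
My plan is to exhibit the integrand as a harmonic (even pluriharmonic) function of $\lambda$ for each fixed $z \in J_{\lambda_0}$, and then commute the integral over $\nu$ with the Laplacian via Fubini.

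First, I would fix $z \in J_{\lambda_0}$ and study $F_z(\lambda) \defeq f'_\lambda \circ \Psi_\lambda(z)$. Since the family $\Lambda \subset {\rm Rat}_D^{cm}$ is algebraic, both $(\lambda,w) \mapsto f_\lambda(w)$ and $(\lambda, w) \mapsto f'_\lambda(w)$ are jointly holomorphic. By the $\lambda$-lemma/definition of holomorphic motion, $\lambda \mapsto \Psi_\lambda(z)$ is holomorphic on $U(\lambda_0)$ for every fixed $z \in J_{\lambda_0}$. Composing, $F_z(\lambda)$ is holomorphic in $\lambda \in U(\lambda_0)$. The key non-vanishing point is that $\Lambda$-hyperbolicity guarantees that every critical point of $f_\lambda$ lies in the basin of some attracting cycle or in a parabolic basin, hence in the Fatou set $\mathbb{P}^1 \setminus J_\lambda$; therefore $\Psi_\lambda(z) \in J_\lambda$ is never a critical point, and $F_z(\lambda) \neq 0$ on $U(\lambda_0)$. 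Consequently $\lambda \mapsto \log|F_z(\lambda)|$ is pluriharmonic on $U(\lambda_0)$.

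Next I would pass to the integral. Since $J_{\lambda_0}$ is compact and, shrinking $U(\lambda_0)$ if necessary, the map $(z,\lambda) \mapsto \log|F_z(\lambda)|$ is continuous and locally bounded on $J_{\lambda_0} \times U(\lambda_0)$, the integral $\mathrm{Ly}_{\lambda_0}(\lambda) = \int_{J_{\lambda_0}} \log|F_z(\lambda)| \, d\nu(z)$ is well-defined and continuous. To verify pluriharmonicity, I would check the mean value property on every holomorphic disk: for any complex line through $\lambda \in U(\lambda_0)$ parametrized by $t \mapsto \lambda + t\vec v$ and for any sufficiently small $r>0$, Fubini's theorem together with pluriharmonicity of $\log|F_z|$ gives
\begin{equation*}
\frac{1}{2\pi}\int_0^{2\pi} \mathrm{Ly}_{\lambda_0}(\lambda + re^{i\theta}\vec v) \, d\theta = \int_{J_{\lambda_0}} \frac{1}{2\pi} \int_0^{2\pi} \log|F_z(\lambda + re^{i\theta}\vec v)| \, d\theta \, d\nu(z) = \int_{J_{\lambda_0}} \log|F_z(\lambda)| \, d\nu(z) = \mathrm{Ly}_{\lambda_0}(\lambda).
\end{equation*}
This mean value identity on every complex line through every point of $U(\lambda_0)$ forces $\mathrm{Ly}_{\lambda_0}$ to be pluriharmonic, hence harmonic and a fortiori real-analytic.

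The only delicate point is the non-vanishing of $F_z$, which is why $\Lambda$-hyperbolicity is essential: it is exactly the hypothesis needed to keep the critical set of $f_\lambda$ away from $J_\lambda$ throughout $\Omega$, so that the holomorphic function $\lambda \mapsto F_z(\lambda)$ has a global harmonic logarithm on $U(\lambda_0)$. The integrability/Fubini step is then immediate from compactness and continuity; no uniform control beyond local boundedness is needed.
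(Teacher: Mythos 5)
Your proof is correct and follows essentially the same route as the paper: both rest on the observation that $\lambda\mapsto f'_\lambda\circ\Psi_\lambda(z)$ is holomorphic and non-vanishing for each fixed $z\in J_{\lambda_0}$ (so $\log|f'_\lambda\circ\Psi_\lambda(z)|$ is pluriharmonic in $\lambda$), then push this through the integral over $\nu$ by Fubini. The paper phrases the final step via $dd^c_\lambda \mathrm{Ly}_{\lambda_0}=(\pi_\Omega)_*\big(dd^c_{\lambda,z}\log|f'_\lambda\circ\Psi_\lambda|\wedge\pi_{\mathbb{P}^1}^*\nu\big)=0$, whereas you verify the mean value property on complex lines; these are cosmetically different presentations of the same argument.
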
 
\begin{proof}
We 
show that $dd^c_\lambda \mathrm{Ly}_{\lambda_0} (\lambda)\equiv 0$, where $dd^c_\lambda$ denotes the complex Laplacian. 
Denote by $\pi_\Omega$ and $\pi_{\mathbb P^1}$ the natural projection of $\Omega \times \mathbb P^1$ onto its factors. With these notations, we see that, formally, we have
\begin{equation}\label{e:ddc-L}
dd^c_\lambda \mathrm{Ly}_{\lambda_0}
(\lambda)
=
(\pi_\Omega)_* ( dd^c_{\lambda,z} \log |f'_\lambda \circ \Psi_\lambda| \wedge (\pi_{\mathbb P^1})^* \nu).
\end{equation}
For every $z_0\in J_{\lambda_0}$, we have
\[
dd^c_{\lambda,z} \log |f'_\lambda \circ \Psi_\lambda (z_0)|=0
\]
since the map $\lambda \mapsto f'_\lambda \circ \Psi_\lambda (z_0)$ is a non-vanishing holomorphic function on $\Omega$ for every $z_0\in J_{\lambda_0}$. The assertion follows from Fubini's theorem, which also justifies that the currents and the equality
in \eqref{e:ddc-L} are well-defined.
\end{proof}

\begin{defn} \label{def_G_f}
The function $G_{\lambda_0}: U(\lambda_0) \to \bbR$ is defined as
$$G_{\lambda_0}(\lambda) \defeq \delta_{\eta}(\lambda)\mathrm{Ly}_{\lambda_0}(\lambda) = \delta_{\eta}(\lambda) \int_{J_{\lambda_0}} \log|f'_\lambda \circ \Psi_\lambda|d\nu.$$
\end{defn}

\begin{lem}
The function $G_{\lambda_0}: U(\lambda_0) \to \bbR$ is real-analytic. 
\end{lem}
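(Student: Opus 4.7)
The statement is an immediate consequence of two facts already established in the text, so the plan is very short.

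The function $G_{\lambda_0}$ is defined as the pointwise product
\[
G_{\lambda_0}(\lambda) = \delta_\eta(\lambda)\cdot \mathrm{Ly}_{\lambda_0}(\lambda).
\]
The first factor $\delta_\eta$ is real-analytic on $\Omega$ by Proposition \ref{prop_deltaanalytic} (and in particular on $U(\lambda_0)$). The second factor $\mathrm{Ly}_{\lambda_0}$ is harmonic, hence real-analytic, on $U(\lambda_0)$ by Lemma \ref{lem_ly_harmonic}. Since the product of two real-analytic real-valued functions on an open subset of $\bbC^\ell \cong \bbR^{2\ell}$ is again real-analytic, the conclusion follows.

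There is no real obstacle here; the step that required work was the analyticity of $\delta_\eta$, carried out through the transfer operator/spectral gap arguments of Section \ref{sec_AnalyticityBowenParameter}. Given that, the present lemma is purely formal.
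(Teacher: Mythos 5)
Your proof is correct and matches the paper's argument exactly: the paper also deduces the lemma directly from Proposition \ref{prop_deltaanalytic} (analyticity of $\delta_\eta$) and Lemma \ref{lem_ly_harmonic} (harmonicity, hence real-analyticity, of $\mathrm{Ly}_{\lambda_0}$), the product of real-analytic functions being real-analytic.
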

\begin{proof}
The statement follows from Proposition \ref{prop_deltaanalytic} and Lemma \ref{lem_ly_harmonic}.
\end{proof}

Let $G:X \to \mathbb R$ be a smooth real-valued function on an $\ell$-dimensional smooth manifold $X$. The {\it Hessian} of $G$ at $x\in X$ with respect to a chart $u : U(x) \to \mathbb R^{\ell}$ is the map
$G_{(u)}''(x) : T_{x}X \times T_{x}X \to \mathbb R$ represented by the $2$-form
$$G_{(u)}''(x) = \sum_{i,j=1}^\ell \frac{\partial^2 G_{(u)}}{\partial u_i\partial u_j}(x) du_i \otimes du_j.$$
We note that if $u : U(x) \to \mathbb R^{\ell}$ and $v : V(x) \to \mathbb R^{\ell}$ are two different charts, we have $G''_{(u)}(x) = G''_{(v)}(x)$ as soon as 
$DG(x)$ vanishes, where $DG(x)$ is the differential $DG(x):T_xX \to T_{G(x)}\mathbb R$ (which 
a priori 
depends on the coordinate charts, but the condition of its vanishing does not);
see \cite[Section 7]{Bridgeman08}.
If $x\in X$ is such that $DG(x)=0$, then $G''(x) \defeq G''_{(u)}(x)$
(with respect to any chart $u$)
is a well-defined symmetric bilinear form $G''(x):T_{x}X \times T_{x}X \to \mathbb R$ on $T_xX$.
Moreover,
if $x$ is a local minimum for $G$, then $G''(x)$ is positive semi-definite. 

\medskip

In what follows, we will show that $G_{\lambda_0}$ has a minimum at $\lambda_0$. Therefore we have $DG_{\lambda_0}(\lambda_0)=0$ and the Hessian $G_{\lambda_0}''(\lambda_0) : T_{\lambda_0}\widetilde{\Omega} \times T_{\lambda_0}\widetilde{\Omega} \to \mathbb R$ is well-defined at $\lambda_0$
and is
 positive semi-definite.

\begin{prop}\label{prop_metric}
Fix $\lambda_0 \in\widetilde{\Omega}$.
We have 
$G_{\lambda_0}(\lambda_0) \le G_{\lambda_0}(\lambda)$ for all $\lambda \in U(\lambda_0)$.
\end{prop}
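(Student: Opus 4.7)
The proof I would write is the standard McMullen-type variational argument adapted to our setting. The key observation is that the variational characterization of the pressure, together with the uniqueness of the equilibrium state at $\lambda_0$, forces $\lambda_0$ to be a minimum of $G_{\lambda_0}$.

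First, I would exploit the conjugacy. Since $\Psi_\lambda\colon J_{\lambda_0}\to J_\lambda$ is a topological conjugacy between $f_{\lambda_0}$ and $f_\lambda$, the pushforward $(\Psi_\lambda)_*\nu$ is an $f_\lambda$-invariant probability measure on $J_\lambda$ with the same measure-theoretic entropy as $\nu$, namely $h_\nu(f_{\lambda_0})$. Using the change-of-variable formula, one rewrites
\[
\int_{J_\lambda}\log|f'_\lambda|\,d((\Psi_\lambda)_*\nu)=\int_{J_{\lambda_0}}\log|f'_\lambda\circ\Psi_\lambda|\,d\nu=\mathrm{Ly}_{\lambda_0}(\lambda).
\]

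Second, I would apply the variational principle for the pressure at the parameter $\lambda$ to the potential $-\delta_\eta(\lambda)\log|f'_\lambda|$ on $J_\lambda$, tested against the admissible measure $(\Psi_\lambda)_*\nu$. By the definition of $\delta_\eta(\lambda)$ we have $\mathcal{P}(-\delta_\eta(\lambda)\log|f'_\lambda|)=\eta$, and therefore
\[
\eta \;\geq\; h_{(\Psi_\lambda)_*\nu}(f_\lambda)-\delta_\eta(\lambda)\int_{J_\lambda}\log|f'_\lambda|\,d((\Psi_\lambda)_*\nu)\;=\;h_\nu(f_{\lambda_0})-G_{\lambda_0}(\lambda).
\]
At $\lambda=\lambda_0$, the measure $\nu$ is by assumption the (unique) equilibrium state for $-\delta_\eta(\lambda_0)\log|f'_{\lambda_0}|$, so the inequality above becomes an equality:
\[
\eta\;=\;h_\nu(f_{\lambda_0})-G_{\lambda_0}(\lambda_0).
\]

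Subtracting these two relations gives $G_{\lambda_0}(\lambda)\geq G_{\lambda_0}(\lambda_0)$ for every $\lambda\in U(\lambda_0)$, which is the claim.

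The potential technical issue is verifying that the topological conjugacy $\Psi_\lambda$ genuinely preserves measure-theoretic entropy and that the variational principle applies to the (only H\"older on $J_\lambda$) geometric potential. Both are standard: entropy is preserved by any measurable bijective conjugacy, and the results in Section~\ref{ss:operators} together with Lemma~\ref{lem_IRL} ensure that $-\delta_\eta(\lambda)\log|f'_\lambda|$ admits a well-defined pressure via an extension to $\mathbb{P}^1$ whose equilibrium states coincide with those of the restriction to $J_\lambda$ (Lemma~\ref{l:compare}). Once these are in hand, the inequality is purely formal, and no further estimate is needed.
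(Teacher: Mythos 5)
Your proposal is correct and follows essentially the same argument as the paper: push forward $\nu$ by the conjugacy, use preservation of entropy, apply the equality case of the variational principle at $\lambda_0$ (uniqueness of the equilibrium state) and the inequality case at $\lambda$ with $\mathcal{P}(-\delta_\eta(\lambda)\log|f'_\lambda|)=\eta$, then combine. The paper organizes the same three relations slightly differently, but there is no substantive difference.
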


\begin{proof}
Recall that we denote by $\nu$ the unique
equilibrium state on $J_{\lambda_0}$ for the weight
$-\delta_{\eta}(\lambda_0)\log |f'_{\lambda_0}|$.
To ease notation, we also 
set $\nu_{\lambda}\defeq (\Psi_{\lambda})_\ast \nu$
for every 
$\lambda \in U(\lambda_0)$. Then we have
$$\mathrm{Ly}(\nu,f_\lambda)  = \int_{J_\lambda} \log|f'_\lambda|d \nu_{\lambda}
\quad
\mbox{ for all } \lambda \in U(\lambda_0).$$
	
Since 
the weight $-\delta_\eta(\lambda_0) \log|f'_{\lambda_0}|$
has pressure $\eta$
with respect to $f_{\lambda_0}$
and $\nu = \nu_{\lambda_0}$ is its (unique)
equilibrium state, by the definition of pressure
we have
\begin{equation} \label{eq_451}
h_{\nu_{\lambda_0}}(f_{\lambda_0}) = \eta + \delta_\eta(\lambda_0)\int_{J_{\lambda_0}} \log|f'_{\lambda_0}| d\nu_{\lambda_0}.
\end{equation}

Since  $\nu=\nu_{\lambda_0}$
is $f_{\lambda_0}$-invariant,  $\nu_\lambda$ is $f_\lambda$-invariant for every $\lambda \in U(\lambda_0)$.
Since the measure-theoretic
entropy is invariant under topological conjugacy, it follows that we have
\begin{equation} \label{eq_452}
h_{\nu_{\lambda_0}}(f_{\lambda_0}) =h_{(\Psi_\lambda)_\ast \nu_{\lambda_0}}(f_\lambda)=h_{\nu_\lambda}(f_\lambda).
\end{equation}
Again by the definition of pressure and the identity $P(-\delta_\eta(\lambda)\log|f'_\lambda|) = \eta$, we have
\begin{equation} \label{eq_453}
h_{\nu_\lambda}(f_\lambda) \le \eta + \delta_\eta(\lambda) \int_{J_\lambda} \log|f'_\lambda| d \nu_\lambda
\quad
\mbox{ for all } \lambda \in U(\lambda_0).
\end{equation}
Combining
\eqref{eq_451}, \eqref{eq_452},
and \eqref{eq_453}, we obtain the inequality
$\delta_\eta(\lambda_0)\mathrm{Ly}(\nu,f_{\lambda_0})\le\delta_\eta(\lambda)\mathrm{Ly}(\nu,f_\lambda)$. The assertion follows.
\end{proof}

Therefore,
the Hessian of $G_{\lambda_0}$ at $\lambda_0$ defines a positive semi-definite symmetric bilinear form $\langle \cdot, \cdot \rangle_G$ on the tangent space $T_{\lambda_0}\widetilde{\Omega}$ as follows. 

\begin{defn} \label{def_G_parameter}
For every
$\vec{u},\vec{v} \in T_{\lambda_0}\widetilde{\Omega}$,
we define 
$$\langle \vec{u},\vec{v} \rangle_G \defeq (G_{\lambda_0}''(\lambda_0))(\vec{u},\vec{v}).$$
\end{defn}

For every $\vec{v}
\in T_{\lambda_0}\widetilde{\Omega}$,
we will also denote
$\|\vec{v}\|_G \defeq \sqrt{\langle \vec{v}, \vec{v}\rangle_G}$.

\begin{lem} \label{rmk_47}
Let $\gamma(t), t \in (-1, 1)$
be a smooth
path in $U(\lambda_0)$ with $\gamma(0) = \lambda_0$ and $\gamma'(0)=\vec{v} \in T_{\lambda_0}\widetilde{\Omega}$. Then 
$$||\vec{w}||_G^2 = \left.\frac{d^2}{dt^2} \right|_{t=0} G_{\lambda_0}(\gamma(t)).$$
\end{lem}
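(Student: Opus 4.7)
The plan is to recognize this as the standard fact that, at a critical point of a smooth function, the Hessian can be read off as the second derivative along any smooth curve through that point, independently of the parametrization. Writing the statement in a chart, the argument reduces to the chain rule.

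More concretely, I would proceed as follows. Fix a coordinate chart $u=(u_1,\ldots,u_\ell)$ on $U(\lambda_0)$ centred at $\lambda_0$, and write $\gamma(t)=(\gamma_1(t),\ldots,\gamma_\ell(t))$ in these coordinates. By the chain rule, for every $t\in(-1,1)$,
\[
\frac{d^2}{dt^2}G_{\lambda_0}(\gamma(t))
=\sum_{i,j=1}^\ell \frac{\partial^2 G_{\lambda_0}}{\partial u_i\partial u_j}(\gamma(t))\,\gamma_i'(t)\gamma_j'(t)
+\sum_{i=1}^\ell \frac{\partial G_{\lambda_0}}{\partial u_i}(\gamma(t))\,\gamma_i''(t).
\]
At $t=0$, evaluate each term using Proposition \ref{prop_metric}. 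That proposition shows that $\lambda_0$ is a local minimum of $G_{\lambda_0}$, so $DG_{\lambda_0}(\lambda_0)=0$, i.e., all first partials $\partial G_{\lambda_0}/\partial u_i$ vanish at $\lambda_0$. Consequently the acceleration term drops out, and since $\gamma'(0)=\vec v$, we obtain
\[
\left.\frac{d^2}{dt^2}\right|_{t=0}G_{\lambda_0}(\gamma(t))
=\sum_{i,j=1}^\ell \frac{\partial^2 G_{\lambda_0}}{\partial u_i\partial u_j}(\lambda_0)\,v_i v_j
=G_{\lambda_0}''(\lambda_0)(\vec v,\vec v)
=\langle\vec v,\vec v\rangle_G=\|\vec v\|_G^2,
\]
which is the desired identity (interpreting the ``$\vec w$'' of the statement as $\vec v$).

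There is essentially no obstacle: the only non-trivial input is the vanishing of the first derivative, which is precisely what Proposition \ref{prop_metric} delivers, and this is exactly the reason the Hessian $G_{\lambda_0}''(\lambda_0)$ is coordinate-independent (so the right-hand side is intrinsic and the computation in an arbitrary chart suffices). No further analytic estimates are required.
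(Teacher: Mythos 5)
Your proof is correct and follows essentially the same route as the paper: expand $\frac{d^2}{dt^2}G_{\lambda_0}(\gamma(t))$ by the chain rule, kill the acceleration term using $DG_{\lambda_0}(\lambda_0)=0$ from Proposition \ref{prop_metric}, and identify the remaining term with $G''_{\lambda_0}(\lambda_0)(\vec v,\vec v)=\|\vec v\|_G^2$. Your reading of the statement's ``$\vec w$'' as $\vec v$ is also the intended one.
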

\begin{proof}
For every $\gamma$ as in the statement,
we have

\[
\left.\frac{d^2}{dt^2} \right|_{t=0} G_{\lambda_0}(\gamma(t))
= G''_{\lambda_0}(\gamma(0))(\gamma'(0),\gamma'(0)) + DG_{\lambda_0}(\gamma(0)) \cdot \gamma''(0)\\
=G''_{\lambda_0}(\lambda_0)
(\vec{v},\vec{v}).
\]
The second equality follows from the identity $DG_{\lambda_0}(\gamma(0)) = DG_{\lambda_0}(\lambda_0) = 0$
given by Proposition \ref{prop_metric}. 
The assertion follows.
\end{proof}

\subsection{The Hessian
$2$-form
$\langle \cdot, \cdot \rangle_G$
on $\Omega$}\label{sec:2-form-2}  
We now show 
that the Hessian form $\langle \cdot, \cdot \rangle_G$
on (the tangent bundle of)
$\widetilde \Omega$
descends to a symmetric bilinear form on
(the tangent bundle of)
$\Omega$. 
We first need a preliminary lemma.
\begin{lem} \label{lem_mobius_conj_coho}
Fix
$\lambda_0 \in \widetilde \Omega$. If $f_1,f_2 \in \widetilde{\Omega}$ are M\"obius conjugate, then $\log|f'_1
\circ 
\Psi_{f_1}|$
and
$
\log|f'_2
\circ 
\Psi_{f_2}|$
are $C^0$-cohomologous. In particular, we have
$\delta_{\eta}(\lambda_{f_1})=\delta_{\eta}(\lambda_{f_2})$. Here $\Psi_{f_i} : J_{\lambda_0} \to J(f_i), i=1,2$ is the conjugacy map.
\end{lem}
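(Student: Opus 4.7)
The idea is to apply the chain rule to the Möbius conjugacy. Pick a Möbius transformation $M$ with $f_2 = M \circ f_1 \circ M^{-1}$; differentiating and taking $\log|\cdot|$ gives, for $x \in J(f_1)$,
\[
\log|f_2'(M(x))| - \log|f_1'(x)| = \log|M'(f_1(x))| - \log|M'(x)|,
\]
so that $\log|f_2'|\circ M$ and $\log|f_1'|$ differ by the coboundary $H\circ f_1 - H$ of $H \defeq \log|M'|$ on $(J(f_1), f_1)$.

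I would then pull this identity back to $J_{\lambda_0}$ via the holomorphic motion. Both $M\circ \Psi_{f_1}$ and $\Psi_{f_2}$ are Hölder conjugacies from $(J_{\lambda_0}, f_{\lambda_0})$ to $(J(f_2), f_2)$; since the holomorphic motion is produced by analytic continuation of repelling periodic points and $M$ carries the continuation for $f_1$ to the corresponding one for $f_2$, the two maps $M\circ \Psi_{f_1}$ and $\Psi_{f_2}$ coincide. Composing the chain rule relation with $\Psi_{f_1}$ on the right and using $f_1\circ \Psi_{f_1} = \Psi_{f_1}\circ f_{\lambda_0}$ then yields
\[
\log|f_2'\circ \Psi_{f_2}|(z) - \log|f_1'\circ \Psi_{f_1}|(z) = h(f_{\lambda_0}(z)) - h(z), \qquad h \defeq \log|M'\circ \Psi_{f_1}|,
\]
with $h$ continuous on $J_{\lambda_0}$ (working in a spherical chart if $M$ happens to send some point of $J(f_1)$ to $\infty$). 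Up to replacing $h$ by $-h$, this is exactly the $C^0$-cohomology relation in the convention of Lemma \ref{lem_variance_cobdry}.

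For the second claim, cohomologous Hölder potentials have equal topological pressure, so taking pressures of $-\delta$ times the above gives $\mathcal P(-\delta \log|f_1'\circ \Psi_{f_1}|) = \mathcal P(-\delta \log|f_2'\circ \Psi_{f_2}|)$ for every $\delta > 0$; combined with the strict monotonicity from Lemma \ref{lem_pressure_decreasing}, this yields $\delta_\eta(\lambda_{f_1}) = \delta_\eta(\lambda_{f_2})$. The only delicate point is the identification $\Psi_{f_2} = M\circ \Psi_{f_1}$, since a priori the two conjugacies could differ by a continuous automorphism $\sigma$ of $(J_{\lambda_0}, f_{\lambda_0})$ commuting with $f_{\lambda_0}$; however such a $\sigma$ cyclically shifts each periodic orbit, so the Birkhoff sums of $\log|f_2'\circ \Psi_{f_2}| - \log|f_2'\circ \Psi_{f_2}|\circ \sigma$ along periodic orbits vanish identically, and a Livsic-type rigidity argument (using the variance formulation of Lemma \ref{lem_variance_cobdry}) shows that this difference is a $C^0$-coboundary, leaving the cohomology class of $\log|f_2'\circ \Psi_{f_2}|$ unchanged.
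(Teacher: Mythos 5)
Your proof follows essentially the same route as the paper: you conjugate by the M\"obius map $M$ (the paper's $g$), apply the chain rule to express $\log|f_2'\circ M|-\log|f_1'|$ as a coboundary of $\log|M'|$ on $(J(f_1),f_1)$, transport this relation to $J_{\lambda_0}$ via $\Psi_{f_1}$ using the intertwining $\Psi_{f_2}=M\circ\Psi_{f_1}$, and finally deduce equality of the $\delta_\eta$-values from the cohomology-invariance of pressure together with the monotonicity in Lemma \ref{lem_pressure_decreasing}. The paper's proof is exactly this computation (with the roles of $f_1,f_2$ swapped and $h=\log|g'\circ\Psi_{f_2}|$), and it likewise takes the key identity $g=\Psi_{f_1}\circ\Psi_{f_2}^{-1}$ for granted in a single clause; your appeal to uniqueness of the holomorphic motion on repelling periodic points is the right mechanism to flag, although the phrase asserting that $M$ carries the continuation for $f_1$ to the corresponding one for $f_2$ is precisely the point that would need a short argument (e.g.\ that the continuation along the M\"obius orbit agrees with the continuation in $\lambda$) rather than a restatement of it.

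The fallback argument in your last paragraph is flawed and should be dropped. A continuous automorphism $\sigma$ of $(J_{\lambda_0},f_{\lambda_0})$ commuting with $f_{\lambda_0}$ sends $n$-periodic points to $n$-periodic points, but it need not fix each individual cycle: it can permute distinct cycles of the same period. For a cycle $O$ with $\sigma(O)\neq O$, the Birkhoff sum of $\psi-\psi\circ\sigma$ over $O$ equals $\sum_{O}\psi-\sum_{\sigma(O)}\psi$, which for $\psi=\log|f_2'\circ\Psi_{f_2}|$ is the difference of the log-multipliers of $O$ and $\sigma(O)$ and is in general nonzero. So the Livsic-type criterion does not deliver the claimed coboundary, and in any case this paragraph is unnecessary once the motion-equivariance from your second paragraph is accepted, which is all the paper itself relies on.
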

\begin{proof}
Let $g$ be a 
M\"obius transformation such that
$f_1 = g\circ f_2\circ g^{-1}$
and observe that
$g= \Psi_{f_1} \circ \Psi_{f_2}^{-1}$.
Then, for every $x \in J_{\lambda_0}$, we have 
\begin{align*}
\log|f'_1\circ \Psi_{f_1}(x)| &= \log|f'_1\circ g\circ \Psi_{f_2}(x)|=\log | 
(g\circ f_2\circ g^{-1})' (g\circ \Psi_{f_2}(x))|\\
&= \log |g' (f_2\circ \Psi_{f_2}(x))|
+ \log |f'_2 (\Psi_{f_2}(x))| + \log |(g^{-1})' (g\circ \Psi_{f_2}(x))|\\
&=\log | g' (f_2\circ \Psi_{f_2}(x))|+ \log |f'_2 (\Psi_{f_2}(x))| - 
\log |g' (\Psi_{f_2}(x))|\\
&=\log | g' (\Psi_{f_2}\circ f_{\lambda_0}(x))|+ \log |f'_2 (\Psi_{f_2}(x))| - 
\log |g' (\Psi_{f_2}(x))|.
\end{align*}
The first 
assertion follows by
setting $h\defeq
\log |g' (\Psi_{f_2}(x))|$
and observing that $h$ is continuous
as $g$ has no critical points.
As the pressure function
only depends on cohomology classes \cite{Parry90}, 
we have
$\mathcal{P}(-t\log|f'_1
\circ 
\Psi_{f_1}|) = \mathcal{P}(-t\log|f'_2
\circ 
\Psi_{f_2}|)$ for all $t \in \mathbb R$. Therefore,
we have $\delta_{\eta}(f_1)=\delta_{\eta}(f_2)$ and the proof is complete. 
\end{proof}

\begin{lem}
Fix $[\lambda] \in\Omega$ and $\vec{v}\in T_{[\lambda]}\Omega$.
Let $\gamma(t), t\in(-1,1)$ be a smooth
curve in $\Omega$ with $\gamma(0)= [\lambda]$ and $\gamma'(0)=\vec{v}$.
If $\widetilde{\gamma}_1(t)$ and $\widetilde{\gamma}_2(t)$ are two smooth
lifts of $\gamma(t)$ to $\widetilde{\Omega}$, then we have
$G_{\widetilde{\gamma}_1(0)}(\widetilde{\gamma}_1(t))=G_{\widetilde{\gamma}_2(0)}(\widetilde{\gamma}_2(t))$ for every $t \in (-1, 1)$.
\end{lem}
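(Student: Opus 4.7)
I would verify the equality pointwise in $t$. Fix $t \in (-1,1)$ and set $\mu_i \defeq \widetilde\gamma_i(0)$, $\lambda_i \defeq \widetilde\gamma_i(t)$ for $i=1,2$; write $\Psi^i_\lambda$ for the conjugacy $J_{\mu_i} \to J_\lambda$ from the holomorphic motion based at $\mu_i$, and let $\nu_i$ denote the equilibrium state of $-\delta_\eta(\mu_i)\log|f'_{\mu_i}|$ on $J_{\mu_i}$. Since the two lifts project to the same point in $\Omega$, both pairs $(f_{\mu_1}, f_{\mu_2})$ and $(f_{\lambda_1}, f_{\lambda_2})$ are M\"obius conjugate, so we can choose M\"obius transformations $g_0, g_t$ with $f_{\mu_2} = g_0 \circ f_{\mu_1} \circ g_0^{-1}$ and $f_{\lambda_2} = g_t \circ f_{\lambda_1} \circ g_t^{-1}$. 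Recalling $G_{\mu_i}(\lambda_i) = \delta_\eta(\lambda_i)\,\mathrm{Ly}_{\mu_i}(\lambda_i)$, I would verify the equality factor by factor. Lemma~\ref{lem_mobius_conj_coho} applied to $f_{\lambda_1}, f_{\lambda_2}$ immediately gives $\delta_\eta(\lambda_1) = \delta_\eta(\lambda_2)$, reducing the task to showing $\mathrm{Ly}_{\mu_1}(\lambda_1) = \mathrm{Ly}_{\mu_2}(\lambda_2)$.

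For the Lyapunov factor, I would first transfer the equilibrium state: the coboundary identity from the proof of Lemma~\ref{lem_mobius_conj_coho} shows that $\log|f'_{\mu_2} \circ g_0|$ and $\log|f'_{\mu_1}|$ are $C^0$-cohomologous on $J_{\mu_1}$, so the weights $-\delta_\eta(\mu_2)\log|f'_{\mu_2} \circ g_0|$ and $-\delta_\eta(\mu_1)\log|f'_{\mu_1}|$ share a pressure and hence, by uniqueness of equilibrium states (Lemmas~\ref{lem_Pressure>0} and~\ref{lem_IRL}), the equilibrium state $(g_0^{-1})_*\nu_2$ of the former equals $\nu_1$; thus $\nu_2 = (g_0)_*\nu_1$. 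Analogously, differentiating $f_{\lambda_2} \circ g_t = g_t \circ f_{\lambda_1}$ and using $\Psi^1_{\lambda_1} \circ f_{\mu_1} = f_{\lambda_1} \circ \Psi^1_{\lambda_1}$ gives, for every $w \in J_{\mu_1}$,
\[
\log|f'_{\lambda_2} \circ g_t \circ \Psi^1_{\lambda_1}(w)| = \log|f'_{\lambda_1} \circ \Psi^1_{\lambda_1}(w)| + h(f_{\mu_1}(w)) - h(w), \qquad h \defeq \log|g'_t \circ \Psi^1_{\lambda_1}|,
\]
exhibiting the difference as a coboundary over $(J_{\mu_1}, f_{\mu_1})$. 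Assuming the compatibility identity $\Psi^2_{\lambda_2} \circ g_0 = g_t \circ \Psi^1_{\lambda_1}$ on $J_{\mu_1}$, I would then compute
\[
\mathrm{Ly}_{\mu_2}(\lambda_2) = \int_{J_{\mu_2}}\!\log|f'_{\lambda_2} \circ \Psi^2_{\lambda_2}|\, d\nu_2 = \int_{J_{\mu_1}}\!\log|f'_{\lambda_2} \circ g_t \circ \Psi^1_{\lambda_1}|\, d\nu_1 = \int_{J_{\mu_1}}\!\log|f'_{\lambda_1} \circ \Psi^1_{\lambda_1}|\, d\nu_1 = \mathrm{Ly}_{\mu_1}(\lambda_1),
\]
where the middle equality uses $\nu_2=(g_0)_*\nu_1$ together with the compatibility identity, and the coboundary term integrates to zero by $f_{\mu_1}$-invariance of $\nu_1$. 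Combined with the equality of the $\delta_\eta$ factors, this gives the claim.

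\textbf{Main obstacle.} The delicate point is establishing the identity $\Psi^2_{\lambda_2} \circ g_0 = g_t \circ \Psi^1_{\lambda_1}$ on $J_{\mu_1}$, which requires choosing $g_t$ consistently with $g_0$ rather than as an arbitrary M\"obius conjugacy at each $t$. Since the lifts are smooth, one can select $g_t$ to depend continuously on $t$ (the marking of the critical points rigidifies the M\"obius group action on $\widetilde\Omega$, preventing automorphism ambiguities in generic cases); both sides of the desired identity then become continuously-varying conjugations from $f_{\mu_2}|_{J_{\mu_2}}$ to $f_{\lambda_2}|_{J_{\lambda_2}}$ which agree at $t=0$, and the uniqueness of the holomorphic motion of repelling periodic points (and their density in the Julia set) forces them to coincide for all $t$. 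This compatibility, together with checking that the coboundary and equilibrium-state identifications propagate cleanly to the integrals, is the main technical verification needed to complete the argument.
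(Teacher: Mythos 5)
Your proposal is correct and follows essentially the same route as the paper: use Lemma~\ref{lem_mobius_conj_coho} to equate the Bowen numbers, identify the equilibrium states via uniqueness (your $\nu_2=(g_0)_*\nu_1$ is the paper's $M_*\nu_2=\nu_1$ with $M=g_0^{-1}$), and then transfer the Lyapunov integral by a change of variables and a coboundary cancellation. The one difference is bookkeeping: the paper's displayed computation suppresses the conjugacies $\Psi_{\tilde\gamma_i(t)}$ and the $t$-dependent M\"obius conjugacy, so the final equality $\int_{J_{\tilde\gamma_2(0)}} \log|f'_{\tilde\gamma_1(t)}\circ M|\,d\nu_2 = G_{\tilde\gamma_2(0)}(\tilde\gamma_2(t))$ silently uses exactly the compatibility identity $\Psi^2_{\lambda_2}\circ g_0 = g_t\circ\Psi^1_{\lambda_1}$ that you isolate as the ``main obstacle''; your observation that it holds by uniqueness of the holomorphic motion (continuation of repelling periodic points, agreement at $t=0$, density of periodic points in the Julia set) is the right justification and makes the argument cleaner than the paper's writeup.
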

\begin{proof}
Let 
$\widetilde{\gamma}_1(t)$ and $\widetilde{\gamma}_2(t)$ 
be two
lifts of $\gamma(t)$ to 
$\widetilde{\Omega}$
as in the statement. 
Since the definitions are local, we may assume that we have
${\widetilde{\gamma}_1(t)}\subset U({\widetilde{\gamma}_1(0)})$ and ${\widetilde{\gamma}_2(t)}\subset U({\widetilde{\gamma}_2(0)})$
for all $t \in (-1,1)$,
where the neighbourhood $U({\lambda})$ 
of $\lambda \in \widetilde \Omega$ is as in the previous section.
Since $\widetilde{\gamma}_1(t)$ and $\widetilde{\gamma}_2(t)$ are M\"obius conjugate, we have $\delta_{\eta}(\widetilde{\gamma}_1(t)) = \delta_{\eta}(\widetilde{\gamma}_2(t))$ 
for every $t \in (-1,1)$
by Lemma \ref{lem_mobius_conj_coho}.
Let $M : J_{\widetilde{\gamma}_2(0)} \to J_{\widetilde{\gamma}_1(0)}$ be the M\"obius conjugacy map, and $\nu_i$ be the equilibrium state of $-\delta_\eta(\widetilde{\gamma}_i(0))\log |f_{\widetilde{\gamma}_i(0)}'|$ 
for $i=1,2$. 

\medskip

We first claim that $M_\ast \nu_2 = \nu_1$.
We compute
\begin{align*}
h_{M_\ast \nu_2}(f_{\widetilde{\gamma}_1(0)}) + \int_{J_{\widetilde{\gamma}_1(0)}} \log |f_{\widetilde{\gamma}_1(t)}'|d(M_\ast \nu_2) &=h_{\nu_2}(f_{\widetilde{\gamma}_2(0)}) + \int_{M^{-1}J_{\widetilde{\gamma}_1(0)}} \log |f_{\widetilde{\gamma}_1(t)}' \circ M|d\nu_2\\
&=h_{\nu_2}(f_{\widetilde{\gamma}_2(0)}) + \int_{J_{\widetilde{\gamma}_2(0)}} \log |f_{\widetilde{\gamma}_2(t)}'|d\nu_2 \\
&= \mathcal{P}(-\delta_\eta(\widetilde{\gamma}_2(0))\log |f_{\widetilde{\gamma}_2(0)}'|)
~(\text{as $\nu_2$ is an equilibrium state})\\
&= \mathcal{P}(-\delta_\eta(\widetilde{\gamma}_1(0))\log |f_{\widetilde{\gamma}_1(0)}'|)
~(\text{by Lemma \ref{lem_mobius_conj_coho}}).
\end{align*}
Therefore,
$M_\ast \nu_2$ is an equilibrium state for $-\delta_\eta(\widetilde{\gamma}_1(0))\log |f_{\widetilde{\gamma}_1(0)}'|$.
Since $-\delta_\eta(\widetilde{\gamma}_1(0))\log |f_{\widetilde{\gamma}_1(0)}'|$ has a unique equilibrium state, this gives the desired equality 
$M_\ast\nu_2 = \nu_1$.

\medskip

It follows from the above
that we have
\begin{align*}
G_{\widetilde{\gamma}_1(0)}(\widetilde{\gamma}_1(t))&=\delta_{\eta}(\widetilde{\gamma}_1(t)) \int_{J_{\widetilde{\gamma}_1(0)}} \log |f_{\widetilde{\gamma}_1(t)}' |d\nu_1 =\delta_{\eta}(\widetilde{\gamma}_2(t)) \int_{M(J_{\widetilde{\gamma}_2(0)})} \log |f_{\widetilde{\gamma}_1(t)}' |d (M_\ast \nu_2) \\
&=\delta_{\eta}(\widetilde{\gamma}_2(t)) \int_{J_{\widetilde{\gamma}_2(0)}} \log |f_{\widetilde{\gamma}_1(t)}'  \circ M|d \nu_2 
=G_{\widetilde{\gamma}_2(0)}(\widetilde{\gamma}_2(t))
\end{align*}
for every $t\in (-1,1)$. The assertion follows. 
\end{proof}

The above lemma implies that the following definition is well-posed.
\begin{defn}\label{def_metric_G}
For every
$[\lambda]\in\Omega$ and $\vec{v}\in T_{[\lambda]}\Omega$, we define 
$$||\vec{v}||_G \defeq \|\widetilde{\vec{v}}\|_G = ||\widetilde{\gamma}'(0)||_G$$
where
$\tilde \lambda$ is any representative of $[\lambda]$,
$\widetilde{\vec{v}}$ is a lift of the tangent vector $\vec{v}$ to $T_{\tilde \lambda}\tilde \Omega$,
and $\widetilde{\gamma}(t)$, 
$t\in(-1,1)$ is any
smooth real $1$-dimensional curve in $\widetilde\Omega$ with $\widetilde{\gamma}(0)= \widetilde\lambda$ and $\gamma'(0)=\widetilde{\vec{v}}$.
\end{defn}

\subsection{The pressure $2$-form 
$\langle \cdot, \cdot \rangle_{\mathcal{P}}$ on $\Omega$}\label{ss:P-omega}
From now on, we will use $\lambda$
(instead of $[\lambda]$ as before)
to denote an element of $\Omega$. By abuse of notation, we will refer to $f_\lambda$ and $J_\lambda$ as the objects corresponding to $\lambda$, even if a priori $\lambda$ should stand for an equivalence class. This identification is meaningful by the lemmas in the previous section, as we can choose a lift of representatives of the elements of $\Omega$ in $\widetilde \Omega$.

In this section, we construct the pressure form $\langle \cdot, \cdot \rangle_{\mathcal{P}}$ on $\Omega$. 
Fix $\lambda_0 \in \Omega$. Since,
for every $\lambda \in \Omega$, the dynamical system $(J_\lambda,f_\lambda)$ is topologically
conjugate to $(J_{\lambda_0},f_{\lambda_0})$
by means of the holomorphic motion,
we can think of $(J_{\lambda_0},f_{\lambda_0})$ as a {\it model dynamics} for
every $f_\lambda$ with $\lambda\in\Omega$. 
Recall that we denote by
$\Psi_\lambda: J_{\lambda_0} \to J_\lambda$  the conjugacy map. 

\medskip

For a fixed $\eta \in (0,\log D)$, recall that  $\delta_{\eta}(\lambda)$ is the unique real number 
satisfying
$$\mathcal{P}(-\delta_{\eta}(\lambda)\log|f'_\lambda\circ \Psi_\lambda|) = \eta.$$  

Let $\mathcal{C}_\eta(J_{\lambda_0})$ be the set of cohomology classes of H\"older continuous functions with pressure $\eta$
with respect to $f_{\lambda_0}$,
that is,
$$\mathcal{C}_\eta(J_{\lambda_0}) \defeq \{\phi: \phi \in C^{\alpha}(J_{\lambda_0},\mathbb{R}) \text{ for some } \alpha>0, \mathcal{P}(\phi) = \eta \}/ \sim$$
where $\phi_1 \sim \phi_2$ if $\phi_1$ and  $\phi_2$ are $C^0$-cohomologous
on $J_{\lambda_0}$.

\begin{defn}
The {\it thermodynamic mapping} $\mathscr{E} : \Omega \to\mathcal{C}_\eta(J_{\lambda_0})$ 
is defined 
by
$$\mathscr{E}(\lambda) \defeq [-\delta_\eta(\lambda)\log|f'_\lambda \circ \Psi_\lambda|].$$   
\end{defn}

The thermodynamic mapping is well-defined by Lemma \ref{lem_mobius_conj_coho}.

\medskip

Fix $\lambda \in \Omega$ and $\mathscr{E}(\lambda) \in \mathcal{C}_\eta(J_{\lambda_0})$.
Let $\nu$ be the equilibrium state for
a representative $\phi$ of 
$\mathscr E (\lambda)$,
whose existence and uniqueness 
are guaranteed by the fact that $\mathcal{P}(-\delta_\eta(\lambda)\log|f'_\lambda \circ \Psi_\lambda|) = \eta > 0$
and Lemmas \ref{lem_Pressure>0}
and 
\ref{lem_IRL}.
By Proposition \ref{prop_deripressure},
the tangent space 
of $\mathcal{C}_\eta(J_{\lambda_0})$ at $\mathscr{E}(\lambda)$ can be identified with 
$$T_{\mathscr{E}(\lambda)}\mathcal{C}_\eta(J_{\lambda_0}) = \left\{\psi 
\colon \psi
\in C^{\alpha}(J_{\lambda_0},\mathbb{R}) \text{ for some } \alpha>0,
\int_{J_{\lambda_0}} \psi d\nu = 0  \right\} / \sim,$$
where we used the fact
that, by definition, the pressure is constant on $\mathcal C_\eta(J_{\lambda_0})$.
Following \cite[p. 375]{McMullen08}, we define the {\it pressure form $||\cdot||_{pm}$ on $\mathscr{E}(\Omega) \subset \mathcal{C}_\eta(J_{\lambda_0})$}
as follows. 
Given $\psi \in T_{\mathscr{E}(\lambda)}\mathcal{C}_\eta(J_{\lambda_0})$,
we define
$$||[\psi]||^2_{pm} \defeq \frac{{\rm Var}(\psi,\nu)}{\eta -\int_{J_{\lambda_0}} \phi d\nu}.$$

Given $\vec{w} \in T_{\lambda}\Omega$, let $\gamma(t)=[f_t]$,
$t \in (-1,1)$ be a smooth path in $\Omega$
with $\gamma(0)=\lambda$ and $\gamma'(0)= \vec{w}$.
Letting $\phi_t$ be a representative of the class $\mathscr{E}(\gamma(t))$,
we define
\begin{equation} \label{def_pressure_form}
\|\vec{w}\|_{\mathcal P}
\defeq ||\dot{\phi}_0||_{pm} =
\frac{{\rm Var}(\dot{\phi}_0,\nu)}{\eta -\int_{J_{\lambda_0}} \phi_0 d\nu}.
\end{equation}

We
call $\|\cdot\|_{\mathcal P}$
the \emph{pressure form on $\Omega$}.
Observe that 
$||\cdot||_{\mathcal{P}}$ is positive semi-definite on $T_\lambda\Omega$ since 
we have ${\rm Var}(\dot{\phi}_0,\nu) \ge 0$
and $\eta -\int_{J_{\lambda_0}} \phi_0 d\nu > 0$.

\subsection{Conformal equivalence of $\langle \cdot, \cdot \rangle_G$ and $\langle \cdot, \cdot \rangle_{\mathcal P}$ on $\Omega$}\label{sec_conformal_equiv}
We continue to denote by $\lambda$ the elements of $\Omega$, as explained at the beginning of 
the previous section.

Fix 
$\lambda_0 \in \Omega$ and $\vec{v} \in T_{\lambda_0}\Omega$. Let $\gamma(t)$,
$t\in (-1,1)$ be a smooth path in $\Omega$ with $\gamma(0) =\lambda_0$ and $\vec{v} = \gamma'(0)$. Let $\widetilde \gamma(t)$ be a lift of $\gamma(t)$ in $\widetilde{\Omega}$, and denote by $f_t$ a map corresponding to $\widetilde \gamma (t)$.

\medskip

For every $x \in J_{\lambda_0}$ and $t \in (-1, 1)$, set
$$g(t,x) \defeq -\delta_{\eta}(\widetilde\gamma(t))\log|f'_t\circ \Psi_{\widetilde{\gamma}(t)}(x)|.$$ 
Denote by $\dot{g} (0,\cdot)$ and 
$\ddot{g}(0,\cdot)$
 the real-valued functions on $J_{\lambda_0}$ given by
 \[\dot{g}(0,x) = \frac{d}{dt}\Big|_{t=0} g(t,x)
 \quad \mbox{ and }
 \quad
\ddot{g}(0,x) = \frac{d^2}{dt^2}\Big|_{t=0} g(t,x)
\quad
\mbox{ for all } x \in J_{\lambda_0}\]
and denote 
by
$\nu$
the
(unique)
equilibrium state 
of
$g(0,x)=-\delta_{\eta} (\lambda_0) \log |f'_{0}|$.

\begin{prop}\label{prop_conf}
We have
	$$||\vec{v}||_{\mathcal{P}}^2 = \frac{||\vec{v}||_G^2}{\eta -\int_{J_{\lambda_0}}g(0,x) d\nu(x)}.
 $$
In particular, the Hessian
form $||\cdot||_G$ is conformal equivalent
to the pressure form $||\cdot||_{\mathcal{P}}$. 
\end{prop}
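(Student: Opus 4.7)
The plan is to express both forms $\|\vec v\|_G^2$ and $\|\vec v\|_{\mathcal P}^2$ in terms of the two-parameter family $g(t,x)$ and then use Proposition \ref{prop_deripressure} to identify them (up to the conformal factor). The main input is that the pressure of the family $t\mapsto g(t,\cdot)$ is constantly equal to $\eta$, because this is exactly the defining relation for $\delta_\eta(\widetilde\gamma(t))$.

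First, I would select the explicit representative $\phi_t \defeq g(t,\cdot)$ for the class $\mathscr{E}(\gamma(t))\in \mathcal{C}_\eta(J_{\lambda_0})$. With this choice one has pointwise $\dot\phi_0 = \dot g(0,\cdot)$ on $J_{\lambda_0}$. Since $\mathcal P(g(t,\cdot))\equiv \eta$, Proposition \ref{prop_deripressure} applied to the smooth path $\{g(t,\cdot)\}$ in $C^{\alpha}(J_{\lambda_0},\mathbb R)$ (with equilibrium state $\nu$ for $g(0,\cdot)$, whose Lyapunov exponent is positive by Lemma \ref{lem_Pressure>0}) yields
\begin{equation*}
0=\frac{d\mathcal P(g(t,\cdot))}{dt}\Big|_{t=0}=\int_{J_{\lambda_0}}\dot g(0,x)\,d\nu(x),
\end{equation*}
and then, using the second-order formula in the same proposition,
\begin{equation*}
0=\frac{d^2 \mathcal P(g(t,\cdot))}{dt^2}\Big|_{t=0}=\mathrm{Var}\big(\dot g(0,\cdot),\nu\big)+\int_{J_{\lambda_0}}\ddot g(0,x)\,d\nu(x).
\end{equation*}
In particular, $\mathrm{Var}(\dot g(0,\cdot),\nu)=-\int \ddot g(0,x)\,d\nu(x)$.

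Next I would relate the second derivative above to $\|\vec v\|_G^2$. By the definition of $G_{\lambda_0}$ and of $g(t,x)$, along the lifted path $\widetilde\gamma$ we have
\begin{equation*}
G_{\lambda_0}(\widetilde\gamma(t)) = \delta_\eta(\widetilde\gamma(t))\int_{J_{\lambda_0}}\log|f'_t\circ\Psi_{\widetilde\gamma(t)}(x)|\,d\nu(x) = -\int_{J_{\lambda_0}} g(t,x)\,d\nu(x).
\end{equation*}
Differentiating twice in $t$ at $t=0$ and invoking Lemma \ref{rmk_47} (which permits the identification of the Hessian with the second derivative along any smooth path tangent to $\vec v$, using that $DG_{\lambda_0}(\lambda_0)=0$ by Proposition \ref{prop_metric}), we obtain $\|\vec v\|_G^2=-\int \ddot g(0,x)\,d\nu(x)$. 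Combined with the preceding identity, this gives $\|\vec v\|_G^2 = \mathrm{Var}(\dot g(0,\cdot),\nu)$.

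Finally, I would plug this into the definition \eqref{def_pressure_form} of the pressure form. With $\phi_0=g(0,\cdot)$ and $\dot\phi_0=\dot g(0,\cdot)$,
\begin{equation*}
\|\vec v\|_{\mathcal P}^2=\frac{\mathrm{Var}(\dot g(0,\cdot),\nu)}{\eta-\int_{J_{\lambda_0}}g(0,x)\,d\nu(x)}=\frac{\|\vec v\|_G^2}{\eta-\int_{J_{\lambda_0}}g(0,x)\,d\nu(x)},
\end{equation*}
which is exactly the claim; the conformal factor $\eta-\int g(0,\cdot)\,d\nu$ is strictly positive since $\int g(0,\cdot)\,d\nu=-\delta_\eta(\lambda_0)L(\nu)\le 0<\eta$, so that the two forms are conformally equivalent on all of $\Omega$. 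The only technical point to verify carefully is the applicability of Proposition \ref{prop_deripressure}, i.e., that $t\mapsto g(t,\cdot)$ is indeed a smooth path in some $C^\alpha(J_{\lambda_0},\mathbb R)$; this is a routine consequence of the real-analyticity of $\delta_\eta$ proved in Proposition \ref{prop_deltaanalytic}, of the holomorphic dependence of $f_t$ on $t$, and of the H\"older regularity of $\Psi_{\widetilde\gamma(t)}$ (guaranteed by stability of $\widetilde\Omega$).
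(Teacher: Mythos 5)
Your proposal is correct and follows essentially the same route as the paper: identify $\vec v$ with $\dot g(0,\cdot)$, use the constancy $\mathcal P(g(t,\cdot))\equiv\eta$ together with Proposition \ref{prop_deripressure} to get ${\rm Var}(\dot g(0,\cdot),\nu)=-\int\ddot g(0,x)\,d\nu$, and compute $\|\vec v\|_G^2$ by differentiating $G_{\widetilde\gamma(0)}(\widetilde\gamma(t))=-\int g(t,x)\,d\nu$ twice via Lemma \ref{rmk_47}. The additional remarks on the positivity of the conformal factor and the $C^\alpha$-smoothness of $t\mapsto g(t,\cdot)$ are consistent with what the paper already established.
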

\begin{proof}
By the definition of $g(t,x)$, $\vec{v}$ can be identified with
$\dot{g}(0,\cdot)$.
By \eqref{def_pressure_form} and
Proposition \ref{prop_deripressure} (2), we have
$$||\vec{v}||_\mathcal{P}^2 = \frac{{\rm Var}(\dot{g}(0,x), \nu)}{\eta -\int_{J_{\lambda_0}}g(0,x) d\nu(x)} = \frac{-\int_{J_{\lambda_0}} \ddot{g}(0,x) d\nu(x)}{\eta - \int_{J_{\lambda_0}}g(0,x) d\nu(x)}.$$
Hence, it is enough to show that we have
\[
||\vec{v}||_G^2 = -\int_{J_{\lambda_0}} \ddot{g}(0,x) d\nu(x).
\]
By the Definition
\ref{def_G_f} 
of $G_{\widetilde{\gamma}(0)} (\widetilde{\gamma}(t))$, 
we have
\[
G_{\widetilde{\gamma}(0)} (\widetilde{\gamma}(t)) 
= 
\delta_{\eta}(\widetilde{\gamma}(t)) \int_{J_{\lambda_0}} \log|f'_t \circ \Psi_{\tilde{\gamma}(t)}|d\nu=
- \int_{J_{\lambda_0}} g(t,x)
d\nu (x).
\]
The assertion follows from the Definition
\ref{def_metric_G} 
of $||\cdot||_G$, after taking two derivatives in $t$ in the last expression.
\end{proof}

\begin{cor}\label{cor_G}
The following assertions
are equivalent:
\begin{enumerate}
\item $||\vec{v}||_G = 0$;
\item $||\vec{v}||_{\mathcal{P}} =0$;
\item ${\rm Var}(\dot{g}(0,x), \nu) = 0$;
\item $\dot{g}(0,x)$ is
a $C^0$-coboundary, i.e., it is $C^0$-cohomologous to zero.
\end{enumerate}
\end{cor}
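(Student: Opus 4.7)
The proof will follow by chaining together the stated equivalences using results already established. The only subtle point is verifying that the common denominator appearing in both Proposition~\ref{prop_conf} and the definition~\eqref{def_pressure_form} is strictly positive.

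For $(1) \Leftrightarrow (2)$ and $(2) \Leftrightarrow (3)$, I would combine the two identities
\[
\|\vec{v}\|_{\mathcal P}^2 \;=\; \frac{\|\vec{v}\|_G^2}{\eta - \int_{J_{\lambda_0}} g(0,x)\, d\nu(x)} \;=\; \frac{\mathrm{Var}(\dot g(0,\cdot),\nu)}{\eta - \int_{J_{\lambda_0}} g(0,x)\, d\nu(x)},
\]
where the first is Proposition~\ref{prop_conf} and the second is \eqref{def_pressure_form}, after identifying $\vec{v}$ with $\dot g(0,\cdot)$ as in the proof of Proposition~\ref{prop_conf}. To see that the common denominator is strictly positive, note that $\nu$ is the equilibrium state of $g(0,\cdot) = -\delta_\eta(\lambda_0)\log|f'_{\lambda_0}|$ with pressure equal to $\eta$. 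The variational principle then gives $\eta - \int g(0,\cdot)\, d\nu = h_\nu(f_{\lambda_0}) = \eta + \delta_\eta(\lambda_0) L_\nu$, which is strictly positive since $L_\nu > 0$ by Lemma~\ref{lem_Pressure>0}. Once the positivity of the denominator is known, both equivalences are immediate: $\|\vec v\|_G^2$, $\|\vec v\|_{\mathcal P}^2$ and $\mathrm{Var}(\dot g(0,\cdot),\nu)$ vanish simultaneously.

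For $(3) \Leftrightarrow (4)$, I would apply Lemma~\ref{lem_variance_cobdry} with $\phi = g(0,\cdot)$ and $\psi = \dot g(0,\cdot)$. The hypotheses of that lemma, namely the existence of a unique equilibrium state of $\phi$ and the positivity of its Lyapunov exponent, are supplied respectively by Lemma~\ref{lem_IRL} together with the spectral gap property~{\bf (N5)} (both applicable because $\mathcal{P}(\phi) = \eta > 0$) and by Lemma~\ref{lem_Pressure>0}. No genuine obstacle is to be expected: the entire argument is a direct concatenation of previously proved facts, with the only substantive bookkeeping being the verification of the strict positivity of the denominator above.
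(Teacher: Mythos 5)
Your proof is correct and follows essentially the same route as the paper: the first three equivalences come from Proposition~\ref{prop_conf} together with \eqref{def_pressure_form}, and the last from Lemma~\ref{lem_variance_cobdry}. Your additional bookkeeping verifying the strict positivity of the denominator (via $h_\nu = \eta + \delta_\eta(\lambda_0) L_\nu > 0$) is a correct and helpful detail that the paper leaves implicit, having already noted it when defining the pressure form in Section~\ref{ss:P-omega}.
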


\begin{proof}
The equivalence between the first three assertions
immediately follows
from Proposition \ref{prop_conf}. 
The equivalence between (3) and (4) follows from Lemma \ref{lem_variance_cobdry}.
\end{proof}

We conclude this section with the following lemma, which we will need to prove
that the forms introduced so far induce a path metric on $\Omega$. 

\begin{lem} \label{lem_K_equation}
If $||\vec{v}||_G=0$, then there exists a constant $K \in \bbR$ such that, for every $n\in\mathbb N$, we have 
\begin{align*}
\frac{d}{dt}\Big|_{t=0} 
S_n\big(
\log|f'_t\circ \Psi_{\widetilde{\gamma}(t)}(x)|\big)
&= K \cdot S_n
\big(
\log|f' \circ \Psi_{\widetilde{\gamma}(0)}(x)|\big)
\end{align*} for all 
$n$-periodic
points
$x$ of $f$ 
in $J_{\lambda_0}$.
Here $S_n\phi$ denotes the Birkhoff sum of $\phi$. 
\end{lem}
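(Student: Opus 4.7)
The plan is to deduce this from Corollary \ref{cor_G} (specifically, the implication $(1) \Rightarrow (4)$), which identifies $\|\vec{v}\|_G=0$ with the $C^0$-coboundary condition on $\dot{g}(0,\cdot)$, and then exploit the fact that coboundaries telescope to zero along periodic orbits.

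More precisely, I would first unfold the definition
\[
g(t,x) = -\delta_{\eta}(\widetilde{\gamma}(t))\log|f'_t \circ \Psi_{\widetilde{\gamma}(t)}(x)|
\]
and differentiate in $t$ at $t=0$. Using that $\Psi_{\widetilde{\gamma}(0)} = \mathrm{id}$ and writing $\delta'_\eta \defeq \frac{d}{dt}\big|_{t=0}\delta_{\eta}(\widetilde{\gamma}(t))$, this gives
\[
\dot{g}(0,x) = -\delta'_\eta \cdot \log|f'_0(x)| - \delta_{\eta}(\lambda_0) \cdot \frac{d}{dt}\Big|_{t=0}\log|f'_t \circ \Psi_{\widetilde{\gamma}(t)}(x)|.
\]
Assuming $\|\vec{v}\|_G = 0$, Corollary \ref{cor_G} produces a continuous function $h\colon J_{\lambda_0} \to \mathbb{R}$ with $\dot{g}(0,\cdot) = h - h\circ f_{\lambda_0}$.

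Next, I would take the $n$-th Birkhoff sum of both sides of the above coboundary identity along an $n$-periodic point $x$. The telescoping property of coboundaries along a periodic orbit gives $S_n(h - h\circ f_{\lambda_0})(x) = h(x) - h(f_{\lambda_0}^n(x)) = 0$. Since the Birkhoff sum is a finite sum, we may interchange $d/dt$ with $S_n$, yielding
\[
-\delta'_\eta \cdot S_n(\log|f'_0|)(x) - \delta_{\eta}(\lambda_0) \cdot \frac{d}{dt}\Big|_{t=0} S_n\big(\log|f'_t \circ \Psi_{\widetilde{\gamma}(t)}|\big)(x) = 0.
\]
Since $\eta > 0$, Lemma \ref{lem_pressure_decreasing} guarantees $\delta_\eta(\lambda_0) > 0$, so we may divide and set $K \defeq -\delta'_\eta/\delta_{\eta}(\lambda_0)$, which does not depend on $n$ or on $x$. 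Recalling that $\Psi_{\widetilde{\gamma}(0)}$ is the identity, this is exactly the claimed identity.

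The argument is short, and the only real ingredient is the equivalence between $\|\vec{v}\|_G = 0$ and $\dot{g}(0,\cdot)$ being a $C^0$-coboundary; the subsequent computation is purely formal. The substantive content (the passage from an $L^2$-coboundary to a $C^0$-coboundary, so that the telescoping is literally pointwise on every periodic orbit) has already been established in Lemma \ref{lem_variance_cobdry} via the spectral gap from \cite{BD23eq1,BD24eq2}, so no further analytic difficulty arises here.
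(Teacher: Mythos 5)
Your proof is correct and follows essentially the same route as the paper: apply Corollary \ref{cor_G} to get $\dot g(0,\cdot)=h-h\circ f_{\lambda_0}$ with $h$ continuous, telescope along the $n$-periodic orbit to kill the coboundary, and then unfold $\dot g(0,\cdot)$ via the product rule and divide by $\delta_\eta(\lambda_0)>0$ to isolate $K=-\delta'_\eta/\delta_\eta(\lambda_0)$. The only difference from the paper is cosmetic — you differentiate the product before summing whereas the paper sums first and then invokes the chain rule — and your inclusion of the explicit reminder that $\Psi_{\widetilde\gamma(0)}=\mathrm{id}$ is a harmless clarification.
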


\begin{proof}
By the assumption on 
$||\vec{v}||_G$
and Corollary \ref{cor_G}, the derivative $\dot{g}(0,x)$ of the map $g(t,x)= -\delta(f_t)\log|f_t'\circ \Psi_{f_t}(x)|$ is a $C^0$-coboundary.  
Hence,
there exists a continuous 
function $h: J_{f_0}\to\mathbb{R}$ such that $\dot{g}(0,x)=h(x)-h(f_{0}(x))$
for every 
$x\in J_{f_0}$.
Let $x \in J_{f_0}$
be a $n$-periodic point of $f_{0}$. 
Then, we have
\begin{align*} 
0&= h(x) - h(f_{0}^n(x)) = h(x) - h(f_{0}(x)) + h(f_{0}(x)) - \cdots - h(f_{0}^n(x))\\
&= \frac{d}{dt}\bigg|_{t=0} g(t,x)+\frac{d}{dt}\bigg|_{t=0} g(t,f_{0}(x))+\cdots+\frac{d}{dt}\bigg|_{t=0} g(t,f_{0}^{n-1}(x))\\
&=-\frac{d}{dt}\bigg|_{t=0} \delta_{\eta}(f_t)S_n\big( \log|f'_t\circ \Psi_{f_t}(x)|\big).
\end{align*} 
Applying the chain rule and using the inequality
$\delta_\eta (f_0)>0$,
we obtain
\begin{equation*}
\frac{d}{dt}\Big|_{t=0} S_n
\big(\log|f'_t\circ \Psi_{f_t}(x)|\big)
= -\frac{\frac{d}{dt}\big|_{t=0} \delta_{\eta}(f_t)}{\delta_{\eta}(f_0)} \cdot S_n
\big(\log|f' \circ \Psi_{f}(x)|\big). 
\end{equation*}
Therefore, the assertion follows choosing
$K \defeq \frac{d}{dt}\big|_{t=0} \delta_{\eta}(f_t)/\delta_{\eta}(f_0)$.
\end{proof}

\section{The Hessian form defines a path metric}\label{sec_pathmetric}

We prove our main result, Theorem \ref{thm_main},
in this section. We first prove two preliminary results in Sections \ref{sec_ana_UTB}--\ref{sec_C1_paths} stating that $\langle \cdot, \cdot \rangle_G$ is real-analytic on the unit tangent bundle $UT\Omega$
of $\Omega$, and that any $C^1$-path in $\Omega$ has strictly positive length with respect to $\langle \cdot, \cdot \rangle_G$. We conclude the proof of Theorem \ref{thm_main} in Section \ref{sec_Pf_Thm}.

\subsection{Analyticity of $\langle \cdot, \cdot \rangle_G$ on the unit tangent bundle} \label{sec_ana_UTB}
We fix in this section a $\Lambda$-hyperbolic component $\Omega$ in a parabolic family $\Lambda$ in ${\rm rat}^{cm}_D$
and
we show that the bilinear form $\langle \cdot, \cdot \rangle_G$ is analytic on the unit tangent bundle $UT\Omega$ 
of $\Omega$. Recall that this form depends on a parameter $\eta \in (0,\log D)$, which will be fixed throughout this section.

\medskip

As in Section \ref{ss:P-omega}, 
we will 
think of $\Omega$ and
$\Lambda$
as subfamilies of
${\rm Rat}_D^{cm}$, by means of suitable lifts.
We fix $\lambda_0$ and only work in a 
sufficiently small neighbourhood of $\lambda_0$ in $\Omega$.
By means of suitable charts, we can then assume that $\lambda_0=0$
and that $D_\ell(0,R_0)\subset \Omega$, where $\ell$ is the complex dimension of $\Omega$ and $\Lambda$.

\medskip

Let $\{\vec{v}_s\}_{s\in D_1(0,R_0)}$ be a holomorphic family of elements 
of
$ \mathbb{C}^\ell \setminus\{\vec{0}\}$, 
i.e., we assume that the map $s \mapsto 
\vec{v}_s\in \mathbb C^\ell \setminus\{\vec{0}\}$ is holomorphic.
Observe also that we can
identify $T_\lambda \Omega$ with 
$\mathbb C^\ell$ 
for every $\lambda \in D_\ell (0,R_0)$.
Consider the map $\gamma\colon D_{\ell}
(0,R_0)
\times D_1(0,R_0)\times D_1 (0,R_0)\to \mathbb C^\ell$ given by
\[
\gamma (\lambda, t,s ) = \lambda + t\vec{v}_s.
\]
Up to shrinking $R_0$, we can assume that the image of $\gamma$ is contained in $\Omega$. 
Moreover, it is clear from the definition that $\gamma$ satisfies
\[
\gamma(\lambda,0, s) = \lambda 
\quad \mbox{ and }
\quad
\frac{d}{dt}\Big|_{t=0}
\gamma (\lambda, t, s) = \vec{v}_s \in T_{\lambda}{\Omega}
\quad \mbox{ for all } \lambda \in D_\ell (0,R_0)
\mbox{ and }
s \in D_1 (0,R_0).
\]

For every
$(\theta,\lambda, t,s) \in \bbR \times D_\ell(0,R_0)\times D_1(0,R_0)\times D_1(0,R_0)$, we also
define
$$\phi_{(\theta,\lambda, t,s)} \defeq -\delta_\eta(\lambda)\log|f'_\lambda| + \theta\log |f'_{\gamma(\lambda, t,s)}\circ\Psi_{\lambda, t,s}|:J_\lambda \to \bbR$$
where we denote by $\Psi_{\lambda, t,s}:J_\lambda \to J_{\lambda,t,s}$ the conjugacy map induced by the holomorphic motion on $\Omega$.

\begin{prop}\label{prop_metric_an}
There exists $0<R<R_0$ such that the map $(\delta_\eta(\lambda_0)-R,\delta_\eta(\lambda_0)+R)\times D_\ell(0,R)\times D_1(0,R)\times D_1(0,R)\ni (\theta,\lambda,t,s) \mapsto \mathcal{P}(\phi_{(\theta,\lambda,t,s)})$ is real-analytic.
\end{prop}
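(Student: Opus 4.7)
The strategy parallels that of Proposition \ref{prop_analytic_Mis}, now treating the four parameter blocks jointly. The plan is as follows. First I would pull the potentials $\phi_{(\theta,\lambda,t,s)}$ back via the conjugacy $\Psi_\lambda:J_0\to J_\lambda$ to obtain H\"older potentials on the common Julia set $J_0$. Since topological pressure is invariant under conjugacy and the holomorphic motion satisfies the cocycle identity $\Psi_{\lambda,t,s}\circ\Psi_\lambda=\Psi_{\gamma(\lambda,t,s)}$, we have $\mathcal P(\phi_{(\theta,\lambda,t,s)})=\mathcal P_{f_0}(\tilde\phi_{(\theta,\lambda,t,s)})$ where
\[
\tilde\phi_{(\theta,\lambda,t,s)}(z)=-\delta_\eta(\lambda)\log|f'_\lambda\circ\Psi_\lambda(z)|+\theta\log|f'_{\gamma(\lambda,t,s)}\circ\Psi_{\gamma(\lambda,t,s)}(z)|.
\]
The problem thus reduces to showing that the right-hand pressure is real-analytic in $(\theta,\lambda,t,s)$.

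Next I would mimic Section \ref{sec_AnalyticityBowenParameter} and set $\psi_z(\mu)\defeq f'_\mu\circ\Psi_\mu(z)/f'_0(z)$ for $\mu$ in a small polydisk around $0$ in $\bbC^\ell$. This is complex analytic in $\mu$, satisfies $|\psi_z-1|<1/5$, and admits a well-defined branch $\log\psi_z$ with $\log\psi_z(0)=0$ and modulus bounded by $1/4$. Applying Lemma \ref{lem_6.3} gives a holomorphic extension $\Re\widetilde{\log\psi_z}:D_{2\ell}(0,R')\to\bbC$ of its real part. Since $\delta_\eta$ is real-analytic on $\Omega$ by Proposition \ref{prop_deltaanalytic}, a further application of Lemma \ref{lem_6.3} (shrinking $R'$ if needed) produces a holomorphic extension $\tilde\delta_\eta:D_{2\ell}(0,R')\to\bbC$. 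Moreover, the parameter map $\gamma(\lambda,t,s)=\lambda+t\vec v_s$ is already jointly holomorphic in the complex variables $(\lambda,t,s)$, so a further shrinking of the domains ensures that $\gamma$ maps into the region where $\Re\widetilde{\log\psi_z}$ is defined. With these ingredients I would define, for $(\theta,\lambda,t,s)\in\bbC\times D_{2\ell}(0,R')\times D_2(0,R')\times D_2(0,R')$,
\[
\zeta_{(\theta,\lambda,t,s)}(z)\defeq\bigl(\theta-\tilde\delta_\eta(\lambda)\bigr)\log|f'_0(z)|-\tilde\delta_\eta(\lambda)\,\Re\widetilde{\log\psi_z}(\lambda)+\theta\,\Re\widetilde{\log\psi_z}(\gamma(\lambda,t,s)).
\]
By construction $\zeta_{(\theta,\lambda,t,s)}$ is jointly holomorphic in the four parameter blocks and agrees with $\tilde\phi_{(\theta,\lambda,t,s)}$ on the real-parameter subset. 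Using Lemma \ref{l:extension} on its real and imaginary parts, I would further extend each $\zeta_{(\theta,\lambda,t,s)}$ to a function $\widetilde\zeta_{(\theta,\lambda,t,s)}:\mathbb P^1\to\bbC$ in a way that remains continuous in the $(\beta/2)$-H\"older norm.

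Finally, I would choose the base point $w_0=(0,0,0,s_0)$, at which the potential reduces to $-\delta_\eta(0)\log|f'_0|$, with pressure $\eta>0$. By Lemmas \ref{lem_Pressure>0} and \ref{lem_IRL}, every equilibrium state at $w_0$ has strictly positive Lyapunov exponent, so assumption (1) of Proposition \ref{p:SU-full} holds; (2) and (3) follow respectively from Lemma \ref{l:extension} and from the joint holomorphicity of $\zeta_{(\theta,\lambda,t,s)}(z)$ in the parameters for each fixed $z\in J_0$. Proposition \ref{p:SU-full} then yields a holomorphic family $(\theta,\lambda,t,s)\mapsto\mathcal L_{\widetilde\zeta_{(\theta,\lambda,t,s)}}\in L(B_\diamond(J))$ on a neighborhood of $w_0$. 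The Kato-Rellich perturbation theorem then guarantees a holomorphic simple eigenvalue $\mathcal A(\theta,\lambda,t,s)$ that coincides with $e^{\mathcal P(\phi_{(\theta,\lambda,t,s)})}$ on the real subset; taking $\log\mathcal A$ on some smaller polydisk gives the desired real-analyticity. The main technical obstacle will be the bookkeeping needed to ensure that, after the successive shrinkings dictated by Lemma \ref{lem_6.3} and by the compositions with $\gamma$ and $\tilde\delta_\eta$, the complexified family $\zeta_{(\theta,\lambda,t,s)}$ remains (i) jointly holomorphic in all four parameter blocks, (ii) uniformly bounded, and (iii) continuous into the Banach space $B_\diamond$, on a common product neighborhood of $w_0$; once that is in place, the argument is a direct multi-parameter lift of Proposition \ref{prop_analytic_Mis}.
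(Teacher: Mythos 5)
Your proposal follows the same overall strategy as the paper: pull the potentials back to the fixed Julia set $J_0$, complexify the $\lambda$-dependence so that a holomorphic family of transfer operators can be built, and then invoke Proposition~\ref{p:SU-full} together with the Kato--Rellich theorem. In fact your decomposition
\[
\zeta_{(\theta,\lambda,t,s)}(z)=\bigl(\theta-\tilde\delta_\eta(\lambda)\bigr)\log|f'_0(z)|-\tilde\delta_\eta(\lambda)\,\Re\widetilde{\log\psi_z}(\lambda)+\theta\,\Re\widetilde{\log\psi_z}(\gamma(\lambda,t,s))
\]
is a cleaner version of the paper's: it isolates the $\lambda$-independent term $\log|f'_0(z)|$, whereas the paper's displayed $\zeta$ still carries $\delta_\eta(\lambda)$ and $\log|f'_\lambda(z_\lambda)|$ for $\lambda\in D_{2\ell}(0,R_0)$ without explaining how those are to be read off the real slice. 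The paper, however, sidesteps one complication you introduce: instead of composing an extension of the one-variable $\Re\log\psi_z$ with $\gamma$, it forms the three-block holomorphic function $\psi_z(\lambda,t,s)=f'_{\gamma(\lambda,t,s)}\circ\Psi_{\lambda,t,s}(z_\lambda)/f'_\lambda(z_\lambda)$ at the outset and feeds that directly to Lemma~\ref{lem_6.3}, so no composition with $\gamma$ on the complexified side is ever needed.

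Two technical points in your plan need to be tightened. First, $\tilde\delta_\eta$ is not obtained by ``a further application of Lemma~\ref{lem_6.3}'': that lemma takes a \emph{complex analytic} $\phi$ as input and extends its real part, whereas $\delta_\eta$ is a priori only real-analytic and need not be the real part of a holomorphic function. The complexification is instead available from the proof of Proposition~\ref{prop_analytic_Mis}: the pressure there is realized as $\log\mathcal A(\theta,\lambda)$ for a holomorphic $\mathcal A$ on $D_1(\delta_\eta(0),R)\times D_{2\ell}(0,R)$, and the holomorphic implicit function theorem applied to $\log\mathcal A(\theta,\lambda)=\eta$ produces the desired $\tilde\delta_\eta:D_{2\ell}(0,R)\to\mathbb C$ agreeing with $\delta_\eta$ on the real slice. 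Second, the term $\Re\widetilde{\log\psi_z}(\gamma(\lambda,t,s))$ does not literally make sense as written: $\Re\widetilde{\log\psi_z}$ is holomorphic on a polydisk in $\mathbb C^{2\ell}$, while $\gamma(\lambda,t,s)=\lambda+t\vec v_s$ takes values in $\mathbb C^{\ell}$, and the identification $\iota_\ell:\mathbb C^\ell\hookrightarrow\mathbb C^{2\ell}$ is only $\mathbb R$-linear. You need to replace $\gamma$ by the holomorphic complexification $\tilde\gamma:D_{2\ell}(0,\cdot)\times D_2(0,\cdot)\times D_2(0,\cdot)\to\mathbb C^{2\ell}$ of the real-analytic (in fact polynomial) map $\iota_\ell\circ\gamma\circ(\iota_\ell\times\iota_1\times\iota_1)^{-1}$; since $\gamma$ is affine in $(\lambda,t)$ and holomorphic in $s$, this complexification exists, restricts to $\iota_\ell\circ\gamma$ on the real slice, and maps a suitable polydisk into $D_{2\ell}(0,R')$, so that $\Re\widetilde{\log\psi_z}\circ\tilde\gamma$ is holomorphic in $(\lambda,t,s)$. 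With these two corrections the rest of your outline — the choice of base point, verification of the hypotheses of Proposition~\ref{p:SU-full}, and the Kato--Rellich step — goes through exactly as in the paper.
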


We will show Proposition \ref{prop_metric_an} in Section \ref{ss:proof-an}.
We first deduce the following
corollary, giving the analyticity of
the metric $||\cdot||_G$
on the tangent bundle of ${\Omega}$.
For every $\lambda \in \Omega$, we denote
by $\nu_\lambda$ the unique equilibrium state
of $-\delta_\eta(\lambda)\log|f'_\lambda|:J_\lambda \to \mathbb R$.

\begin{cor}\label{c:metric-analytic}
There exists $R\in (0,R_0)$ such that
the map 
$$D_\ell(0,R)\times
D_1(0,R)
\times
D_1(0,R)
\ni (\lambda, t,s) \mapsto G_0(\gamma(\lambda,t,s)) = \delta_\eta(\gamma(\lambda,t,s))\int_{J_\lambda} \log|f'_{\gamma(\lambda, t,s)} \circ\Psi_{\lambda, t,s}| d\nu_\lambda$$
is real-analytic. Moreover, the map 
$D_\ell(0,R)\times
D_1 (0,R) \ni (\lambda,s)\mapsto (G_{\lambda}''(\lambda))
(\vec{v}_s, \vec{v}_s)$ 
is real-analytic.
\end{cor}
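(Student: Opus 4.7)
The plan is to derive the corollary as a fairly direct consequence of Proposition \ref{prop_metric_an} combined with the first-derivative formula of Proposition \ref{prop_deripressure} and the Hessian characterization of Lemma \ref{rmk_47}. There is no substantial new analytical content; the work consists of carefully identifying the integral in the statement as a $\theta$-derivative of the pressure function.

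First, I would observe that $\phi_{(0,\lambda,t,s)} = -\delta_\eta(\lambda)\log|f'_\lambda|$ is independent of $(t,s)$ and has pressure $\eta>0$, so by Lemmas \ref{lem_Pressure>0} and \ref{lem_IRL} it admits a unique equilibrium state $\nu_\lambda$ with strictly positive Lyapunov exponent. For fixed $(\lambda,t,s)$, Proposition \ref{prop_deripressure} applied to the smooth path $\theta \mapsto \phi_{(\theta,\lambda,t,s)}$ gives
\[
\frac{\partial}{\partial \theta}\bigg|_{\theta=0} \mathcal{P}(\phi_{(\theta,\lambda,t,s)}) = \int_{J_\lambda} \log|f'_{\gamma(\lambda,t,s)}\circ\Psi_{\lambda,t,s}|\, d\nu_\lambda.
\]
By Proposition \ref{prop_metric_an}, the function $(\theta,\lambda,t,s)\mapsto \mathcal{P}(\phi_{(\theta,\lambda,t,s)})$ is jointly real-analytic in a neighbourhood of $(\delta_\eta(0),0,0,0)$. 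Hence its partial derivative in $\theta$, evaluated at $\theta=0$, is real-analytic in $(\lambda,t,s)$; that is, the integral displayed above is real-analytic in $(\lambda,t,s)$.

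Next, I would note that the prefactor $\delta_\eta(\gamma(\lambda,t,s))$ is also real-analytic in $(\lambda,t,s)$: the map $(\lambda,t,s)\mapsto \gamma(\lambda,t,s)=\lambda+t\vec{v}_s$ is holomorphic in its arguments since $s\mapsto \vec{v}_s$ is holomorphic by assumption, and $\delta_\eta$ is real-analytic on $\Omega$ by Proposition \ref{prop_deltaanalytic}. Taking the product yields the real-analyticity of
\[
(\lambda,t,s)\mapsto \delta_\eta(\gamma(\lambda,t,s))\int_{J_\lambda} \log|f'_{\gamma(\lambda,t,s)}\circ\Psi_{\lambda,t,s}|\, d\nu_\lambda,
\]
which is the first assertion.

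For the second assertion, I would apply Lemma \ref{rmk_47} with base point $\lambda$ and path $t\mapsto \gamma(\lambda,t,s)$, which passes through $\lambda$ at $t=0$ with tangent vector $\vec{v}_s$. This gives
\[
(G_\lambda''(\lambda))(\vec{v}_s,\vec{v}_s) = \frac{\partial^2}{\partial t^2}\bigg|_{t=0} G_\lambda(\gamma(\lambda,t,s)).
\]
Since the map $(\lambda,t,s)\mapsto G_\lambda(\gamma(\lambda,t,s))$ is real-analytic by the first part of the corollary, its second partial derivative in $t$ evaluated at $t=0$ is real-analytic in $(\lambda,s)$, after possibly shrinking $R$. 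This concludes the proof. The only potential obstacle is bookkeeping: checking that the equilibrium state in Proposition \ref{prop_deripressure} corresponds precisely to $\nu_\lambda$ (which is automatic because $\phi_{(0,\lambda,t,s)}$ does not depend on $(t,s)$), and that the domain of joint analyticity is large enough to accommodate taking two $t$-derivatives at $t=0$ uniformly in $(\lambda,s)$, which is ensured by the polydisk form of the domain in Proposition \ref{prop_metric_an}.
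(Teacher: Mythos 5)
Your proposal is correct and follows essentially the same route as the paper: identifying the integral $\int_{J_\lambda}\log|f'_{\gamma(\lambda,t,s)}\circ\Psi_{\lambda,t,s}|\,d\nu_\lambda$ as the $\theta$-derivative at $\theta=0$ of $\mathcal{P}(\phi_{(\theta,\lambda,t,s)})$ via Proposition \ref{prop_deripressure}, invoking the joint analyticity of Proposition \ref{prop_metric_an}, multiplying by the analytic factor $\delta_\eta(\gamma(\lambda,t,s))$ from Proposition \ref{prop_deltaanalytic}, and obtaining the Hessian statement by differentiating twice in $t$ at $t=0$ (the paper does this directly, using implicitly the same identification you make explicit through Lemma \ref{rmk_47}). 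Your additional verification that $\nu_\lambda$ is indeed the equilibrium state of $\phi_{(0,\lambda,t,s)}$, which is independent of $(t,s)$, is exactly the bookkeeping the paper leaves implicit.
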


\begin{proof}
By Propositions \ref{prop_deripressure} and \ref{prop_metric_an}, taking the first derivative with respect to $\theta$ of the pressure function  $\mathcal{P}(\phi_{(\theta,\lambda,t,s)})$,
for every $(\lambda, t,s)\in D_\ell (0,R)\times D_1 (0,R)\times D_1 (0,R)$
we have
$$\frac{d}{d\theta}
\Big|_{\theta=0}
\mathcal 
P(-\delta_\eta(\lambda)\log|f'_\lambda| + \theta\log |f'_{\gamma(\lambda, t,s)}\circ\Psi_{\lambda, t,s}|) = \int_{J_\lambda} \log|f'_{\gamma(\lambda, t,s)}\circ\Psi_{\lambda, t,s}| d\nu_\lambda.$$
Hence, again 
by Proposition \ref{prop_metric_an}, the map
$$D_\ell(0,R)
\times D_1 (0,R)
\times D_1 (0,R)
\ni (\lambda, t,s) \mapsto \int_{J_\lambda} \log|f'_{\gamma(\lambda, t,s)}\circ\Psi_{\lambda, t,s}| d\nu_\lambda$$ is real-analytic. By Proposition \ref{prop_deltaanalytic} and the definition of $\gamma(\lambda,t,s)$, the map 
\begin{equation*}\label{eq_5.2}
D_\ell(0,R)\times
D_1(0,R)
\times
D_1(0,R)
\ni (\lambda, t,s) \mapsto G_0(\gamma(\lambda,t,s)) = \delta_\eta(\gamma(\lambda,t,s))\int_{J_\lambda} \log|f'_{\gamma(\lambda, t,s)} \circ\Psi_{\lambda, t,s}| d\nu_\lambda
\end{equation*}
is real-analytic. This gives
the first assertion.

\medskip

Taking two derivatives in $t$ of the function $G_0(\gamma(\lambda,t,s))$ and evaluating 
at $t=0$, we see that the map
\begin{equation*}\label{eq_5.21}
D_\ell(0,R)\times
D_1(0,R)
\ni (\lambda, s) \mapsto \frac{d^2}{dt^2}\bigg|_{t=0} G_0(\gamma(\lambda,t,s)) = (G_\lambda''(\lambda))(\vec{v}_s,\vec{v}_s) 
\end{equation*}
is real-analytic. This completes the proof.
\end{proof}

\subsection{Proof of Proposition \ref{prop_metric_an}}
\label{ss:proof-an}
We again follow the
general strategy as the proof of
\cite[Theorem A]{SU10} or \cite[Theorem 9.3]{UZ04real}. As the arguments are similar to those of the proof of Proposition \ref{prop_analytic_Mis}, 
we will just sketch them.

\medskip

For every $z \in J_{0}$ and $\lambda \in \Omega$,
we denote
$z_\lambda \defeq \Psi_\lambda(z)$. For 
every
$z \in J_{0}$ and $(\lambda,t,s) \in
D_\ell({0},R_0)
\times
D_1({0},R_0)
\times 
D_1({0},R_0)$,
consider the map
$$\psi_{z}(\lambda, t,s) \defeq \frac{f'_{\gamma (\lambda, t,s)}\circ \Psi_{\lambda, t,s}(z_\lambda)}{f'_\lambda(z_\lambda)},$$
where we recall that
$\Psi_{\lambda, t,s}:J_\lambda \to J_{\lambda, t,s}$ 
is the conjugacy map. 
Up to 
shrinking $R_0$ if necessary, for all $z \in J_{0}$ 
and $(\lambda,t,s)\in D_\ell(0,R_0)
\times D_1({0},R_0)
\times D_1({0},R_0)$,
we have
$|\psi_{z}(\lambda,t,s)-1|<1/5$.
Then, 
for every $z \in J_{0}$,
there exists a branch of $\log\psi_z$ 
sending $0$ to $0$ and whose 
modulus is bounded by $1/4$.
By Lemma \ref{lem_6.3}, up to further shrinking $R_0$, the analytic
map
$\Re\log \psi_{z} : D_\ell (0,R_0)
\times D_1({0},R_0)
\times D_1({0},R_0)\to \mathbb{R}$ has an analytic extension $\Re \widetilde{\log \psi_{z}} : 
D_{2\ell} (0,R_0)
\times D_2({0},R_0)
\times D_{2}({0},R_0)
\to \mathbb{C}$. 
Recall that $D_\ell(0,R_0)$
(resp. $D_\ell(0,R_0)$)
is seen
as a subset of the points
of  
$\mathbb C^{2\ell}$
(resp. $\mathbb C$)
with real coordinates
by means of the immersion $\iota_\ell$
(resp. $\iota_1$)
as in \eqref{eq_iota}, and that 
we have
$\iota_\ell (0)=0\in \mathbb C^{2\ell}$
and
$\iota_1 (0)=0\in \mathbb C^{2}$.

\medskip

For $(\theta,\lambda, t,s) \in \bbC \times 
D_{2\ell} (0,R_0)
\times D_2({0},R_0)
\times D_{2}({0},R_0)$, consider the map 
$\zeta_{(\theta,\lambda, t,s)}\colon J_0 \to \mathbb C$ given by
\begin{align*}
\zeta_{(\theta,\lambda, t,s)}(z) &\defeq -\delta_\eta(\lambda)\log|f'_\lambda(z_\lambda)|-\theta \Re\widetilde{\log \psi_{z}}(\lambda,t,s) + \theta\log|f'_\lambda(z_\lambda)|\\
&= -\theta \Re \widetilde{\log \psi_{z}}(\lambda,t,s)+(\theta-\delta_\eta(\lambda))\log|f'_\lambda(z_\lambda)|.
\end{align*}

Let $\beta$
be such that the conjugacy maps
$\Psi_\lambda : J_{0} \to J_\lambda$
and
 $\Psi_{\lambda,t,s} : J_{\lambda} \to J_{\lambda,t,s}$ are
 $\beta$-H\"older continuous 
 for all $\lambda \in D_\ell(0,R_0)$
and all $(\lambda,t,s) \in D_\ell(0,R_0)\times D_1(0,R_0) \times
D_1 (0,R_0)$, respectively.
The map $(\theta,\lambda,t,s) \mapsto \zeta_{(\theta,\lambda,t,s)}$ is then 
continuous with respect to the $\beta$-H\"older norm.
Applying 
Lemma \ref{l:extension} to the
families
$\Re \zeta_{(\theta,\lambda,t,s)}$
and 
$\Im \zeta_{(\theta,\lambda,t,s)}$,
up to shrinking $R_0$, 
gives an extended family
$\mathbb{C}\times 
D_{2\ell} (0,R_0)
\times D_2({0},R_0)
\times D_{2}({0},R_0)
\ni (\theta,\lambda,t,s) \mapsto \widetilde{\zeta}_{(\theta,\lambda,t,s)} : \mathbb P^1 \to \mathbb C$ which is
continuous with respect to the $\beta/2$-H\"older norm and satisfies
$ \widetilde{\zeta}_{(\theta,\lambda,t,s)} (z)=
{\zeta}_{(\theta,\lambda,t,s)} (z)$
for every 
$(\theta,\lambda,t,s) \in \mathbb{C} \times \iota_{\ell+2}^{-1}
(D_{2\ell}(0,R_0)
\times 
D_{2}(0,R_0)
\times
D_{2}(0,R_0))$
and $z\in J_0$. 

\medskip

Fix a norm 
$||\cdot||_\diamond$ 
as in Section \ref{ss:operators}
such that $||\cdot||_\diamond \lesssim ||\cdot||_{C^{\beta/2}}$ 
and,
for $(\theta,\lambda,t,s) \in \mathbb{C}\times
D_{2\ell}(0,R_0)
\times 
D_{2}(0,R_0)
\times
D_{2}(0,R_0)$,
consider the complex transfer operator $\mathcal{L}_{(\theta,\lambda,t,s)} \defeq \mathcal{L}_{\widetilde \zeta_{(\theta,\lambda,t,s)}}$.
As in Lemma \ref{lem_op_hol},
there exists $0<R_1<R_0$
such that the map $D_1(\delta_\eta(0),R_1) \times 
D_{2\ell}(0,R_0)
\times 
D_{2}(0,R_0)
\times
D_{2}(0,R_0)
\ni (\theta,\lambda,t,s)\mapsto \mathcal{L}_{\widetilde \zeta_{(\theta,\lambda, t,s)}} \in L(B_\diamond (J_0))$
is holomorphic. 
As for
Proposition 
\ref{prop_analytic_Mis},
Proposition \ref{prop_metric_an} then follows 
from Kato-Rellich
perturbation theorem.

\subsection{Non-degeneracy along $C^1$ paths} \label{sec_C1_paths}
We fix in this section a {\it bounded}
$\Lambda$-hyperbolic
component $\Omega$ of a parabolic subfamily 
$\Lambda$ in ${\rm poly}_D^{cm}$. 
Using the metric $||\cdot||_G$, given a $C^1$ path $\gamma \colon (0,1)\to \Omega$, we define the {\it length} $\ell_G(\gamma)$ of $\gamma$ as
$$\ell_G(\gamma) \defeq \int_0^1 \|\gamma'(t)\|_G dt.$$
The main result of this section is the following proposition,
which states that the metric assigns a positive length to any (non-trivial) $C^1$ path in $\Omega$. The assumptions
on the polynomial family and on the boundedness of $\Omega$
in our main theorem will be used in the proof of this result.

\begin{prop}\label{p:non-deg-analytic-paths}
We have $\ell_G(\gamma)>0$ for any non-trivial
$C^1$ path $\gamma \colon (0,1)\to \Omega$.
\end{prop}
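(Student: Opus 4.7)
The plan is to argue by contradiction. Suppose there is a non-trivial $C^1$ path $\gamma : (0,1) \to \Omega$ with $\ell_G(\gamma) = 0$. By Corollary \ref{c:metric-analytic}, the map $(\lambda, \vec v) \mapsto \|\vec v\|_G^2 = G''_\lambda(\lambda)(\vec v, \vec v)$ depends real-analytically on $(\lambda, \vec v)$; in particular, $t \mapsto \|\gamma'(t)\|_G$ is continuous on $(0,1)$. As its integral vanishes, we must have $\|\gamma'(t)\|_G = 0$ for every $t$. Since $\gamma$ is non-constant, the open set $I \defeq \{t \in (0,1) : \gamma'(t) \neq 0\}$ is non-empty, and for any $t_0 \in I$ the vector $\vec v \defeq \gamma'(t_0) \in T_{\lambda_0}\Omega$ is non-zero, where $\lambda_0 \defeq \gamma(t_0)$.

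The key step is to show that the constant $K = K(t_0)$ from Lemma \ref{lem_K_equation} vanishes. The plan is to use Corollary \ref{cor_G}(4), together with the identity computed in the proof of Lemma \ref{lem_K_equation}, to write
\begin{equation*}
\dot\phi_{t_0}(x) \defeq \frac{d}{dt}\Big|_{t=t_0} \log\bigl|f'_{\gamma(t)}\circ \Psi_{\gamma(t)}(x)\bigr|
= K\log|f'_{\lambda_0}(x)| + h(x) - h(f_{\lambda_0}(x))
\end{equation*}
on $J_{\lambda_0}$, for some continuous $h$. Integrating against the $f_{\lambda_0}$-invariant measure of maximal entropy $\mu_{\lambda_0}$ kills the coboundary and gives $\int \dot\phi_{t_0}\,d\mu_{\lambda_0} = K\, L(\lambda_0) = K\log D$. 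On the other hand, since the conjugacy $\Psi_{\gamma(t)}$ pushes $\mu_{\lambda_0}$ forward to $\mu_{\gamma(t)}$ by uniqueness of the measure of maximal entropy under topological conjugacy, one has $\int \log|f'_{\gamma(t)}\circ \Psi_{\gamma(t)}|\,d\mu_{\lambda_0} = L(\gamma(t))$, and Lemma \ref{lem_homology}(2) gives $L(\gamma(t)) \equiv \log D$ on $\Omega$. Differentiating under the integral at $t_0$ then yields $\int \dot\phi_{t_0}\,d\mu_{\lambda_0} = 0$, forcing $K = 0$.

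With $K(t_0) = 0$ for every $t_0 \in I$, Lemma \ref{lem_K_equation} gives
\begin{equation*}
\frac{d}{dt}\Big|_{t=t_0} \log\bigl|(f_{\gamma(t)}^n)'(\Psi_{\gamma(t)}(x))\bigr| = 0
\end{equation*}
for every $n \geq 1$ and every $n$-periodic $x$ of $f_{\lambda_0}$. Integrating this identity along each connected component $I_0$ of $I$, one concludes that, for every holomorphic periodic section $\lambda \mapsto y(\lambda)$ on $\Omega$, the pluriharmonic function $M^{(y)}_n(\lambda) \defeq \log|(f_\lambda^n)'(y(\lambda))|$ is constant along $\gamma(I_0)$. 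Equivalently, $\gamma'(t_0)$ lies in the common real kernel at $\gamma(t_0)$ of the $1$-forms $\{dM^{(y)}_n\}$ as $n,y$ range over all repelling periodic orbits.

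The main obstacle is the final step, which is precisely where the hypotheses that $\Lambda \subset {\rm poly}_D^{cm}$ and $\Omega \Subset \Lambda$ are used: one has to show that this common kernel is trivial, yielding $\gamma'(t_0) = 0$ and contradicting $t_0 \in I$. By Lemma \ref{lem_homology}(1), on $\Omega$ every critical point of $f_\lambda$ has bounded orbit, so $f_\lambda$ lies in the connectedness locus for each $\lambda\in\Omega$. The plan is then to exploit rigidity of the multiplier spectrum in this setting to conclude that the real differentials of the pluriharmonic functions $\{M^{(y)}_n\}_{n,y}$ span $T^*_{\gamma(t_0)}\Omega$, ruling out any non-trivial common level set of all multiplier-modulus functions through $\gamma(t_0)$ and thereby completing the proof.
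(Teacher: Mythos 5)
Your argument up to the conclusion that $K(t_0)=0$ is correct and, in fact, takes a cleaner route than the paper's. You obtain it by differentiating the identity $L(\gamma(t))\equiv\log D$ (from Lemma \ref{lem_homology}(2)) under the integral against $\mu_{\lambda_0}$, pushed forward by the conjugacy $\Psi_{\gamma(t)}$, and matching the result with $K\log D$ coming from the coboundary formula of Corollary \ref{cor_G}. The paper reaches the equivalent statement — that $\widetilde K(s,s_0)=\int_{s_0}^s K(\gamma(u))\,du$ is independent of $s$ — by passing through the equidistribution of periodic orbits with respect to the measure of maximal entropy. Both routes rely on boundedness of $\Omega$ through Lemma \ref{lem_homology}(2). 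After this step you correctly conclude that the moduli of all repelling multipliers $|(f^n_{\gamma(t)})'(x(\gamma(t)))|$ are constant along $\gamma$.

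However, the last step — which you yourself flag as the ``main obstacle'' — is a genuine gap, not a detail. Saying one will ``exploit rigidity of the multiplier spectrum'' does not produce the contradiction; a concrete input is required. The paper invokes \cite[Theorem~8.25]{JiXie23}: outside the locus of flexible Latt\`es maps (which is empty in a polynomial family), only finitely many M\"obius conjugacy classes of degree-$D$ rational maps share a given collection of moduli of repelling multipliers. Since the non-trivial connected arc $\gamma(I_0)\subset\Omega$ would have all these moduli constant, it would have to lie in a finite set, a contradiction. (The paper's subsequent remark offers an alternative ending, without Ji--Xie, via the maximum principle applied to the pluriharmonic functions $\lambda\mapsto\log|(f^n_\lambda)'(x_i(\lambda))|$ on $\Lambda\cong\mathbb C^\ell$, again using the polynomial hypothesis through Przytycki's formula.) Note also that the stronger statement you aim for — that the real differentials $\{dM^{(y)}_n\}$ span $T^*_{\gamma(t_0)}\Omega$, so that $\gamma'(t_0)=0$ — does not obviously follow from either of these inputs and is not needed: it suffices to know the common level set of the multiplier-modulus functions through $\gamma(t_0)$ is discrete.
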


Observe that, if $\ell_G(\gamma)=0$, we must have $\|\gamma'(t)\|_G=0$ for almost every $t\in (0,1)$
(and hence, by continuity, for all $t\in (0,1)$).
For simplicity, we will say that the metric is
\emph{degenerate along
$\gamma$} if this happens.
The following lemma, which follows immediately from Lemma \ref{lem_K_equation}, characterizes when such a situation can occur.

\begin{lem}
Let
$\gamma\colon (0,1)\to \Omega$ 
be a $C^1$ path. Then 
the metric $||\cdot||_G$ is degenerate along $\gamma$ if and only if for
every $t \in (0,1)$ and for every $n \in \mathbb{N}$, we have
\begin{equation*}
\frac{d}{dt} \Big|_{t=0}
S_n \big(
\log |f'_{\gamma(t)} (x(\gamma(t)))|\big)
= K(\gamma(t)) 
S_n \big(\log |f'_{\gamma(t)}(x(\gamma(t)))|\big)
\end{equation*}
for every repelling $n$-periodic
point $x(\gamma(t))$, where $K(\gamma(t)) \defeq 
\delta_{\eta} (\gamma(0))^{-1}
\frac{d}{dt}\big|_{t=0} \delta_{\eta}(\gamma(t))$.
\end{lem}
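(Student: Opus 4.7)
The plan is as follows. The statement is an ``if and only if'' in which the forward direction is a direct application of Lemma~\ref{lem_K_equation} (at each point of the path) and the backward direction is a converse that amounts to a Liv\v{s}ic-type recovery of a coboundary from the vanishing of Birkhoff sums on periodic orbits.

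For the forward direction, fix $t_0\in(0,1)$. If the metric is degenerate along $\gamma$, then $\|\gamma'(t_0)\|_G=0$. Apply Lemma~\ref{lem_K_equation} with the base parameter $\lambda_0$ taken to be $\gamma(t_0)$, the path $\widetilde\gamma$ replaced by a local lift of the reparametrized curve $s\mapsto\gamma(t_0+s)$, and the tangent vector $\vec v$ equal to $\gamma'(t_0)$. The conclusion of that lemma, applied in particular at every repelling $n$-periodic point $x(\gamma(t_0))$ of $f_{\gamma(t_0)}$, is exactly the displayed equation, with the constant $K$ identified with $K(\gamma(t_0))$ as defined in the statement (up to the sign convention used there, which follows from the last line of the proof of Lemma~\ref{lem_K_equation}).

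For the backward direction, fix again $t_0\in(0,1)$ and write $g(s,x)\defeq -\delta_\eta(\gamma(t_0+s))\log|f'_{\gamma(t_0+s)}\circ\Psi_s(x)|$ on $J_{\gamma(t_0)}$, where $\Psi_s$ is the holomorphic-motion conjugacy. A direct differentiation, exactly as in the proof of Lemma~\ref{lem_K_equation}, shows that at any $n$-periodic point $x$ of $f_{\gamma(t_0)}$
\[
S_n\bigl(\dot g(0,\cdot)\bigr)(x)\;=\;\frac{d}{ds}\bigg|_{s=0}\Bigl[-\delta_\eta(\gamma(t_0+s))\,S_n\bigl(\log|f'_{\gamma(t_0+s)}\circ\Psi_s|\bigr)(x)\Bigr].
\]
The hypothesis, combined with the definition of $K(\gamma(t_0))$, gives that the right-hand side vanishes for every repelling $n$-periodic $x$. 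Parabolic periodic points contribute nothing either, since the parabolic relations defining $\Lambda$ force their multiplier to stay of modulus one along $\gamma$ (so both sides of the stated equation vanish identically in $s$). Hence $S_n(\dot g(0,\cdot))(x)=0$ on \emph{every} periodic orbit of $f_{\gamma(t_0)}$ in $J_{\gamma(t_0)}$. A Liv\v{s}ic-type argument then upgrades this to the statement that $\dot g(0,\cdot)$ is a $C^0$-coboundary on $J_{\gamma(t_0)}$: this is standard when all periodic orbits are uniformly expanding, and in our mildly non-hyperbolic setting it is the same argument as the $L^2$-to-$C^0$ upgrade in the proof of Lemma~\ref{lem_variance_cobdry}, based on the spectral gap of the transfer operator associated with $-\delta_\eta(\gamma(t_0))\log|f'_{\gamma(t_0)}|$ provided by~\cite{BD23eq1,BD24eq2}. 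Once $\dot g(0,\cdot)$ is a $C^0$-coboundary, Corollary~\ref{cor_G} gives $\|\gamma'(t_0)\|_G=0$, and since $t_0\in(0,1)$ was arbitrary, the metric is degenerate along $\gamma$.

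The only step that requires care is the Liv\v{s}ic step: the presence of parabolic cycles precludes a naive appeal to the classical Liv\v{s}ic theorem for uniformly hyperbolic systems. The obstacle is handled by the same mechanism already used in the paper (spectral gap of the perturbed transfer operator plus density of the repelling periodic orbits in $J$), so the argument reduces to a routine adaptation rather than a new ingredient.
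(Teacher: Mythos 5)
Your forward implication is exactly the paper's argument: the paper gives no separate proof of this lemma at all, asserting only that it ``follows immediately from Lemma \ref{lem_K_equation}'', and your recentering of the path at each $t_0$ and application of Lemma \ref{lem_K_equation} with $\lambda_0=\gamma(t_0)$, $\vec v=\gamma'(t_0)$ is precisely what is meant (your handling of the sign/notation in $K$ is also correct). Note also that in the sequel (Corollary \ref{c:formulas-deg} and Proposition \ref{p:non-deg-analytic-paths}) only this direction is used.

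For the converse, however, your argument has a genuine gap at the Liv\v{s}ic step. From the hypothesis you correctly get $S_n(\dot g(0,\cdot))(x)=0$ at repelling (and, as you observe, parabolic) periodic points, but to conclude $\|\gamma'(t_0)\|_G=0$ you need either ${\rm Var}(\dot g(0,\cdot),\nu)=0$ or that $\dot g(0,\cdot)$ is a coboundary, and this is \emph{not} ``the same argument as the $L^2$-to-$C^0$ upgrade in Lemma \ref{lem_variance_cobdry}'': that upgrade takes as input an $L^2(\nu)$-coboundary (equivalently ${\rm Var}=0$) and improves its regularity via the spectral gap, whereas here you must \emph{produce} the coboundary (or the vanishing of the variance) from periodic-orbit data. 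That is the content of a Liv\v{s}ic theorem, which in the uniformly hyperbolic case uses density/specification of a single orbit or closing lemmas, and in the present setting is obstructed by the parabolic cycles (the system is only non-uniformly expanding on $J$, and the closing/distortion estimates degenerate near the parabolic points); it is not a routine consequence of the spectral gap of $\mathcal L_{-\delta_\eta\log|f'|}$ plus density of repelling orbits, and you neither prove it nor cite a result covering this case. So as a proof of the stated ``if and only if'' your proposal is incomplete in the ``if'' direction — though, to be fair, the paper itself leaves that direction unjustified and never uses it, so the unconditional part of your argument reproduces everything the paper actually relies on.
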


The following corollary is an immediate consequence of the previous lemma.
\begin{cor}\label{c:formulas-deg}
Let
 $\gamma \colon (0,1)\to \Omega$ be a $C^1$ path along which the metric $||\cdot||_G$ degenerates. Then the following assertions hold:
\begin{enumerate}
\item
for every repelling $n$-periodic
point $x$ and $t_1,t_2\in (0,1)$, 
we have
\[
S_n \big(
\log |f'_{\gamma(t_2)} (x(\gamma((t_2))) |
\big)
=
S_n
\big(
\log  |f'_{\gamma(t_1)} (x(\gamma(t_1))) |
\big)
\cdot e^{\widetilde K(t_2, t_1)},
\]
where $\widetilde K(t_2, t_1)= \int_{t_1}^{t_2} K(\gamma(t))dt$;
\item for every pair of motions of 
repelling $n$-periodic 
points $x_i, x_j$, there exists a positive
constant $a_{i,j}$ such that
\[
S_n \big(
\log |f'_{\gamma(t)} (x_i (\gamma(t)))|\big)
=
{a_{i,j}}
S_n
\big(
\log |f'_{\gamma(t)} (x_j (\gamma(t)))|\big)
\]
for every $t$ in $(0,1)$.
\end{enumerate}
\end{cor}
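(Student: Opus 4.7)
The plan is to read the previous lemma not just as an identity at $t=0$ but as an ODE holding pointwise on $(0,1)$. Precisely, for each fixed motion $t\mapsto x(\gamma(t))$ of a repelling $n$-periodic point and each fixed base point $t_\star \in (0,1)$, I apply the previous lemma to the reparametrized path $s \mapsto \gamma(t_\star + s)$ (so that $s=0$ corresponds to $t=t_\star$). This yields
\[
\frac{d}{dt} S_n\bigl(\log|f'_{\gamma(t)}(x(\gamma(t)))|\bigr) = K(\gamma(t))\cdot S_n\bigl(\log|f'_{\gamma(t)}(x(\gamma(t)))|\bigr) \quad \text{for every } t\in(0,1),
\]
where $K(\gamma(t)) = \delta_\eta(\gamma(t))^{-1} \frac{d}{ds}\big|_{s=0}\delta_\eta(\gamma(t+s))$ is continuous in $t$ and, crucially, independent of both the periodic point $x$ and the period $n$. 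This is a linear first-order ODE in $t$, and integrating it between $t_1$ and $t_2$ gives
\[
S_n\bigl(\log|f'_{\gamma(t_2)}(x(\gamma(t_2)))|\bigr) = S_n\bigl(\log|f'_{\gamma(t_1)}(x(\gamma(t_1)))|\bigr)\cdot\exp\!\left(\int_{t_1}^{t_2} K(\gamma(t))\,dt\right),
\]
which is exactly assertion (1), with integrating factor $e^{\widetilde K(t_2,t_1)}$.

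For assertion (2), the key observation is that the exponential factor $e^{\widetilde K(t,t_0)}$ produced by (1) depends only on the path $\gamma$, not on the chosen periodic point. Fixing any base point $t_0\in(0,1)$ and two motions $x_i,x_j$ of repelling $n$-periodic points, I apply (1) to each of $x_i$ and $x_j$ with base $t_0$ and take the ratio; the common factor $e^{\widetilde K(t,t_0)}$ cancels, yielding
\[
\frac{S_n\bigl(\log|f'_{\gamma(t)}(x_i(\gamma(t)))|\bigr)}{S_n\bigl(\log|f'_{\gamma(t)}(x_j(\gamma(t)))|\bigr)} = \frac{S_n\bigl(\log|f'_{\gamma(t_0)}(x_i(\gamma(t_0)))|\bigr)}{S_n\bigl(\log|f'_{\gamma(t_0)}(x_j(\gamma(t_0)))|\bigr)} =: a_{i,j}
\]
for every $t\in(0,1)$. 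Positivity of $a_{i,j}$ is immediate: since $x_i$ and $x_j$ are repelling $n$-periodic points, $|(f^n_{\gamma(t_0)})'(x_\bullet(\gamma(t_0)))|>1$, so the Birkhoff sums $S_n(\log|f'_{\gamma(t_0)}(\cdot)|)=\log|(f^n_{\gamma(t_0)})'(\cdot)|$ appearing in the ratio are both strictly positive.

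There is no real obstacle here: as the text signals, the corollary is a direct consequence of the previous lemma, and the only point that deserves a sentence is the re-interpretation of the lemma as a pointwise ODE in $t$ by translating the parameter. Once that is in place, (1) is just the explicit solution of a scalar linear ODE, and (2) is the cancellation of its integrating factor between two solutions corresponding to distinct periodic motions.
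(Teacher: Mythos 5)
Your proposal is correct and matches the argument the paper intends: the paper omits the proof, calling the corollary an immediate consequence of the preceding lemma, and your reading of that lemma as a pointwise linear ODE in $t$ (via translating the base point), followed by integration for (1) and cancellation of the common factor $e^{\widetilde K(t,t_0)}$ for (2), is exactly the routine argument being invoked. Your remark on positivity of $a_{i,j}$ from repellingness is also the right justification.
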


We can now
prove Proposition \ref{p:non-deg-analytic-paths}.

\begin{proof}[Proof of Proposition \ref{p:non-deg-analytic-paths}]
Suppose by contradiction that we have
$\ell_G(\gamma)=0$. Then, 
 the metric must degenerate along $\gamma$. 
We denote by $\{x_i(\lambda)\}_{i\ge1}$ the set of maps parametrizing the repelling periodic points on $\Omega$, and let $n_i$ be the period
of the corresponding cycle.

\medskip

Fix $s_0\in (0,1)$ and an index $i_0$.
It follows from Corollary \ref{c:formulas-deg} that, for every $i\ge 1$
and every $s\in (0,1)$, we have
\begin{equation*}
S_{n_{i_0} n_i} \big(
\log |f'_{\gamma(s)} (x_i (\gamma(s)))|
\big)
= a_{i,i_0} e^{\widetilde K (s,s_0)} 
S_{n_{i_0} n_i}
\big(
\log |f'_{\gamma(s_0)} (x_{i_0} (\gamma(s_0)))|\big)
\end{equation*}
for some strictly
positive
(as both $x_i(\gamma(s))$ and $x_{i_0}(\gamma(s_0))$
are repelling)
constants $a_{i,i_0}$.

\medskip

As $\Omega$ is bounded,
we have $L(\gamma(s))\equiv \log D$ on $\Omega$ by Lemma \ref{lem_homology}.
Hence, it follows from 
the equidistribution of periodic points with respect to the measure of maximal entropy \cite{Lyubich82,Lyu83entropy}
that,
 for every $s \in (0,1)$,
 we  have
\begin{equation} \label{eq_tildeL}
\begin{aligned}
\log D
& =
\lim_{n\to \infty}
\frac{1}{n_{i_0} n} \frac{1}{D^{n_{i_0}n}}
\sum_{x_i\colon n_i =n_{i_0}n}
 a_{i,i_0} e^{\widetilde K (s,s_0)} 
 S_{n_{i_0} n} \big(
 \log |f'_{\gamma(s_0)} (x_{i_0} (\gamma(s_0)))|\big) \nonumber\\
& = e^{\widetilde K (s,s_0)} \cdot
  \lim_{n\to \infty}
 \left(
 \frac{1}{D^{n_{i_0}n}}
 \sum_{x_i\colon n_i=n}
a_{i,i_0} 
\cdot
\frac{1}{n_{i_0}n}
S_{n_{i_0}n} \big(
\log |f'_{\gamma(s_0)} (x_{i_0} (\gamma(s_0)))|\big)
\right)\\
& = e^{\widetilde K (s,s_0)} \cdot
 \left(
 \lim_{n\to \infty}
 \frac{1}{D^{n_{i_0}n}}
 \sum_{x_i\colon n_i=n}
a_{i,i_0} 
\right)
\cdot
\frac{1}{n_{i_0}}
S_{n_{i_0}} \big(
\log |f'_{\gamma(s_0)} (x_{i_0} (\gamma(s_0)))|\big),
\end{aligned}
\end{equation}
where in the last step we used the identity
\[S_{n_{i_0}n} \big(
\log |f'_{\gamma(s_0)} (x_{i_0} (\gamma(s_0)))|\big) =
n
S_{n_{i_0}} \big(
\log |f'_{\gamma(s_0)} (x_{i_0} (\gamma(s_0)))|\big).\]
We deduce that the function $\widetilde K(s,s_0)$ is independent of $s$.
By Corollary \ref{c:formulas-deg} (1), this shows that the absolute values of all the multipliers
of the $x_i (\gamma(s))$'s are constant along $\gamma$. 
However, this is impossible
as, by \cite[Theorem 8.25]{JiXie23}, there are only finitely many conjugacy classes of rational maps, not in the locus of (conjugacy classes of) flexible Latt\`es maps, having the same
set of absolute values of repelling multipliers. 
Therefore $\ell_G(\gamma) = 0$ and the proof is complete.
\end{proof}

\begin{rmk}
The conclusion of the proof of Proposition \ref{p:non-deg-analytic-paths} can be achieved also without making use of 
 \cite{JiXie23}. Indeed, assume as above that
  the absolute values of all the multipliers
of the $x_i (\gamma(s))$'s are constant along $\gamma$. 
For every $i$,
the absolute value of
the multiplier
of $x_i (\lambda)$ 
is an
harmonic 
function
on $\Omega$.  
This gives 
a family of harmonic functions with a non-trivial common level set
(possibly corresponding to a different real value, larger than 1, for each function)
which, by the above, must contain the image of $\gamma$.
Up to reparametrization, as $\Lambda$ is algebraic, we can also assume that $\Lambda=\mathbb C^\ell$, where $\ell ={\rm dim}_{\mathbb C} \Lambda$.
 It follows from 
 the maximum principle that this common level set 
 cannot be bounded in $\Lambda$
(as, otherwise, all the harmonic functions
 would be constant in the region bounded by the level set, hence constant there, hence on $\Lambda$, which contradicts the choice of $\Omega$ as before).
Hence, all of the repelling points stay repelling, with constant modulus of their multiplier, along some path going to infinity in $\Lambda=\mathbb C^\ell$.
This implies that, for every $\lambda $ belonging to
this path, we
have $L(\lambda)=\log D$. This contradicts the fact that, outside of a compact subset of $\Lambda$, all critical points escape to infinity, which gives $L(\lambda)>\log D$ by the Przyticki formula.
\end{rmk}

\subsection{Proof of Theorem \ref{thm_main}} \label{sec_Pf_Thm}
In this section we conclude the proof of our main theorem. 
We will use
the following theorem by Mityagin \cite{Mit15}, describing the zero set of a non-trivial
real analytic function.

\begin{thm}\label{t:mityagin}
Let $\mathcal{O} \subset \mathbb R^{l}$ 
be an open set and $\psi\colon \mathcal{O} \to \mathbb R$ an analytic function not identically vanishing. Then, the set $\{\psi=0\}$
is covered by a countable union of 
(not necessarily closed)
analytic submanifolds of $\mathcal{O}$.
\end{thm}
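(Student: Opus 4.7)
The plan is to prove Mityagin's theorem by a direct stratification of $Z \defeq \{\psi = 0\}$ according to the order of vanishing of $\psi$, combined with the analytic implicit function theorem. First I would reduce to the case where $\mathcal{O}$ is connected, by handling each connected component of $\mathcal{O}$ separately and taking a countable union at the end (this is harmless since $\mathbb R^\ell$ is second countable, so $\mathcal{O}$ has at most countably many components). Since $\psi$ is real-analytic and not identically zero on the connected open set $\mathcal{O}$, the identity theorem guarantees that at every $x \in \mathcal{O}$ there is a smallest integer $k(x) \geq 0$ such that some partial derivative of $\psi$ of order $k(x)$ is nonzero at $x$; for $x \in Z$ one has $k(x) \geq 1$.

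Next, I would stratify $Z = \bigsqcup_{k \geq 1} S^{(k)}$ where $S^{(k)} \defeq \{x \in Z : k(x) = k\}$, and show that each stratum is covered by countably many analytic hypersurfaces. Fix $x_0 \in S^{(k)}$, choose a multi-index $\alpha$ with $|\alpha| = k$ and $\partial^\alpha \psi(x_0) \neq 0$, and decompose $\alpha = \beta + e_j$ with $|\beta| = k-1$. Then $\partial^\beta \psi(x_0) = 0$ while $\partial_j(\partial^\beta \psi)(x_0) \neq 0$, so by the analytic implicit function theorem there is a neighbourhood $U = U(x_0)$ on which $H_\beta(x_0) \defeq \{x \in U : \partial^\beta \psi(x) = 0\}$ is an analytic hypersurface of $\mathcal{O}$. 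The key observation is that any point $y \in S^{(k)} \cap U$ has order of vanishing exactly $k$ by the very definition of the stratum, so in particular $\partial^\beta \psi(y) = 0$, which yields the inclusion $S^{(k)} \cap U \subseteq H_\beta(x_0)$.

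From these local inclusions, I would use the Lindel\"of property of the second-countable space $\mathcal{O}$ to extract, for each $k \geq 1$, a countable subcover of $S^{(k)}$ by hypersurfaces of the above form, and then take the union over $k \in \mathbb N$ to obtain the desired countable family of (not necessarily closed) analytic submanifolds covering $Z$. The main subtle point is precisely the inclusion $S^{(k)} \cap U \subseteq H_\beta(x_0)$, which would in general fail if one tried to cover the whole of $Z$ in one go: a point near $x_0$ could be a regular zero of $\psi$ (order of vanishing $1 < k$), in which case $\partial^\beta \psi$ need not vanish there. Stratifying by the \emph{exact} order of vanishing bypasses this obstacle, since within a single stratum the vanishing of all partial derivatives of order strictly less than $k$ is built into the definition, and the remaining ingredients (the analytic implicit function theorem and second countability) are routine.
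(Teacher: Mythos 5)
Your proof is correct and is essentially a self-contained write-up of the argument the paper outsources to Mityagin's constructive proof (as amplified in Kuchment's remark): stratify $\{\psi=0\}$ by the exact order of vanishing, apply the analytic implicit function theorem to a derivative of order $k-1$ to trap each stratum locally inside an analytic hypersurface, and use second countability to extract countably many such pieces. The only cosmetic caveat is your reduction to connected $\mathcal{O}$: on a component where $\psi$ vanishes identically the stratification is vacuous (such a component is itself an analytic submanifold, and in the paper's application $\Omega$ is connected, so nothing is lost).
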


In \cite{Mit15},
the author
only states that, under the assumptions of the theorem,
the set $\{\psi=0\}$ has
zero Lebesgue measure. However, the proof is constructive
and, as already noted in 
\cite[Remark 5.23]{Kuchment16}, gives a decomposition of this set as a countable union of smooth submanifolds {of real co-dimension at least 1}.
As each of these submanifolds 
(obtained by means 
of the implicit function theorem)
is in 
the common
zero locus of analytic functions
(namely $\psi$ and some of its derivatives),
one can see that
these sets are indeed analytic submanifolds.

\medskip

We now conclude the proof of our main result; that is, we need to show that the function $d_G: \Omega \times \Omega \to \mathbb R$ given by
\begin{equation*}
d_G(x,y) \defeq \inf_\gamma \int_0^1 \|\gamma'(t)\|_G dt
\end{equation*}
is a distance function. Recall that
the infimum is taken over all the $C^1$-paths $\gamma$ connecting $x$ to $y$ in $\Omega$.

\begin{proof}[Proof of Theorem \ref{thm_main}]
Let $\Lambda$ and $\Omega$ be as in the statement. Observe that $d_G$ is a pseudo-metric; namely, we have 
$d_G(x,x) = 0$
and $d_G(x,y)=d_G(y,x)$
for every $x,y\in \Omega$,
and $d_G$ satisfies the triangle inequality. We need to show $d_G(x,y)>0$ for $x \neq y \in \Omega$.

\medskip

By Corollary \ref{c:metric-analytic}, the pseudo-metric $d_G$ induced by the Hessian form is described by a collection of positive semi-definite bilinear forms $A^1_\lambda$ on $T_\lambda\Omega$,
depending analytically 
on the point $\lambda\in \Omega$. Denote by $\mathscr{D}_1(\lambda)$ the determinant 
of the matrix representing
$A^1_\lambda$. By Corollary \ref{c:metric-analytic}, $\mathscr{D}_1:\Omega \to \mathbb R$ is real-analytic.
It is clear that the pseudo-metric $d_G$
is indeed a metric outside of the zero locus $S_1$
of the analytic map $\mathscr{D}_1 : \Omega \to \mathbb R$.
Hence, it is enough to prove that $d_G(x,y)>0$ for every $x,y\in S_1$. 

\medskip

We first show 
that we cannot have $S_1=\Omega$. Observe that  $S_1=\Omega$ means that at every $\lambda\in\Omega$ there is at least one degenerate direction for the metric. For $j=1,\ldots,2\ell$ where $\ell\defeq {\rm dim}_{\mathbb C} \Omega$, set 
$$U_j \defeq \{\lambda \in \Omega : \exists \vec{v}_1,\ldots, \vec{v}_j \in T_\lambda\Omega \text{ linearly independent with } \|\vec{v}_1\|_G = \ldots = \|\vec{v}_j\|_G =0\}.$$ 
It is straightforward to see that $S_1 = U_1 \supseteq U_2 \supseteq \cdots \supseteq U_{2\ell}$. There are two cases to consider.

\medskip

{\bf Case 1:} 
$U_{j+1} \neq U_{j}$ for some $j\in \{1,\ldots,2\ell-1\}$.
Let $j^\star$ be the minimum $j$ satisfying this property. Then $U_{j^\star} \setminus U_{j^\star+1}$ contains an open set $\mathcal{A}$ of $\Omega$.
By the definition of $\mathcal{A}$ and the analyticity of the metric,
there exists a
$j^\star$-dimensional
subbundle $V\subset T\Omega$ on $\mathcal A$ such that,
for every $\lambda \in \mathcal{A}$,
$\|\cdot\|_G$ is degenerate on the fiber
$V_\lambda$.
Consider a $C^1$ path $\gamma\colon  (0,1)\to \mathcal A$
whose tangent $\gamma'(t)$
is contained in $V_{\gamma(t)}$ 
for every $t \in (0,1)$. By construction,
we have $\ell_G(\gamma)=0$, contradicting Proposition \ref{p:non-deg-analytic-paths}.

\medskip

{\bf Case 2:} If $U_{j+1} = U_{j}$ for all $j = 1,\ldots, 2\ell-1$, then $U_{2\ell} = U_1 = \Omega$, meaning that we have $\|\cdot\|_G \equiv 0$ on the tangent bundle $T\Omega$. Then any $C^1$-path in $\Omega$ has length 0, contradicting Proposition \ref{p:non-deg-analytic-paths}.  

\medskip

Therefore, 
as $S_1\neq \Omega$,
the function $\mathscr{D}_1: \Omega \to \mathbb R$ is not identically zero on $\Omega$ and, up to working locally,
we can apply Theorem \ref{t:mityagin} to $\mathscr{D}_1: \Omega \to \mathbb R$.
It follows that there is a countable collection $\{S_1^j\}_{j\ge1}$ of connected analytic submanifolds of $\Omega$ 
of real co-dimension at least 1
covering $S_1$. It is enough to show that
$d_G(x,y)>0$ for any two distinct points $x$ and $y$ belonging to the same 
real co-dimension one submanifold in the collection
 $\{S_1^j\}_{j\ge1}$, say $S_1^1$.

\medskip

If ${\rm dim}_{\mathbb R} \Omega=2$, the proof is complete by Proposition \ref{p:non-deg-analytic-paths}. Indeed,
as the submanifold $S_1^1$ is smooth and one-dimensional, it itself gives a smooth path joining $x$  and $y$ in $S_1^1$.
As the length of this path is strictly positive by Proposition \ref{p:non-deg-analytic-paths}, the proof in this case is complete.

\medskip

We now treat the general case where ${\rm dim}_{\mathbb R}\Omega > 2$. By the above argument, $S_1^1$ is an analytic submanifold, and we need to show that $d_G(x,y)>0$ for any pair of distinct points $x,y\in S_1^1$. Observe that any path between $x$ and $y$ not contained in $S_1^1$ must necessarily have a positive length. Hence, we can restrict ourselves to
paths with are contained in $S_1^1$, whose length
can be computed by considering the restrictions 
of the pseudo-metric represented by 
$A^1_\lambda$ to the 
(real) 
tangent spaces
$T_\lambda S_1^1$. 
Let $\mathscr{D}_2 (\lambda)$ be the determinant of the associated matrix of $A^2_\lambda$ on the tangent space $T_\lambda S_1^1$. Then $\mathscr{D}_2 : S_1^1 \to \mathbb R$ is an analytic function.
We argue as above that the zero locus $S_2$
of $\mathscr{D}_2 :S_1^1 \to \mathbb R$ is not equal to $S_1^1$, and is covered by
a countable collection $\{S_2^j\}_{j\ge1}$ of connected analytic submanifolds of $S_1^1$. As before, we can then assume that $x$ and $y$ belong to the same component $S_2^1$ of $S_2$, and that any continuous
path of trivial length joining $x$ and $y$ must be contained in $S_2^1$, which is an analytic submanifold of $\Omega$ of real dimension ${\rm dim}_{\mathbb R}\Omega-2$.

\medskip

Working by induction, we see that
any continuous path of trivial length between $x$ and $y$ must be contained in an analytic 
real one-dimensional
submanifold $S_{{\rm dim_{\mathbb R} \Omega -1}}^1$,  where $S_{j+1}^1$
is a component of the singular locus of the
restriction of the pseudo-metric 
to $S_j^1$. 
The conclusion
now follows from 
Proposition \ref{p:non-deg-analytic-paths}, as in the case where ${\rm dim}_{\mathbb R} \Omega=2$. The proof is complete.
\end{proof}

\printbibliography

\end{document}